\newtheorem{proposition}{Proposition}
\newtheorem{theorem}{Theorem}
\newtheorem{lemma}{Lemma}
\newtheorem{corollary}{Corollary}
\theoremstyle{definition}
\theoremstyle{remark}
\newtheorem{remark}{Remark}
\newtheorem{example}{Example}
\DeclareMathOperator{\Dom}{Dom}
\newcommand{\NN}{\mathbb{N}}
\newcommand{\RR}{\mathbb{R}}
\newcommand{\ZZ}{\mathbb{Z}}
\newcommand{\CC}{\mathbb{C}}
\newcommand{\calA}{\mathcal{A}}
\newcommand{\calV}{\mathcal{V}}
\newcommand{\calB}{\mathcal{B}}
\newcommand{\calF}{\mathcal{F}}
\newcommand{\calH}{\mathcal{H}}
\newcommand{\calX}{\mathcal{X}}
\newcommand{\Id}{\mathrm{Id}}
\renewcommand{\Re}{\textrm{Re}}
\newcommand{\sprod}[2]{\langle {#1}, {#2} \rangle}
\newcommand{\norm}[1]{\lVert {#1} \rVert}
\newcommand{\abs}[1]{\lvert {#1} \rvert}
\newcommand{\sym}[1]{\mathrm{Re} \left[ {#1} \right] }
\newcommand{\Imag}[1]{\mathrm{Im} \left[ {#1} \right]}
\newcommand{\sigmaP}[1]{\sigma_{\mathrm{p}}(#1)}
\newcommand{\lrparen}[1]{
  \left(\mkern-6mu #1 \mkern-6mu\right)}
\author{Grzegorz Świderski}
\email{gswider@math.uni.wroc.pl}
\address{
	Instytut Matematyczny\\
	Uniwersytet Wrocławski\\
	Pl. Grunwaldzki 2/4\\
	50-384 Wrocław\\
	Poland}
\title{Spectral properties of block Jacobi matrices}
\keywords{Block Jacobi matrix, asymptotics of generalised eigenvectors, total variation}
\subjclass[2010]{Primary: 47B25, 47B36, 42C05.}
\begin{document}
\selectlanguage{english}

\begin{abstract}
	We study the spectral properties of bounded and unbounded Jacobi matrices whose entries
	are bounded operators on a complex Hilbert space. In particular, we formulate conditions assuring 
	that the spectrum of the studied operators is continuous. Uniform asymptotics of generalised eigenvectors 
	and conditions implying complete indeterminacy are also provided.
\end{abstract}

\maketitle
   
\section{Introduction}
   	Let $\calH$ be a~complex Hilbert space. Consider two sequences $a = (a_n \colon n \geq 0)$ 
   	and $b = (b_n \colon n \geq 0)$ of bounded linear operators on $\calH$ such that for every 
   	$n \geq 0$ the operator $a_n$ has a~bounded inverse and $b_n$ is self-adjoint.
   	Then one defines the symmetric tridiagonal matrix by the formula\footnote{By $X^*$ we denote the 
   	adjoint operator to $X$.}
   	\begin{equation*}
    	  \calA =
    	  \begin{pmatrix}
    	     b_0 & a_0 & 0   & 0      &\ldots \\
    	     a_0^* & b_1 & a_1 & 0       & \ldots \\
    	     0   & a_1^* & b_2 & a_2     & \ldots \\
    	     0   & 0   & a_2^* & b_3   &  \\
    	     \vdots & \vdots & \vdots  &  & \ddots
    	  \end{pmatrix}.
   	\end{equation*}
   	The action of $\calA$ on \emph{any} sequence of elements from $\calH$ is defined by the formal matrix
   	multiplication. Let the operator $A$ be the minimal operator associated with $\calA$. 
   	Specifically, by $A$ we  mean the closure in $\ell^2(\NN; \calH)$ of the restriction of $\calA$ to the set of 
   	the sequences of finite support. Let us recall that
   	\[
    	  \sprod{x}{y}_{\ell^2(\NN; \calH)} = \sum_{n=0}^\infty \sprod{x_n}{y_n}_\calH, \quad \ell^2(\NN; \calH) = 
    	  \{ x \in \calH^\mathbb{N} \colon \sprod{x}{x}_{\ell^2(\NN; \calH)} < \infty \}.
   	\]
   	The operator $A$ is called a~\emph{block Jacobi matrix}. It is self-adjoint provided the Carleman condition 
   	is satisfied, i.e.
   	\begin{equation} \label{eq:27}
   		\sum_{n=0}^\infty \frac{1}{\norm{a_n}} = \infty,
   	\end{equation}
   	where $\norm{\cdot}$ is the operator norm (see \cite[Theorem VII-2.9]{Berezanski1968}).
   
   	Block Jacobi matrices are related to such topics as: matrix orthogonal polynomials (see \cite{Damanik2008}),
   	the matrix moment problem (see \cite{Duran2001a}), difference equations of finite order 
   	(see \cite{Duran1995}), partial difference equations (see \cite{Berezanski1968}), 
   	level dependent quasi-birth–death processes (see \cite{Dette2007} and references therein). 
   	For further applications we refer to \cite{Koelink2016, Sinap1996}.
	
	The theory of block Jacobi matrices is much less developed than the scalar ones, i.e. 
	corresponding to $\calH = \CC$. The aim of this paper is to provide extensions of results
	obtained in \cite{Swiderski2016, Swiderski2017} for $\calH = \RR$ to the case of arbitrary $\calH$.
	It is of interest as we provide new results even for $\calH = \CC^d$ with $d \geq 1$, i.e.
	the most common (apart from $\RR$) studied case.

	Originally, we were interested in the \emph{unbounded} case, i.e.
	\[
		\lim_{n \rightarrow \infty} \norm{a_n^{-1}} = 0.
	\]
	But it seems that even the bounded case is not well understood (see \cite{JanasNaboko2015, Sahbani2016}).
	Therefore, we present a~unified treatment of both bounded and unbounded cases. In the unbounded
	case the formulation of our results is simpler.

	In the proofs of the presenting theorems we will use the following notion. 
	A non-zero sequence $(u_n : n \geq 0)$ will be called a~\emph{generalised eigenvector} associated
	with $z \in \CC$ if it satisfies the recurrence relation
	\[
		a_{n-1}^* u_{n-1} + b_n u_n + a_n u_{n+1} = z u_n, \quad (n \geq 1).
	\]
	In Section~\ref{sec:genEig} we show the correspondence between asymptotic behaviour of generalised 
	eigenvectors and the spectral properties of $A$.

	The first main result of this article is Theorem~\ref{thm:A}, which generalises the results obtained in
	\cite{Swiderski2016} to the operator case. Its formulation involves an additional parameter sequence 
	$\alpha = (\alpha_n : n \geq 0)$. In Section~\ref{sec:specialCases} we present some of the possible choices 
	of $\alpha$. The following Theorem is a~special case of Theorem~\ref{thm:A} (obtained for $\alpha_n = a_n$).
	\begin{theorem} \label{thm:spec:2}
	Assume
	\[
		\lim_{n \rightarrow \infty} \norm{a_n^{-1}} = 0, \quad \lim_{n \rightarrow \infty} \norm{a_n^{-1} b_n} = 0
	\]
	and\footnote{For a~self-adjoint operator $X \in \calB(\calH)$ we define $X^-$ by the spectral theorem.}
	\begin{enumerate}[(a)]
		\item $\begin{aligned}[b]
			\sum_{n=1}^\infty \frac{\norm{[a_{n+1} a_{n+1}^* - a_n^* a_n]^-}}{\norm{a_n}^2} < \infty,
		\end{aligned}$ \label{thm:spec:2:eq:1}
		
		\item $\begin{aligned}[b]
			\sum_{n=1}^\infty \frac{\norm{a_n b_{n+1} - b_n a_n}}{\norm{a_n}^2} < \infty.
		\end{aligned}$ \label{thm:spec:2:eq:2}
		
		\item $\begin{aligned}[b]
			\sum_{n=0}^\infty \frac{1}{\norm{a_n}^2} = \infty.
		\end{aligned}$ \label{thm:spec:2:eq:3}
	\end{enumerate}
	Then the operator $A$ is self-adjoint. Moreover\footnote{By $\sigma(A)$ we denote spectrum of the operator 
	$A$, whereas $\sigmaP{A}$ is the set of its eigenvalues.}, $\sigma(A) = \RR$ and 
	$\sigmaP{A} = \emptyset$ provided
	\[
		\lim_{n \rightarrow \infty} \left\| \frac{a_n}{\norm{a_n}} - C \right\| = 0,
	\]
	where $C$ is invertible.
	\end{theorem}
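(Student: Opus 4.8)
The plan is to obtain this statement as the instance $\alpha_n = a_n$ of the general Theorem~\ref{thm:A}, by checking that hypotheses (a)--(c) together with the limit condition on $a_n/\norm{a_n}$ translate into the hypotheses of that theorem; I sketch the ingredients below. First I would dispose of self-adjointness. Since $\norm{a_n}\,\norm{a_n^{-1}} \geq \norm{a_n a_n^{-1}} = 1$, the assumption $\norm{a_n^{-1}} \to 0$ forces $\norm{a_n} \to \infty$. Hence $\norm{a_n}^{-1} \geq \norm{a_n}^{-2}$ for all large $n$, and (c) gives $\sum_n \norm{a_n}^{-1} = \infty$, which is the Carleman condition \eqref{eq:27}; thus $A$ is self-adjoint and $\sigma(A) \subseteq \RR$.

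For the spectral conclusions I would pass to generalised eigenvectors. Writing the recurrence as $u_{n+1} = a_n^{-1}(z\Id - b_n)u_n - a_n^{-1}a_{n-1}^* u_{n-1}$ and grouping $(u_n, u_{n+1})$ into a vector in $\calH \oplus \calH$ produces a transfer matrix $T_n(z)$. Because $\norm{a_n^{-1}} \to 0$ and $\norm{a_n^{-1}b_n} \to 0$, the block $a_n^{-1}(z\Id - b_n)$ is asymptotically negligible, so the $z$- and $b_n$-dependence disappears in the limit and the leading behaviour is dictated by $a_n^{-1}a_{n-1}^*$. The role of (a) and (b) is that they are the operator analogues of bounded variation: $[a_{n+1}a_{n+1}^* - a_n^* a_n]^-$ measures the failure of one-sided monotonicity of the moduli of the $a_n$, while $a_n b_{n+1} - b_n a_n$ is a weighted non-commutative increment of the potential. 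Summability of their norms relative to $\norm{a_n}^2$ is exactly what forces the appropriately normalised transfer matrices to have summable variation, which is the hypothesis of an operator-valued discrete Levinson-type theorem and the content of Theorem~\ref{thm:A}.

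The hard part, and the reason the auxiliary parameter $\alpha$ is introduced, is to choose coordinates in which the limiting dynamics is norm-preserving. A naive rescaling by $\norm{a_n}$ does not suffice once $C$ is merely invertible rather than normal, since $a_n^{-1}a_{n-1}^*$ then need not be asymptotically unitary; the choice $\alpha_n = a_n$ supplies the correct normalisation. Granting this, the invertibility of $C$ guarantees that for every real $z$ the generalised eigenvectors are neither contracted nor expanded, so their norms stay bounded above and below. Consequently no nonzero generalised eigenvector lies in $\ell^2(\NN; \calH)$, whence $\sigmaP{A} = \emptyset$; and the absence of a subordinate solution at every point of $\RR$ yields, through the correspondence established in Section~\ref{sec:genEig}, that the spectrum is purely absolutely continuous and fills the whole line, i.e. $\sigma(A) = \RR$.

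I expect the main obstacle to be exactly this non-commutativity. In the scalar setting the transfer matrices commute, $C$ is automatically a unimodular number, and the classical Levinson theorem applies directly; here one must control genuinely operator-theoretic error terms using only (a) and (b), and carry out the asymptotic diagonalisation uniformly in the spectral variable of $C^* C$, which is delicate when $\calH$ is infinite-dimensional. This uniform, $\alpha$-adapted analysis is precisely what Theorem~\ref{thm:A} is built to provide, so the remaining work is the bookkeeping of verifying its hypotheses for $\alpha_n = a_n$.
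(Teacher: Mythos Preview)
Your overall plan---reduce to Theorem~\ref{thm:A} with $\alpha_n = a_n$---is exactly the paper's approach, and your Carleman argument for self-adjointness is fine. Two points, however.

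First, you overclaim at the end: the statement asserts only $\sigma(A)=\RR$ and $\sigmaP{A}=\emptyset$, not absolute continuity. There is no subordination theory available for block Jacobi matrices (the paper says so explicitly after Theorem~\ref{thm:B}), so ``absence of a subordinate solution'' is not an argument you can invoke here. What Theorem~\ref{thm:A} actually delivers is $\sigmaP{A^*}\cap\Lambda=\emptyset$ and $\sigma(A^*)\supset\overline{\Lambda}$; combined with self-adjointness and $\Lambda=\RR$ this gives precisely the stated conclusion, nothing more.

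Second, you dramatically overestimate the difficulty of the ``bookkeeping''. With $\alpha_n=a_n$ one has $a_{n-1}^{-1}\alpha_{n-1}a_n - \alpha_n = a_n - a_n = 0$, so hypothesis~(b) of Theorem~\ref{thm:A} is trivially zero; hypotheses (a), (c), (d) there reduce \emph{verbatim} to (a), (b), (c) here because $a_{n-1}^{-1}\alpha_{n-1}a_n=a_n$ and $\norm{\alpha_n a_n^*}=\norm{a_n}^2$. No Levinson theorem, no asymptotic diagonalisation, and no uniformity in the spectrum of $C^*C$ is needed. The only remaining check is the limit $C(\lambda)$: since the off-diagonal entry $-(\lambda\Id-b_n)a_n/\norm{a_n}^2$ tends to $0$ (use $\norm{a_n}\to\infty$ and $\norm{b_n}/\norm{a_n}\le\norm{a_n^{-1}b_n}\to 0$), one gets
\[
C(\lambda)=\begin{pmatrix} CC^* & 0 \\ 0 & C^*C \end{pmatrix},
\]
which is strictly positive for every $\lambda\in\RR$ because $C$ is invertible. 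Hence $\Lambda=\RR$ and Theorem~\ref{thm:A} finishes the proof.
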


	Before we formulate the next result we need a~definition. Given a~positive integer $N$, we define the 
	total $N$-variation $\calV_N$ of a~sequence of vectors $x = \big(x_n : n \geq 0 \big)$ from a~vector 
	space~$V$ by
	\[
		\calV_N(x) = \sum_{n = 0}^\infty \norm{x_{n + N} - x_n}.
	\]
	Observe that if $(x_n : n \geq 0)$ has a finite total $N$-variation then for each 
	$j \in \{0, \ldots, N-1\}$ a~subsequence $(x_{k N + j} : k \geq 0)$ is a~Cauchy sequence. 

   	The following Theorem is interesting even for $N=1$. Since recently block periodic Jacobi matrices have
   	obtained some attention (see \cite{Damanik2015, JanasNaboko2015}) we formulate it for an arbitrary 
   	natural number $N$.
   	\begin{theorem} \label{thm:B}
   		Let $N \geq 1$ be an integer. Assume
   		\[
   			\calV_N (a_n^{-1} : n \geq 0) + \calV_N (a_n^{-1} b_n : n \geq 0) + 
	   		\calV_N (a_n^{-1} a_{n-1}^* : n \geq 1) < \infty.
   		\]
   		Let
   		\begin{enumerate}[(a)]
			\item $\begin{aligned}[b]
				\lim_{n \rightarrow \infty} \norm{a_n^{-1} - T_n} = 0,
			\end{aligned}$ \label{thm:B:eq:1}
			\item $\begin{aligned}[b]
				\lim_{n \rightarrow \infty} \norm{a_n^{-1} b_n - Q_n} = 0,
			\end{aligned}$ \label{thm:B:eq:2}
			\item $\begin{aligned}[b]
				\lim_{n \rightarrow \infty} \norm{a_n^{-1} a_{n-1}^* - R_n} = 0,
			\end{aligned}$ \label{thm:B:eq:3}
			\item $\begin{aligned}[b]
				\lim_{n \rightarrow \infty} \left\| \frac{a_n}{\norm{a_n}} - C_n \right\| = 0
			\end{aligned}$ \label{thm:B:eq:4}
	 	\end{enumerate}
	 	for $N$-periodic sequences $(T_n: n \geq 0)$, $(Q_n : n \geq 0)$, $(R_n : n \geq 0)$ and
	 	$(C_n : n \geq 0)$ with $C_n$ invertible. Let $\Lambda$ be the set of $\lambda \in \RR$ such 
	 	that\footnote{   The real part of the operator $X$ is defined by $\sym{X} = \frac{1}{2} (X + X^*)$.}
		 \[
			\calF(\lambda) =  
		 	\sym{
			 	\begin{pmatrix}
			 	0 & -C_{N-1} \\
			 	C_{N-1}^* & 0
			 	\end{pmatrix}
			 	\prod_{i=0}^{N-1}
			 	\begin{pmatrix}
			 		0 & \Id \\
			 		-R_i & \lambda T_i - Q_i
			 	\end{pmatrix}
		 	}
		 \]
		 is a strictly positive or a strictly negative operator on $\calH \oplus \calH$.
		 Then for every compact set $K \subset \Lambda$ there are positive constants $c_1, c_2$ such that 
		 for every generalised eigenvector associated with $\lambda \in K$ and every $n \geq 1$
		 \begin{equation} \label{eq:26}
	 		c_1 (\norm{u_0}^2 + \norm{u_1}^2) \leq \norm{a_n} (\norm{u_{n-1}}^2 + \norm{u_n}^2) 
	 		\leq c_2 (\norm{u_0}^2 + \norm{u_1}^2).
	 	\end{equation}
   	\end{theorem}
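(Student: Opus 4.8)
The plan is to recast the three-term recurrence as a first-order system and to analyse it through a $\norm{a_n}$-weighted quadratic form whose leading part is governed by $\calF(\lambda)$.

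First I would fix a real $\lambda$ and a generalised eigenvector $u$, solve the recurrence for $u_{n+1}$ to obtain
\[
	u_{n+1} = (\lambda a_n^{-1} - a_n^{-1} b_n) u_n - a_n^{-1} a_{n-1}^* u_{n-1},
\]
and pass to the transfer-matrix formulation $X_{n+1} = \calX_n(\lambda) X_n$ with $X_n = \begin{pmatrix} u_{n-1} \\ u_n \end{pmatrix}$ and
\[
	\calX_n(\lambda) = \begin{pmatrix} 0 & \Id \\ -a_n^{-1} a_{n-1}^* & \lambda a_n^{-1} - a_n^{-1} b_n \end{pmatrix}.
\]
By (\ref{thm:B:eq:1})--(\ref{thm:B:eq:3}) one has $\calX_n(\lambda) \to \calB_n(\lambda) := \begin{pmatrix} 0 & \Id \\ -R_n & \lambda T_n - Q_n \end{pmatrix}$, locally uniformly in $\lambda$, and the finite $N$-variation hypothesis makes the $N$-step transfer matrices of bounded variation along each residue class $\bmod\, N$. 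I would also record, from (\ref{thm:B:eq:1}) and (\ref{thm:B:eq:4}), that $\norm{a_n}$ converges along each residue class, so that $\norm{a_{n+N}}/\norm{a_n} \to 1$ while $\norm{a_n}/\norm{a_{n-1}}$ stays between two positive constants; hence $\norm{a_n}$ and $\norm{a_{n-1}}$ are interchangeable up to uniform factors, and the one-step matrices $\calX_n(\lambda)$ together with their inverses are uniformly bounded on a compact $K$.

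The heart of the argument is a Lyapunov functional. Writing $\calX^{(N)}_n(\lambda) = \calX_{n+N-1}(\lambda) \cdots \calX_n(\lambda)$ and $\calQ_n = \begin{pmatrix} 0 & -C_n \\ C_n^* & 0 \end{pmatrix}$, I set
\[
	E_n(\lambda) = \sym{\sprod{X_n}{\calQ_{n+N-1}\, \calX^{(N)}_n(\lambda)\, X_n}},
\]
a real Wronskian-type pairing of $X_n$ with $X_{n+N} = \calX^{(N)}_n(\lambda) X_n$. Since $\calX^{(N)}_n(\lambda) \to \prod_{i=0}^{N-1} \calB_i(\lambda)$ (up to the cyclic relabelling dictated by the residue of $n$) and $C_{n+N-1} \to C_{N-1}$, the operator $\sym{\calQ_{n+N-1} \calX^{(N)}_n(\lambda)}$ tends to $\calF(\lambda)$ (respectively to its cyclic conjugates). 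Strict definiteness of $\calF(\lambda)$ is an open, continuous condition in $\lambda$, so on the compact $K$ it is uniform: there is $c > 0$ with $\calF(\lambda) \geq c\,\Id$ or $\calF(\lambda) \leq -c\,\Id$ for all $\lambda \in K$, and the same bound propagates to every residue class through the uniformly bounded, invertible one-step matrices. Consequently $\abs{E_n(\lambda)}$ is comparable to $\norm{X_n}^2 = \norm{u_{n-1}}^2 + \norm{u_n}^2$ for all large $n$ and all $\lambda \in K$, of a fixed sign. The object I actually track is $\Phi_n(\lambda) = \norm{a_n}\, E_n(\lambda)$, which by the above is comparable to the quantity in (\ref{eq:26}).

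It then remains to prove that $\Phi_n$ is slowly varying, i.e. $\sum_n \abs{\Phi_{n+N}(\lambda) - \Phi_n(\lambda)} < \infty$ uniformly for $\lambda \in K$. Expanding the increment in terms of $\calX^{(N)}_{n+N}(\lambda) - \calX^{(N)}_n(\lambda)$, of $C_{n+N} - C_n$, and of $\norm{a_{n+N}} - \norm{a_n}$, and bounding each factor by the uniformly convergent quantities from (\ref{thm:B:eq:1})--(\ref{thm:B:eq:4}), one dominates the sum by a finite combination of $\calV_N(a_n^{-1} : n \geq 0)$, $\calV_N(a_n^{-1} b_n : n \geq 0)$ and $\calV_N(a_n^{-1} a_{n-1}^* : n \geq 1)$ — precisely the finite quantities assumed. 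Bounded variation then forces $\Phi_n(\lambda)$ to stay between two positive multiples of its value at some fixed large index $n_0$. For the initial segment $1 \leq n \leq n_0$ I would instead use the uniformly bounded, invertible one-step matrices to get $\norm{X_n}^2 \asymp \norm{u_0}^2 + \norm{u_1}^2$ together with $\norm{a_n}$ bounded above and below, so that $\Phi_{n_0}(\lambda)$ is itself comparable to $\norm{u_0}^2 + \norm{u_1}^2$; choosing all constants uniformly over $K$ yields (\ref{eq:26}) for every $n \geq 1$.

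The main obstacle is the non-commutative nature of the problem. Unlike the scalar case there is no discriminant to diagonalise the monodromy, and the role of the elliptic condition is played entirely by the definiteness of $\calF(\lambda)$; one must check that this definiteness both propagates to all residue classes and survives the subleading perturbations. The genuinely delicate point is the bookkeeping of the weight $\norm{a_n}$: the limiting periodic transfer is norm-neutral, so the decay $\norm{X_n} \sim \norm{a_n}^{-1/2}$ is produced solely by the summable corrections, and the weight $\norm{a_n}$ in $\Phi_n$ must cancel this decay exactly. Making the slow-variation estimate uniform in $\lambda \in K$ while carrying the operator-valued $\norm{a_n}$-scaling through the $N$-step product is the technical crux.
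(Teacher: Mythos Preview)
Your overall strategy is the right one and matches the paper's: build a $\norm{a_n}$-weighted quadratic Lyapunov functional out of the $N$-step transfer, use the definiteness of $\calF(\lambda)$ to make it comparable to $\norm{a_n}(\norm{u_{n-1}}^2+\norm{u_n}^2)$, and then show it has bounded variation. The gap is in the slow-variation step, and it stems from an earlier false claim.

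You assert that ``$\norm{a_n}$ converges along each residue class, so that $\norm{a_{n+N}}/\norm{a_n}\to 1$''. Neither statement follows from the hypotheses: condition~(a) only gives $a_n^{-1}\to T_n$, and if $T_n=0$ (the unbounded case the theorem is designed for) then $\norm{a_n}\to\infty$. In fact $\norm{a_{n+N}}/\norm{a_n}$ converges to $\prod_{j}r_j$ with $r_j=\norm{C_j^{-1}C_{j-1}^*R_j^{-1}}$ (the paper's Proposition~\ref{prop:6}), and this product need not be~$1$: already $a_n=2^n$, $b_n=0$, $N=1$ gives $\rho=2$. Consequently your naive decomposition of $\Phi_{n+N}-\Phi_n$ into pieces coming from $\calX^{(N)}_{n+N}-\calX^{(N)}_n$, from $C_{n+N}-C_n$, and from $\norm{a_{n+N}}-\norm{a_n}$ fails: after dividing by $|\Phi_n|\asymp\norm{a_n}\norm{X_n}^2$ the weight contribution is $|\norm{a_{n+N}}/\norm{a_n}-1|$, which for $a_n=n$ equals $N/n$ and is \emph{not} summable. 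There is genuine cancellation here that your decomposition does not see.

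What makes the cancellation visible is an exact symplectic-type identity for the \emph{actual} coefficients, not their limits: with $A_m=\mathrm{diag}(a_m,a_m^*)$ and $E=\begin{pmatrix}0&-\Id\\ \Id&0\end{pmatrix}$ one has $A_mEB_m(\lambda)=(B_m^{-1}(\lambda))^*A_{m-1}E$ (Lemma~\ref{lem:1}). The paper therefore builds the form with $a_{n+N-1}$ itself rather than with the limit $C_{n+N-1}$, and crucially steps by~$1$ rather than by~$N$: writing $S_n=\big\langle \sym{A_{n+N-1}EX_n(\lambda)}\,X_n,X_n\big\rangle$, the identity makes the leading parts of $S_{n+1}$ and $S_n$ match exactly, and the remainder is a concrete $2\times 2$ block whose entries are precisely $a_{n+N}^{-1}a_{n+N-1}^*-a_n^{-1}a_{n-1}^*$, $a_{n+N}^{-1}-a_n^{-1}$ and $a_{n+N}^{-1}b_{n+N}-a_n^{-1}b_n$ (Lemma~\ref{lem:3}). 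Those are the assumed $N$-variations, so $\sum|S_{n+1}/S_n-1|<\infty$ follows. Replacing $a_{n+N-1}$ by $C_{n+N-1}$ destroys the exact identity, and stepping by $N$ forces you to conjugate by the full $N$-product, which reintroduces the non-summable weight ratio. Your proposal correctly identifies that ``carrying the $\norm{a_n}$-scaling through the $N$-step product is the technical crux'', but the mechanism you propose for it does not work; the missing ingredient is the one-step symplectic identity and the choice of form that makes it exact.
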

	When the Carleman condition is satisfied, the asymptotics \eqref{eq:26} implies the similar conclusion as 
	Theorem~\ref{thm:spec:2}, i.e. $\sigmaP{A} \cap \Lambda = \emptyset$ and 
	$\sigma(A) \supset \overline{\Lambda}$. In the scalar case the subordination theory 
	(see, e.g., \cite{Clark1996}) implies that in fact the spectrum of $A$ is purely absolutely continuous 
	on $\Lambda$. Unfortunately, a~subordination theory for the non-scalar case has not been formulated (but there 
	is some progress, see \cite{CarvalhoMarchetti2010}). We expect that in our case the spectrum of $A$
	is, similarly to the scalar case, purely absolutely continuous of the maximal multiplicity on $\Lambda$.
	
	It is also of interest to obtain a~characterization when the symmetric operator $A$ is \emph{not}
	self-adjoint (see, e.g., \cite{Duran2000, Zagorodnyuk2017}). The following Theorem shows that in 
	the setting of Theorem~\ref{thm:B} the Carleman condition is also necessary to the self-adjointness of $A$.
	\begin{theorem} \label{thm:C}
		Let the assumptions of Theorem~\ref{thm:B} be satisfied with $\Lambda \neq \emptyset$. 
		If \eqref{eq:27} is \emph{not} satisfied, then the conclusion of Theorem~\ref{thm:B} holds for 
		$\Lambda = \CC$. Consequently, for every $z \in \CC$
		\[
			\ker[A^* - z\Id] \simeq \calH.
		\]
		Hence, we have the so-called \emph{complete indeterminate} case. In particular, the symmetric operator 
		$A$ is not self-adjoint but it has self-adjoint extensions.
	\end{theorem}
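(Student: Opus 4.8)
The plan is to exploit the fact that the failure of the Carleman condition is only compatible with the \emph{unbounded} regime, and that in this regime the limiting transfer matrices do not depend on the spectral parameter. First I would observe that $\sum_n 1/\norm{a_n} < \infty$ forces $\norm{a_n} \to \infty$, and hence, writing $a_n^{-1} = \norm{a_n}^{-1} (a_n/\norm{a_n})^{-1}$ and using \eqref{thm:B:eq:4}, that $a_n^{-1} \to 0$; consequently the $N$-periodic limit $T_n$ from \eqref{thm:B:eq:1} vanishes identically. Substituting $T_n = 0$ into the definition of $\calF$ removes the parameter $\lambda$ entirely, so $\calF(\lambda)$ is a single $\lambda$-independent operator. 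Since $\Lambda \neq \emptyset$ by hypothesis, this operator is strictly positive or strictly negative, and therefore the condition defining $\Lambda$ is satisfied by \emph{every} $\lambda$. Moreover, inverting the convergence in \eqref{thm:B:eq:4} gives $\norm{a_n}\,\norm{a_n^{-1}} \to \norm{C_n^{-1}}$, an $N$-periodic positive quantity, so $\norm{a_n^{-1}}$ is comparable to $1/\norm{a_n}$; in particular the failure of \eqref{eq:27} is equivalent to $\sum_n \norm{a_n^{-1}} < \infty$.

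Next I would upgrade the estimate \eqref{eq:26} from real $\lambda$ to arbitrary $z \in \CC$. The only place the parameter enters the one-step transfer matrix $\begin{pmatrix} 0 & \Id \\ -a_n^{-1} a_{n-1}^* & a_n^{-1}(z - b_n) \end{pmatrix}$ is through the summand $z\,a_n^{-1}$ in the lower-right corner, so replacing a real $\lambda$ by $z$ perturbs the transfer matrix by $(z - \lambda) \begin{pmatrix} 0 & 0 \\ 0 & a_n^{-1} \end{pmatrix}$. By the previous step this perturbation has finite $\ell^1$ norm in $n$, uniformly for $z$ in any compact set. Since the proof of Theorem~\ref{thm:B} derives \eqref{eq:26} from the definiteness of $\calF$ together with summable control of the deviation of the rescaled transfer matrices from their periodic limit, and since for $T_n = 0$ that limit is already $z$-independent, the very same argument applies with $\lambda$ replaced by any $z \in \CC$, the extra summable term being absorbed into the error. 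This proves that the conclusion of Theorem~\ref{thm:B} holds with $\Lambda = \CC$, the constants $c_1, c_2$ being uniform on compact subsets of $\CC$.

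Finally I would read off the consequences. From the upper bound in \eqref{eq:26} and $\sum_n 1/\norm{a_n} < \infty$ we obtain $\sum_n \norm{u_n}^2 \leq c_2 (\norm{u_0}^2 + \norm{u_1}^2) \sum_n 1/\norm{a_n} < \infty$, so every generalised eigenvector lies in $\ell^2(\NN; \calH)$. Now $A^*$ acts by formal matrix multiplication on its domain, so $u \in \ker[A^* - z\Id]$ precisely when $u \in \ell^2(\NN; \calH)$ solves the generalised eigenvector recurrence for $n \geq 1$ together with the boundary row $b_0 u_0 + a_0 u_1 = z u_0$. The latter determines $u_1 = a_0^{-1}(z - b_0) u_0$ from $u_0$, and the recurrence then determines all $u_n$ using invertibility of the $a_n$; conversely every such $u$ is automatically square summable by the above. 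Hence $u_0 \mapsto u$ is a linear bijection $\calH \to \ker[A^* - z\Id]$, bounded with bounded inverse by \eqref{eq:26}, which gives $\ker[A^* - z\Id] \simeq \calH$ for every $z \in \CC$. In particular the deficiency indices both equal $\dim \calH$, so $A$ is symmetric but not self-adjoint while admitting self-adjoint extensions, and we are in the completely indeterminate case.

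The main obstacle I expect is the second step: one must verify that the proof of Theorem~\ref{thm:B} genuinely goes through for complex $z$, i.e. that it uses the real parameter only through the limiting object $\calF$ and through deviations that remain summable after the substitution $\lambda \mapsto z$. Once it is granted that the real-parameter argument rests solely on the definiteness of $\calF$ (here $\lambda$-independent because $T_n = 0$) and on $\ell^1$-control of the transfer-matrix deviations (here preserved because $\sum_n \norm{a_n^{-1}} < \infty$), the remaining steps are routine bookkeeping.
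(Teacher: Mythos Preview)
Your proposal is correct and follows essentially the same route as the paper: from the failure of \eqref{eq:27} together with assumption \eqref{thm:B:eq:4} you deduce $T_n\equiv 0$ and $\sum_n\norm{a_n^{-1}}<\infty$, conclude that $\calF$ is parameter-independent and hence definite everywhere, and then invoke the machinery behind Theorem~\ref{thm:B} in its complex-parameter form before finishing with the deficiency-space argument (the paper's Corollary~\ref{cor:4}). The only cosmetic difference is that you phrase the passage from real $\lambda$ to complex $z$ as an $\ell^1$ perturbation of the transfer matrices, whereas the paper has this built directly into its Tur\'an-determinant estimate (Lemma~\ref{lem:3} produces an explicit $|z-\overline{z}|\,\norm{a_{n+N}^{-1}}$ term, and Theorem~\ref{thm:2} carries the extra hypothesis $\sum_n\norm{a_n^{-1}}<\infty$ precisely to absorb it); these are two descriptions of the same mechanism.
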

	
	The estimate implied by Theorem~\ref{thm:C} is useful even in the scalar case (see \cite{BergSzwarc2014}).
	
	The method of the proofs of the presented theorems is based on an extension of the techniques used in
	\cite{Swiderski2016} and \cite{Swiderski2017}. In these articles one examines the positivity or 
	the convergence of sequences of quadratic forms on $\RR^2$ acting on the vector of two consecutive values of 
	a generalised eigenvector~$u$ associated with $\lambda \in \Lambda \subset \RR$, i.e.
	\[
		S_n = \left\langle X_n(\lambda) 
		\begin{pmatrix}
			u_{n-1} \\
			u_n
		\end{pmatrix},
		\begin{pmatrix}
			u_{n-1} \\
			u_n
		\end{pmatrix}
		\right\rangle_{\RR^2},
	\]
	for a~suitably chosen sequence $(X_n(\lambda) : n \geq 0)$, $X_n(\lambda) \in \calB(\RR^2)$. 
	In trying to extend this method one encounters several difficulties.
	
	First of all, what is the right quadratic form for the operator case? One real number should 
	control the norm of generalised eigenvectors, which unlike the scalar case, need not to be real. Moreover,
	the convergence (or at least positivity) should be easily expressible in terms of the recurrence relation.
	What additionally complicates the matter is the fact that in general the parameters $(a_n : n \geq 0)$
	and $(b_n : \geq 0)$, unlike the scalars, are not commuting with each other. 
	The second one need not to be even symmetric. Moreover, because of the fact that the Hilbert space $\calH$ can 
	be arbitrary, we  cannot assume that it is locally compact. This complicates the analysis of the proposed
	quadratic forms.
	
	The second issue concerns the problem how one can express quantitatively the rate of divergence 
	or deviation from the positivity of the parameters. As simple examples of diagonal $a_n$ and $b_n$ show, 
	the divergence of the norms is too coarse. The scaling from Theorem~\ref{thm:B}\eqref{thm:B:eq:4} 
	seems to be a~natural one. However, there are also different possibilities known in the literature 
	(see \cite{Duran2001}).
	
	The article is organized as follows. In Section~\ref{sec:prelim} we present basic notions needed in 
	the rest of the article. In Section~\ref{sec:genEig} we define generalised eigenvectors and prove
	the correspondence of their asymptotic behaviour with the spectral properties of $A$. 
	In Section~\ref{sec:commutatorApproach} we prove Theorem~\ref{thm:A}. Next, in Section~\ref{sec:specialCases}, 
	we present its special cases. In particular, the choice of the parameter sequence $\alpha_n \equiv \Id$ 
	motivates us to define  the notion of $N$-shifted Turán determinants in Section~\ref{sec:TuranDet}. 
	Section~\ref{sec:TuranDet} is devoted to the proof of Theorems \ref{thm:B} and \ref{thm:C}. In 
	Section~\ref{sec:exactAsym} we present the situations when one can compute \emph{exact} asymptotics of $u$. 
	In the scalar case it has applications to the so-called Christoffel functions. Finally, in 
	Section~\ref{sec:examples} we present some examples illustrating the sharpness of the assumptions.
	
\section{Preliminaries} \label{sec:prelim}
In this section we collect some basic notations and properties, which will be needed in the sequel.
\subsection{Operators}
	On the space of bounded operators we consider only the norm topology. In particular, a~sequence 
	$(X_n : n \geq 0)$ converges to $X$ provided
	\[
		\lim_{n \rightarrow \infty} \norm{X_n - X} = 0,
	\]
	where $\norm{\cdot}$ is the operator norm.
	
	For a~sequence of operators $(X_n : n \in \NN)$ and $n_0, n_1 \in \NN$ we set
	\[
      \prod_{k=n_0}^{n_1} X_k = 
       \begin{cases} 
       X_{n_1} X_{n_1 - 1} \cdots X_{n_0} & n_1 \geq n_0, \\
       \Id & \text{otherwise.}
       \end{cases}
	\]
	
	For any bounded operator $X$ we define its \emph{real part} by
	\[
		\sym{X} = \frac{1}{2} (X + X^*).
	\]
	Direct computation shows that for any bounded operator $Y$ one has
	\begin{equation} \label{eq:9}
		Y^* \sym{X} Y = \sym{Y^* X Y}
	\end{equation}
	and
	\begin{equation} \label{eq:11}
		\sym{X + Y} = \sym{X} + \sym{Y}.
	\end{equation}
	Moreover,
	\begin{equation} \label{eq:12}
		\norm{\sym{X}} \leq \norm{X}.
	\end{equation}
	
	For a~number $x \in \RR$ we define its \emph{negative part} by the formula
	\[
		x^- = \max(0, -x).
	\]
	For a~self-adjoint operator $X$ we define $X^-$ by the spectral theorem.
	
	For any bounded operator $X$ we define its \emph{absolute value} by
	\[
		|X| = (X^* X)^{1/2}.
	\]

\subsection{Total variation}
	Given a~positive integer $N$, we define the total $N$-variation $\calV_N$ of a~sequence of vectors 
	$x = \big(x_n : n \in \NN\big)$ from a~vector space $V$ by
	\[
		\calV_N(x) = \sum_{n = 0}^\infty \norm{x_{n + N} - x_n}.
	\]
	Observe that if $(x_n : n \in \NN)$ has a finite total $N$-variation then for each 
	$j \in \{0, \ldots, N-1\}$ a~subsequence $(x_{k N + j} : k \in \NN)$ is a~Cauchy sequence. 
	
	\begin{proposition} \label{prelim:normedVariation}
		If $V$ is a~normed algebra, then
		\[
			\calV_N(x_n y_n : n \in \NN) \leq \sup_{n \in \NN}{\norm{x_n}}\ \calV_N(y_n : n \in \NN) +
				\sup_{n \in \NN}{\norm{y_n}}\ \calV_N(x_n : n \in \NN).
		\]
	\end{proposition}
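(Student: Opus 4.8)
The plan is to reduce the claim to a term-by-term estimate of the summands appearing in the definition
\[
\calV_N(x_n y_n : n \in \NN) = \sum_{n=0}^\infty \norm{x_{n+N} y_{n+N} - x_n y_n},
\]
using the discrete analogue of the Leibniz rule. First I would fix $n \in \NN$ and insert the mixed term $x_{n+N} y_n$, writing
\[
x_{n+N} y_{n+N} - x_n y_n = x_{n+N}(y_{n+N} - y_n) + (x_{n+N} - x_n) y_n.
\]
Applying the triangle inequality together with the submultiplicativity of the norm in the normed algebra $V$ then gives
\[
\norm{x_{n+N} y_{n+N} - x_n y_n} \leq \norm{x_{n+N}}\,\norm{y_{n+N} - y_n} + \norm{x_{n+N} - x_n}\,\norm{y_n}.
\]

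Next I would bound the leading factors uniformly, namely $\norm{x_{n+N}} \leq \sup_{m \in \NN} \norm{x_m}$ and $\norm{y_n} \leq \sup_{m \in \NN} \norm{y_m}$. Substituting these into the previous display and summing over $n \geq 0$ separates the right-hand side into the two series $\sum_{n=0}^\infty \norm{y_{n+N} - y_n}$ and $\sum_{n=0}^\infty \norm{x_{n+N} - x_n}$, which are by definition $\calV_N(y_n : n \in \NN)$ and $\calV_N(x_n : n \in \NN)$. This yields exactly the asserted inequality.

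There is essentially no obstacle here. The only place where the full algebra structure is used, rather than merely that of a normed space, is the submultiplicativity $\norm{uv} \leq \norm{u}\,\norm{v}$ invoked in the second display; everywhere else the triangle inequality suffices. If either supremum on the right-hand side is infinite the inequality is trivially true, so no case distinction or convergence hypothesis is needed.
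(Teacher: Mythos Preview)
Your proof is correct and is essentially identical to the paper's: both argue via the Leibniz-type splitting of $x_{n+N}y_{n+N}-x_ny_n$, bound each piece by a supremum times a variation increment, and sum. The only cosmetic difference is which mixed term is inserted (you use $x_{n+N}y_n$, the paper uses $x_ny_{n+N}$), which is immaterial.
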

	\begin{proof}
		Observe
		\[
			x_{n+N} y_{n+N} - x_n y_n = (x_{n+N} - x_n) y_{n+N} + x_{n} (y_{n+N} - y_n).
		\]
		Hence,
		\[
			\norm{x_{n+N} y_{n+N} - x_n y_n} \leq \norm{x_{n+N} - x_n} \norm{y_{n+N}} + 
			\norm{x_{n}} \norm{y_{n+N} - y_n}.
      \]
		Consequently,
		\[
			\norm{x_{n+N} y_{n+N} - x_n y_n} \leq \sup_{m \in \NN} \norm{y_{m}} \norm{x_{n+N} - x_n} + 
			\sup_{m \in \NN} \norm{x_{m}} \norm{y_{n+N} - y_n}.
      \]
      Summing by $n$ the result follows.
	\end{proof}
   
\section{generalised eigenvectors and the transfer matrix} \label{sec:genEig}
	For a~number $z \in \CC$, a~non-zero sequence $u = (u_n \colon n \geq 0)$ 
	will be called a~\emph{generalised eigenvector} provided that it satisfies
	\begin{equation} \label{eq:3}
		a_{n-1}^* u_{n-1} + b_n u_n + a_n u_{n+1} = z u_n, \quad (n \geq 1).
	\end{equation}
	For each non-zero $\alpha \in \calH \oplus \calH$ there is a unique generalised
	eigenvector $u$ such that\footnote{We employ the following notation: $(v_1, v_2)^t = 
	\begin{pmatrix}
		v_1 \\
		v_2
	\end{pmatrix}$.} $(u_0, u_1)^t = \alpha$. If the recurrence relation \eqref{eq:3}
	holds also for $n = 0$, with the convention that $a_{-1} = u_{-1} = 0$, then $u$ is 
	a~\emph{formal eigenvector} of the matrix~$A$ associated with $z$.
	
	For each $z \in \CC$ and $n \in \NN$
	we define the \emph{transfer matrix} $B_n(z)$ by
	\begin{equation} \label{eq:13}
		B_n(z) = 
		\begin{pmatrix}
		0 & \Id \\
		-a_n^{-1} a_{n-1}^* & a_{n}^{-1} (z \Id - b_n)
		\end{pmatrix}, \quad (n > 0).
	\end{equation}
	Then for any generalised eigenvector $u$ corresponding to $z$ we have
	\begin{equation} \label{eq:1}
	    \begin{pmatrix}
	        u_n \\
	        u_{n+1}	
		\end{pmatrix}
	    =
	    B_n(z)
	    \begin{pmatrix}
			u_{n-1}\\
			u_n
	    \end{pmatrix}, \quad (n > 0).
	\end{equation}
	It is easy to verify that
	\begin{equation} \label{eq:2}
		B_n^{-1}(z) = 
		\begin{pmatrix}
			(a_{n-1}^*)^{-1} (z \Id - b_n) & -(a_{n-1}^*)^{-1} a_n \\
			\Id & 0
		\end{pmatrix}.
	\end{equation}
	
	The rest of this section concerns relations between generalised eigenvectors and spectral properties
	of block Jacobi matrices.
	
	The proof of \cite[Lemma 2.1]{Beckermann2001} implies that the adjoint operator to $A$ can be
	described as the restriction of $\calA$ to $\ell^2(\NN; \calH)$, i.e. $A^* x = \calA x$ for $x \in \Dom(A^*)$,
	where 
	\begin{equation} \label{eq:35}
		\Dom(A^*) = \{ x \in \ell^2(\NN; \calH) \colon \calA x \in \ell^2(\NN; \calH)\}
	\end{equation}
	
	The following Proposition is essential in examining properties of $A^*$.
	\begin{proposition} \label{prop:9}
		Let $z \in \CC$. The sequence $u$ satisfies $\calA u = z u$ if and only if
		\begin{equation}
	        \begin{gathered} \label{eq:36}
	            u_0 \in \calH, \quad u_1 = a_0^{-1} (z \Id - b_0) u_0, \\
	        	a_{n-1}^* u_{n-1} + b_n u_n + a_n u_{n+1} = z u_n \quad (n \geq 1).
		    	\end{gathered}
		\end{equation}
	\end{proposition}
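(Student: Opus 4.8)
The plan is to unwind the equation $\calA u = z u$ into its individual rows via the formal matrix multiplication and then to observe that the system \eqref{eq:36} is nothing but a rearrangement of these rows, the only nontrivial manipulation occurring in the zeroth row.

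First I would spell out $\calA u = z u$ componentwise. From the shape of $\calA$ the zeroth row reads $b_0 u_0 + a_0 u_1 = z u_0$, whereas for each $n \geq 1$ the $n$-th row reads
\[
    a_{n-1}^* u_{n-1} + b_n u_n + a_n u_{n+1} = z u_n.
\]
These latter equations are literally the second line of \eqref{eq:36}, so the two systems agree for all $n \geq 1$, and the entire content of the proposition is concentrated in the zeroth row together with the prescribed value of $u_1$.

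The key step is then to solve the zeroth-row equation for $u_1$. Since $a_0$ is assumed to have a bounded inverse, the equation $b_0 u_0 + a_0 u_1 = z u_0$ is equivalent to $a_0 u_1 = (z\Id - b_0) u_0$, and hence to $u_1 = a_0^{-1}(z\Id - b_0) u_0$; both implications follow by applying $a_0$, respectively $a_0^{-1}$, to each side. As $u_0 \in \calH$ is left unconstrained in either formulation, this yields the claimed equivalence.

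I do not expect any genuine obstacle: the argument is pure bookkeeping, and the only analytic ingredient is the invertibility of $a_0$, which is precisely what allows the zeroth row to be rewritten as an explicit prescription for $u_1$ in terms of $u_0$.
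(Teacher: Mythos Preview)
Your proposal is correct and is exactly the ``direct computation'' that the paper alludes to in its one-line proof. You have simply spelled out the componentwise meaning of $\calA u = z u$ and used the invertibility of $a_0$ to rewrite the zeroth row, which is precisely what is needed.
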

	\begin{proof}
		It immediately follows from the direct computations.
	\end{proof}

	The following Corollary describes some of the situations when we can describe the deficiency spaces of 
	the operator $A$ explicitly.
	\begin{corollary} \label{cor:4}
		Let $z \in \CC$. If every generalised eigenvector associated with $z$ belongs to $\ell^2(\NN; \calH)$,
		then 
		\begin{equation} \label{eq:34}
			\ker[A^* - z \Id] \simeq \calH.
		\end{equation}
		In particular, if \eqref{eq:34} is satisfied for $z = \pm i$, then
		the symmetric operator $A$ is \emph{not} self-adjoint, but it has self-adjoint extensions.
	\end{corollary}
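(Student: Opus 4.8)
The plan is to identify $\ker[A^* - z\Id]$ with the space of formal eigenvectors associated with $z$ and then to parametrise the latter by their initial value $u_0 \in \calH$. First I would combine the description \eqref{eq:35} of $\Dom(A^*)$ with Proposition~\ref{prop:9}: an element $u \in \ell^2(\NN; \calH)$ lies in $\ker[A^* - z\Id]$ precisely when $\calA u = z u$, which by Proposition~\ref{prop:9} is equivalent to $u$ being a formal eigenvector, i.e. $u_1 = a_0^{-1}(z\Id - b_0) u_0$ together with the recurrence \eqref{eq:3}. Every such formal eigenvector is in particular a generalised eigenvector, so by hypothesis it automatically belongs to $\ell^2(\NN; \calH)$. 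Consequently $\ker[A^* - z\Id]$ coincides with the \emph{full} space of formal eigenvectors associated with $z$, with no $\ell^2$ constraint left to impose.

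Next I would exploit the invertibility of each $a_n$. Given $u_0 \in \calH$, the boundary relation fixes $u_1$, and for $n \geq 1$ the recurrence \eqref{eq:3} may be solved as $u_{n+1} = a_n^{-1}(z u_n - b_n u_n - a_{n-1}^* u_{n-1})$, so all remaining entries are uniquely determined. Hence the linear map $\Phi \colon \calH \to \ker[A^* - z\Id]$, $\Phi(u_0) = u$, is a bijection; injectivity is immediate since $u_0 = 0$ forces $u \equiv 0$ by the recurrence. It remains to see that $\Phi$ is a \emph{topological} isomorphism, which is what $\simeq$ encodes. The inverse $u \mapsto u_0$ is bounded because $\norm{u_0} \leq \norm{u}_{\ell^2(\NN; \calH)}$. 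Since $A^*$ is closed (being the adjoint of a densely defined operator), $\ker[A^* - z\Id]$ is a closed subspace of $\ell^2(\NN; \calH)$, hence a Hilbert space, and the bounded inverse theorem upgrades $\Phi^{-1}$ to an isomorphism, giving $\ker[A^* - z\Id] \simeq \calH$.

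For the final assertion I would invoke von Neumann's extension theory. Applying the isomorphism at $z = i$ and $z = -i$ shows that both deficiency spaces are isomorphic to $\calH$, so the deficiency indices coincide; since $\calH \neq \{0\}$ they are nonzero. Therefore $A$ is not self-adjoint, yet, the indices being equal, it admits self-adjoint extensions. The main obstacle I anticipate is precisely the passage from a bare bijection to a topological isomorphism: the forward boundedness of $\Phi$ is not visible directly from the recurrence, so the argument genuinely rests on the closedness of $A^*$ together with the open mapping theorem. Each ingredient is standard, but this is the point where care is required.
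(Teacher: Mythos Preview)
Your proof is correct and follows essentially the same route as the paper. The only cosmetic difference is the direction of the identifying map: the paper works with $T\colon \ker[A^*-z\Id]\to\calH$, $Tu=u_0$, shows it is a bounded bijection and invokes the inverse mapping theorem, while you parametrise in the opposite direction via $\Phi$ and apply the bounded inverse theorem to $\Phi^{-1}$; these are the same argument, and your justification of closedness of the kernel (via closedness of $A^*$) is equivalent to the paper's use of $\ker[A^*-z\Id]=\Imag{A-\overline{z}\Id}^\perp$.
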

	\begin{proof}
		Observe that the space $\ker[A^* - z \Id]$ is a~Hilbert space. Indeed, since 
		$\ker[A^* - z \Id] = \Imag{A - \overline{z} \Id}^\perp$ (see, e.g., \cite[formula (7.1.45)]{Simon2017}) 
		it is a~closed subspace of $\ell^2(\NN; \calH)$. 
		
		Define the operator $T : \ker[A^* - z \Id] \rightarrow \calH$ by $T u = u_0$. Then by \eqref{eq:36} 
		$T u = 0$ implies $u=0$, hence, $T$ is injective. To prove the surjectivity take 
		$u_0 \in \calH \setminus \{ 0 \}$, then the sequence $u$ defined by \eqref{eq:36} is a~generalised
		eigenvector associated with $z$. Therefore, it belongs to $\ell^2(\NN; \calH)$. Hence, by \eqref{eq:35} 
		$u \in \Dom(A^*)$, and consequently, $T$ is surjective. Since the mapping $T$ is a~contraction, it is 
		a bounded linear bijection. By the inverse mapping theorem the operator $T$ is a~linear isomorphism.
		
		The assertion about the self-adjoint extensions of $A$ follows from von Neumann’s Extension Theorem 
		(see, e.g., \cite[Theorem 7.4.1]{Simon2017}).
	\end{proof}
	
	\begin{remark}
		The proof of \cite[Theorem 1]{Kostyuchenko1998} shows that the same conclusion holds if every generalised
		eigenvector associated with $z=0$ belongs to $\ell^2(\NN; \calH)$. As it was pointed 
		out in \cite{Braeutigam2016} the formulation of \cite[Theorem 1]{Kostyuchenko1998} has a~typo.
	\end{remark}

	The following Proposition is an adaptation of \cite[Proposition 2.1]{Swiderski2016}. 
	We include it for the sake of self-containment.
   \begin{proposition} \label{prop:3}
      Let $z \in \CC$. If every generalised eigenvector $u$ associated with $z$ does 
      not belong to $\ell^2(\NN; \calH)$ then $z \notin \sigmaP{A^*}$ and 
      $z \in \sigma(A^*)$.
   \end{proposition}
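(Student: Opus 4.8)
The plan is to treat both assertions through the single device of reading the equation $(A^* - z\Id)x = y$ component-wise, using the description of the adjoint as $A^* x = \calA x$ on the maximal domain \eqref{eq:35}. The crucial observation is that for every $n \geq 1$ the $n$-th component of $\calA x - z x$ is precisely the left-hand side of the recurrence \eqref{eq:3} minus $z x_n$; consequently, any $x \in \Dom(A^*)$ for which $(\calA x - z x)_n = 0$ for all $n \geq 1$ is either the zero sequence or a generalised eigenvector associated with $z$ that, moreover, lies in $\ell^2(\NN; \calH)$.

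For the first assertion I would take $u \in \ker[A^* - z\Id]$, so that $\calA u = z u$ and $u \in \ell^2(\NN; \calH)$. By Proposition~\ref{prop:9} (indeed already by formal matrix multiplication) the sequence $u$ satisfies \eqref{eq:3} for every $n \geq 1$. If $u \neq 0$, it is therefore a generalised eigenvector belonging to $\ell^2(\NN; \calH)$, contradicting the hypothesis. Hence $u = 0$, and $z \notin \sigmaP{A^*}$.

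For the second assertion I would argue by contradiction, assuming $z \in \rho(A^*)$, so that $A^* - z\Id$ maps $\Dom(A^*)$ onto $\ell^2(\NN; \calH)$. Fix $v \in \calH \setminus \{0\}$ and let $y$ be the finitely supported sequence with $y_0 = v$ and $y_n = 0$ for $n \geq 1$, so $y \in \ell^2(\NN; \calH)$. By surjectivity there is $x \in \Dom(A^*)$ with $(A^* - z\Id)x = y$. Reading off the components for $n \geq 1$ shows that $x$ satisfies \eqref{eq:3}, while the component at $n = 0$ yields $b_0 x_0 + a_0 x_1 - z x_0 = v$. If $x = 0$ this forces $v = 0$, a contradiction; if $x \neq 0$, then $x$ is a generalised eigenvector lying in $\ell^2(\NN; \calH)$, again contradicting the hypothesis. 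Hence $y$ is not in the range of $A^* - z\Id$, this operator fails to be surjective, and therefore $z \in \sigma(A^*)$.

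The argument is essentially soft, so I do not anticipate a genuine obstacle; the only points requiring care are the identification of $\Dom(A^*)$ with the maximal domain via \eqref{eq:35} and the elementary but decisive remark that an inhomogeneity supported at the single site $n = 0$ leaves the recurrence undisturbed for $n \geq 1$, so that any solution is either trivial or a bona fide generalised eigenvector. The main bookkeeping is simply keeping track of the boundary component at $n = 0$.
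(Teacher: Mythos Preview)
Your proof is correct and follows essentially the same route as the paper: for the first assertion you observe that any eigenvector of $A^*$ would be a generalised eigenvector lying in $\ell^2(\NN;\calH)$, and for the second you test surjectivity of $A^* - z\Id$ against the vector $\delta_0 v$, noting that any preimage would satisfy the recurrence \eqref{eq:3} for $n \geq 1$ and hence be a generalised eigenvector in $\ell^2(\NN;\calH)$. Your treatment is slightly more explicit than the paper's in handling the trivial case $x = 0$, but the argument is otherwise identical.
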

   \begin{proof}
     Let $u \neq 0$ be such that $\calA u = z u$, then by Proposition~\ref{prop:9} $u$ is 
     a~generalised eigenvector associated with $z$. By the assumption $u \notin \ell^2(\NN; \calH)$. 
     Therefore, $u \notin \Dom(A^*)$, and consequently, $z \notin \sigmaP{A^*}$.
      
     Observe that the vector $u$ such that $(\calA - z \Id) u = \delta_0 v$, where $0 \neq v \in \calH$
     has to satisfy the following recurrence relation
     \[
        \begin{gathered}
	        b_0 + a_0 u_1 = z u_0 + v \\
    	 	a_{n-1}^* u_{n-1} + b_n u_n + a_n u_{n+1} = z u_n \quad (n \geq 1)
     	\end{gathered}
     \]
     Hence $u$ is a~generalised eigenvector, thus $u \notin \ell^2(\NN; \calH)$. Therefore, 
     $u \notin \Dom(A^*)$, and consequently, the operator $A^* - z \Id$ is not surjective, 
     i.e. $z \in \sigma(A^*)$.
   \end{proof}

	\begin{remark}
		In the scalar case, if the assumptions of Proposition~\ref{prop:3} are satisfied for $z=0$, then
		the operator $A$ is self-adjoint. We expect the same behaviour for every $\calH$.
	\end{remark}

\section{A commutator approach} \label{sec:commutatorApproach}
	The aim of this Section is to prove the following Theorem. 
   	\begin{theorem} \label{thm:A}
      Let $A$ be a~Jacobi matrix. Assume that there is a~sequence $(\alpha_n : n \geq 0)$ of 
      elements from $\calB(\calH)$ such that
	 \begin{enumerate}[(a)]
		\item $\begin{aligned}[b]
			\sum_{n=1}^\infty \frac{\norm{\sym{\alpha_{n+1} a_{n+1}^* - 
			a_n^* a_{n-1}^{-1} \alpha_{n-1} a_n}^-}}{\norm{\alpha_n a_n^*}} < \infty,
		\end{aligned}$ \label{thm:A:eq:1}
		\item $\begin{aligned}[b]
			\sum_{n=1}^\infty \frac{\norm{a_{n-1}^{-1} \alpha_{n-1} a_n - \alpha_n}}
			{\norm{\alpha_n a_n^*}} < \infty,
		\end{aligned}$ \label{thm:A:eq:2}
		\item $\begin{aligned}[b]
			\sum_{n=0}^\infty \frac{\norm{\alpha_n b_{n+1} - b_n a_{n-1}^{-1} \alpha_{n-1} a_n}}
			{\norm{\alpha_n a_n^*}} < \infty,
		\end{aligned}$ \label{thm:A:eq:3}
		\item $\begin{aligned}[b]
			\sum_{n=0}^\infty \frac{1}{\norm{\alpha_n a_n^*}} = \infty.
		\end{aligned}$ \label{thm:A:eq:4}
	 \end{enumerate}
	 Let $\Lambda$ be the set of $\lambda \in \RR$ such that the following limit exists 
	 in the norm and defines a~strictly positive operator on $\calH \oplus \calH$
	 \[
	 	C(\lambda) = \lim_{n \rightarrow \infty} \frac{1}{\norm{\alpha_n a_n^*}} 
	 	\sym{
			\begin{pmatrix}
				\alpha_n a_{n}^* & -(\lambda \Id - b_n) a_{n-1}^{-1} \alpha_{n-1} a_n \\
				0 & a_n^* a_{n-1}^{-1} \alpha_{n-1} a_n
			\end{pmatrix}
		}.
	 \]
	Then $\sigmaP{A^*} \cap \Lambda = \emptyset$ and $\sigma(A^*) \supset \overline{\Lambda}$.
   	\end{theorem}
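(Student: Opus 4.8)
The plan is to show that under the hypotheses every generalised eigenvector $u$ associated with $\lambda \in \Lambda$ fails to be in $\ell^2(\NN;\calH)$, and then invoke Proposition~\ref{prop:3} to conclude that $\lambda \notin \sigmaP{A^*}$ and $\lambda \in \sigma(A^*)$; since $\sigma(A^*)$ is closed this upgrades to $\overline{\Lambda}$. The engine driving the non-$\ell^2$ conclusion will be a scalar sequence of quadratic forms built from $C(\lambda)$, namely
\[
	S_n = \left\langle X_n(\lambda)
	\begin{pmatrix} u_{n-1} \\ u_n \end{pmatrix},
	\begin{pmatrix} u_{n-1} \\ u_n \end{pmatrix}
	\right\rangle_{\calH \oplus \calH},
	\qquad
	X_n(\lambda) = \frac{1}{\norm{\alpha_n a_n^*}}
	\sym{
		\begin{pmatrix}
			\alpha_n a_n^* & -(\lambda\Id - b_n) a_{n-1}^{-1} \alpha_{n-1} a_n \\
			0 & a_n^* a_{n-1}^{-1} \alpha_{n-1} a_n
		\end{pmatrix}
	}.
\]
First I would establish that $S_n > 0$ for all large $n$ and $\lambda \in K \subset \Lambda$ compact, using that $X_n(\lambda) \to C(\lambda)$ is strictly positive (uniformly on $K$ by a compactness argument). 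The crux is to compare $S_{n+1}$ with $S_n$ along the recurrence \eqref{eq:1}, i.e. to control the increment
\[
	\left\langle \bigl( B_n^*(\lambda) X_{n+1}(\lambda) B_n(\lambda) - X_n(\lambda) \bigr)
	\begin{pmatrix} u_{n-1} \\ u_n \end{pmatrix},
	\begin{pmatrix} u_{n-1} \\ u_n \end{pmatrix}
	\right\rangle.
\]

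The key algebraic step, which I expect to be the main obstacle, is to show that the conjugated operator $B_n^*(\lambda) X_{n+1}(\lambda) B_n(\lambda)$ differs from $X_n(\lambda)$ by terms whose norms, after division by $\norm{\alpha_n a_n^*}$, are summable — this is precisely what conditions \eqref{thm:A:eq:1}, \eqref{thm:A:eq:2} and \eqref{thm:A:eq:3} encode. Here the choice of the auxiliary sequence $\alpha_n$ is tailored so that the symmetrisation kills the leading terms: using \eqref{eq:9} one has $B_n^* \sym{Y} B_n = \sym{B_n^* Y B_n}$, and a direct (if tedious) multiplication of $B_n(\lambda)$ against the matrix defining $X_{n+1}$ should produce, modulo symmetrisation and the commutator-type remainders $\alpha_n a_{n+1}^* - a_n^* a_{n-1}^{-1}\alpha_{n-1} a_n$, $a_{n-1}^{-1}\alpha_{n-1} a_n - \alpha_n$ and $\alpha_n b_{n+1} - b_n a_{n-1}^{-1}\alpha_{n-1}a_n$, the matrix defining $\norm{\alpha_{n+1} a_{n+1}^*}X_{n+1}$. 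The negative-part norm in \eqref{thm:A:eq:1} is what allows the increment to be bounded below rather than merely in absolute value, so one obtains
\[
	S_{n+1} \geq S_n - \frac{\varepsilon_n}{\norm{\alpha_n a_n^*}} \bigl( \norm{u_{n-1}}^2 + \norm{u_n}^2 \bigr),
	\qquad \sum_n \varepsilon_n < \infty.
\]

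Because $X_n(\lambda) \to C(\lambda) > 0$, there is $c > 0$ with $S_n \geq c \norm{\alpha_n a_n^*}^{-1}(\norm{u_{n-1}}^2 + \norm{u_n}^2)$ for large $n$, so the perturbation is of the form $S_{n+1} \geq (1 - c^{-1}\varepsilon_n) S_n$ up to normalisation; a discrete Gronwall (telescoping the product $\prod(1 - c^{-1}\varepsilon_n)$, which converges to a positive limit since $\sum \varepsilon_n < \infty$) then shows $S_n$ stays bounded away from zero. Combined again with $S_n \approx \norm{\alpha_n a_n^*}^{-1}(\norm{u_{n-1}}^2 + \norm{u_n}^2)$, this forces
\[
	\norm{u_{n-1}}^2 + \norm{u_n}^2 \geq c' \norm{\alpha_n a_n^*},
\]
and then $\sum_n \norm{u_n}^2 \geq c'' \sum_n \norm{\alpha_n a_n^*}^{-1} \cdot \norm{\alpha_n a_n^*} \cdot \norm{\alpha_n a_n^*}^{-1}$ — more directly, summing $\norm{u_n}^2 \gtrsim \norm{\alpha_n a_n^*}$ against the divergence in \eqref{thm:A:eq:4} gives $\sum \norm{u_n}^2 = \infty$, so $u \notin \ell^2(\NN;\calH)$. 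I would handle uniformity over the compact $K$ by noting all estimates depend continuously on $\lambda$ and $C(\lambda)$ is uniformly strictly positive there; the final invocation of Proposition~\ref{prop:3} and closedness of the spectrum completes the argument.
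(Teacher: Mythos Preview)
Your overall architecture — build a quadratic form $S_n$, run a discrete Gronwall on the ratio $S_{n+1}/S_n$, conclude $u\notin\ell^2$, then invoke Proposition~\ref{prop:3} — is exactly the paper's. But the way you normalise $S_n$ breaks the argument.

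By putting the factor $1/\norm{\alpha_n a_n^*}$ \emph{inside} the definition of $X_n$, your $S_n$ satisfies $S_n\asymp\norm{u_{n-1}}^2+\norm{u_n}^2$ with constants independent of $n$ (since $X_n\to C(\lambda)$). In the unbounded situations the theorem is meant to cover, the generalised eigenvectors have $\norm{u_n}\to 0$, so your $S_n\to 0$ and no Gronwall lower bound can possibly hold. Concretely, take $\calH=\RR$, $a_n=n$, $b_n=0$, $\alpha_n=1$: then $X_n\to\Id$, and a direct computation gives the $(1,1)$ entry of $B_n^*X_{n+1}B_n-X_n$ of order $-2/n$, which is not summable — so your claimed increment estimate $S_{n+1}\ge S_n-\frac{\varepsilon_n}{\norm{\alpha_n a_n^*}}(\norm{u_{n-1}}^2+\norm{u_n}^2)$ with $\sum\varepsilon_n<\infty$ is false. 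Relatedly, the bound you write at the end, $\norm{u_{n-1}}^2+\norm{u_n}^2\ge c'\norm{\alpha_n a_n^*}$, has the inequality the wrong way round; the correct conclusion is $\ge c'/\norm{\alpha_n a_n^*}$, and it is here that hypothesis~\eqref{thm:A:eq:4} enters.

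The paper avoids this by working with the \emph{unnormalised} quantity $S_n=\norm{\alpha_n a_n^*}\,Q_n^\lambda\bigl((u_{n-1},u_n)^t\bigr)$, so that $S_n\asymp\norm{\alpha_n a_n^*}(\norm{u_n}^2+\norm{u_{n+1}}^2)$. The second ingredient you are missing is Proposition~\ref{prop:4}: the same $S_n$ has a second representation as a form on $(u_n,u_{n+1})^t$ with a different matrix. Writing $S_{n+1}$ in the first representation and $S_n$ in the second puts both on the \emph{same} vector $(u_n,u_{n+1})^t$, so $S_{n+1}-S_n$ becomes the quadratic form of a single matrix difference (Lemma~\ref{lem:2}), whose entries are exactly the numerators in \eqref{thm:A:eq:1}--\eqref{thm:A:eq:3}. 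The factor $1/\norm{\alpha_n a_n^*}$ then appears only when you divide by $S_n$ to form $F_n=(S_{n+1}-S_n)/S_n$, giving $\sum[F_n]^-<\infty$ and hence $\liminf S_n>0$; finally $\norm{u_n}^2+\norm{u_{n+1}}^2\ge c/\norm{\alpha_n a_n^*}$ together with~\eqref{thm:A:eq:4} yields $u\notin\ell^2$.
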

	
	Given sequence $(\alpha_n : n \geq 0)$ of elements from $\calB(\calH)$ and $\lambda \in \RR$ we define 
	a sequence of binary quadratic forms $Q^\lambda$ on $\calH \oplus \calH$ by the formula
	\[
		Q_n^\lambda (v) = \frac{1}{\norm{\alpha_n a_n^*}} \left\langle 
		\sym{
			\begin{pmatrix}
				\alpha_{n-1} a_{n-1}^* & -\alpha_{n-1} (\lambda \Id - b_n) \\
				0 & \alpha_n a_n^*
			\end{pmatrix} 
		} v, v \right\rangle.
	\]
	Moreover, we define the sequence of functions by the formula
	\begin{equation} \label{eq:8}
		S_n(\alpha, \lambda) = \norm{\alpha_n a_n^*} Q_n^\lambda
		\lrparen{
		\begin{pmatrix} 
			u_{n-1} \\
			u_n
		\end{pmatrix}},
	\end{equation}
	where $u$ is the generalised eigenvector corresponding to $\lambda$ such that 
	$(u_0, u_1)^t = \alpha \in \calH \oplus \calH$.
	
	The first proposition provides a~different representation of $S_n$.
	\begin{proposition} \label{prop:4}
		An alternative formula for $S_n$ is
		\[
			S_n(\alpha, \lambda) = \left\langle
			\sym{
				\begin{pmatrix}
					\alpha_n a_n^* & -(\lambda \Id - b_n) a_{n-1}^{-1} \alpha_{n-1} a_n \\
					0 & a_n^* a_{n-1}^{-1} \alpha_{n-1} a_n
				\end{pmatrix}
			}
			\begin{pmatrix}
				u_{n} \\
				u_{n+1}
			\end{pmatrix},
			\begin{pmatrix}
				u_{n} \\
				u_{n+1}
			\end{pmatrix}
			\right\rangle.
		\]
	\end{proposition}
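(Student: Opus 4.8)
The plan is to show that the two expressions for $S_n(\alpha,\lambda)$ coincide by starting from the definition \eqref{eq:8} and using the recurrence relation \eqref{eq:3} to shift the index. The definition involves the quadratic form $Q_n^\lambda$ evaluated at $(u_{n-1}, u_n)^t$, while the target expression is a quadratic form evaluated at $(u_n, u_{n+1})^t$. So the heart of the matter is an index shift: I must rewrite a quadratic form at level $n$ in terms of the vector $(u_n, u_{n+1})^t$ one step further along.

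First I would expand the bracket in \eqref{eq:8} explicitly. Using the definition of $\sym{\cdot}$ and pairing against $(u_{n-1}, u_n)^t$, the real-part structure lets me write
\[
	S_n(\alpha,\lambda) = \Re\!\left[ \sprod{\alpha_{n-1} a_{n-1}^* u_{n-1}}{u_{n-1}}
	- \sprod{\alpha_{n-1}(\lambda\Id - b_n) u_n}{u_{n-1}}
	+ \sprod{\alpha_n a_n^* u_n}{u_n} \right],
\]
where I have used that for a self-adjoint-real-part form $\sprod{\sym{X} v}{v} = \Re[\sprod{X v}{v}]$. The middle term is the one to attack: it contains $(\lambda\Id - b_n) u_n$, and the recurrence \eqref{eq:3} gives exactly $(\lambda\Id - b_n)u_n = a_{n-1}^* u_{n-1} + a_n u_{n+1}$ (moving $z = \lambda$ and rearranging). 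Substituting this identity is the key step. The term $\sprod{\alpha_{n-1} a_{n-1}^* u_{n-1}}{u_{n-1}}$ produced by the substitution will, after taking the real part, cancel against the first term $\Re\sprod{\alpha_{n-1} a_{n-1}^* u_{n-1}}{u_{n-1}}$; this cancellation is what eliminates the index-$(n-1)$ diagonal contribution and is where the specific form of $Q_n^\lambda$ is designed to cooperate.

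After the cancellation I am left with a combination of $\sprod{\alpha_n a_n^* u_n}{u_n}$ and cross terms in $u_n, u_{n+1}$ coming from the $a_n u_{n+1}$ piece. I would then regroup these so that they match the entries of the target matrix: the $(1,1)$ entry $\alpha_n a_n^*$ pairs with $u_n$ against $u_n$, the off-diagonal entry $-(\lambda\Id - b_n) a_{n-1}^{-1}\alpha_{n-1} a_n$ pairs $u_{n+1}$ against $u_n$, and the $(2,2)$ entry $a_n^* a_{n-1}^{-1}\alpha_{n-1} a_n$ pairs $u_{n+1}$ against $u_{n+1}$. To produce the off-diagonal and $(2,2)$ entries in exactly this form I would again invoke the recurrence, this time substituting $(\lambda\Id - b_n)u_n$ a second time (or equivalently tracking how $u_{n+1} = a_n^{-1}[(\lambda\Id - b_n)u_n - a_{n-1}^* u_{n-1}]$ feeds back), which is what introduces the factor $a_{n-1}^{-1}\alpha_{n-1} a_n$. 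Taking $\Re[\cdot]$ at the end reconstitutes the $\sym{\cdot}$ of the claimed matrix.

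**The main obstacle** I anticipate is purely bookkeeping rather than conceptual: since $\alpha_n$, $a_n$, and $b_n$ need not commute and $b_n$ is self-adjoint while the $\alpha_n$ are arbitrary, I must be careful about the order of operators and about which adjoints land where when I take real parts. In particular, identity \eqref{eq:9}, $Y^*\sym{X} Y = \sym{Y^* X Y}$, and the additivity \eqref{eq:11} are the right tools to keep the manipulations honest, and I expect the substitution of the recurrence to have to be applied in a way that respects that $\Re\sprod{T v}{w} = \Re\sprod{w}{T^* v}$. The risk is an accidental transposition of a cross term; to guard against it I would verify the scalar case $\calH = \CC$ with commuting entries as a sanity check, where both formulas must reduce to the known scalar identity from \cite{Swiderski2016}. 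Since the statement asserts the two are merely algebraically equal for every generalised eigenvector, no convergence or analytic input is needed — once the recurrence substitution and the cancellation are carried out correctly, the proof closes by direct computation.
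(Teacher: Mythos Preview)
Your plan is correct and will close the proof. After the first substitution of the recurrence you obtain
\[
	S_n = \Re\!\left[\sprod{\alpha_n a_n^* u_n}{u_n} - \sprod{\alpha_{n-1} a_n u_{n+1}}{u_{n-1}}\right],
\]
and the second substitution you describe --- writing $u_{n-1} = (a_{n-1}^*)^{-1}\big[(\lambda\Id - b_n)u_n - a_n u_{n+1}\big]$ in the remaining cross term and using that $\lambda\Id - b_n$ is self-adjoint --- produces exactly the three entries of the target matrix.

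The paper carries out the same index shift, but in one stroke at the matrix level rather than component-wise: it substitutes $(u_{n-1},u_n)^t = B_n^{-1}(\lambda)(u_n,u_{n+1})^t$ from \eqref{eq:1}, then uses \eqref{eq:9} to write the resulting quadratic form as $\sym{(B_n^{-1})^* M\, B_n^{-1}}$ with $M$ the original $2\times 2$ block matrix, and finally computes this triple product explicitly via the formula \eqref{eq:2} for $B_n^{-1}$. Your two scalar substitutions of the recurrence are precisely what the conjugation by $B_n^{-1}$ does entry by entry, so the two arguments are the same computation at different levels of abstraction. The paper's version avoids the bookkeeping hazard you flag (adjoints and operator ordering) by keeping everything packaged in block matrices until the final product, which is why it reads more cleanly; your version has the advantage of making the cancellation of the $u_{n-1}$ diagonal term visible.
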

	\begin{proof}
		By \eqref{eq:1} one has
		\begin{multline*}
			S_n(\alpha, \lambda) = \left\langle 
			\sym{
				\begin{pmatrix}
					\alpha_{n-1} a_{n-1}^* & -\alpha_{n-1} (\lambda \Id - b_n) \\
					0 & \alpha_n a_n^*
				\end{pmatrix}
			}
			B^{-1}_n(\lambda) 
			\begin{pmatrix} 
				u_{n} \\
				u_{n+1}
			\end{pmatrix}, 
			B^{-1}_n(\lambda) 
			\begin{pmatrix} 
				u_{n} \\
				u_{n+1}
			\end{pmatrix}
			\right\rangle \\
			=
			\left\langle
			(B_n^{-1}(\lambda))^{*} 
			\sym{
				\begin{pmatrix}
					\alpha_{n-1} a_{n-1}^* & -\alpha_{n-1} (\lambda \Id - b_n) \\
					0 & \alpha_n a_n^*
				\end{pmatrix} 
			}
			B_n^{-1}(\lambda)
			\begin{pmatrix} 
				u_{n} \\
				u_{n+1}
			\end{pmatrix},
			\begin{pmatrix} 
				u_{n} \\
				u_{n+1}
			\end{pmatrix}			
			\right\rangle.
		\end{multline*}
		Then formula \eqref{eq:2} implies
		\begin{multline*}
			(B_n^{-1}(\lambda))^{*} 
			\begin{pmatrix}
				\alpha_{n-1} a_{n-1}^* & -\alpha_{n-1} (\lambda \Id - b_n) \\
				0 & \alpha_n a_n^*
			\end{pmatrix} 
			B_n^{-1}(\lambda) \\
			= 
			\begin{pmatrix} 
				(\lambda \Id - b_n) a_{n-1}^{-1} & \Id \\
				- a_{n}^* a_{n-1}^{-1} & 0
			\end{pmatrix}
			\begin{pmatrix}
				0 & -\alpha_{n-1} a_n \\
				\alpha_n a_n^* & 0
			\end{pmatrix} \\
			= 
			\begin{pmatrix}
				\alpha_n a_n^* & -(\lambda \Id - b_n) a_{n-1}^{-1} \alpha_{n-1} a_n \\
				0 & a_{n}^* a_{n-1}^{-1} \alpha_{n-1} a_n
			\end{pmatrix}.
		\end{multline*}
		Hence, by formula~\eqref{eq:9}
		\[
			S_n(\alpha, \lambda) = \left\langle
			\sym{
				\begin{pmatrix}
					\alpha_n a_n^* & -(\lambda \Id - b_n) a_{n-1}^{-1} \alpha_{n-1} a_n \\
					0 & a_n^* a_{n-1}^{-1} \alpha_{n-1} a_n
				\end{pmatrix}
			}
			\begin{pmatrix}
				u_{n} \\
				u_{n+1}
			\end{pmatrix},
			\begin{pmatrix}
				u_{n} \\
				u_{n+1}
			\end{pmatrix}
			\right\rangle
		\]
		what ends the proof.
	\end{proof}
	
	The next proposition provides assumptions on the quadratic form under which
	it controls the norm of generalised eigenvectors.
	\begin{proposition} \label{prop:1}
		Let $\Lambda$ be the set of $\lambda \in \RR$ such that the following limit exists in the operator norm
		and defines a~strictly positive operator
		 \[
		 	C(\lambda) = \lim_{n \rightarrow \infty} \frac{1}{\norm{\alpha_n a_n^*}} 
		 	\sym{
				\begin{pmatrix}
					\alpha_n a_{n}^* & -(\lambda \Id - b_n) a_{n-1}^{-1} \alpha_{n-1} a_n \\
					0 & a_n^* a_{n-1}^{-1} \alpha_{n-1} a_n
				\end{pmatrix}
			}.
		 \]
		Then for every $\lambda \in \Lambda$ there is an integer $N$ and positive constants $c_1, c_2$ 
		such that for every generalised eigenvector $u$ associated with $\lambda$ and 
		$0 \neq \alpha \in \calH \oplus \calH$
		\[
			c_1 \norm{\alpha_n a_n^*} (\norm{u_{n}}^2 + \norm{u_{n+1}}^2) \leq 
			S_n(\alpha, \lambda) \leq 
			c_2 \norm{\alpha_n a_n^*} (\norm{u_{n}}^2 + \norm{u_{n+1}}^2), \quad (n \geq N)
		\]
	\end{proposition}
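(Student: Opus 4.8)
The plan is to reduce the claimed two-sided estimate to a single operator inequality for the normalised quadratic form. By Proposition~\ref{prop:4} we may write
\[
S_n(\alpha, \lambda) = \left\langle M_n \begin{pmatrix} u_n \\ u_{n+1} \end{pmatrix}, \begin{pmatrix} u_n \\ u_{n+1} \end{pmatrix} \right\rangle, \qquad M_n = \sym{\begin{pmatrix} \alpha_n a_n^* & -(\lambda \Id - b_n) a_{n-1}^{-1} \alpha_{n-1} a_n \\ 0 & a_n^* a_{n-1}^{-1} \alpha_{n-1} a_n \end{pmatrix}},
\]
and each $M_n$ is self-adjoint, being a real part. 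Setting $D_n = M_n / \norm{\alpha_n a_n^*}$, the very definition of $\Lambda$ gives $\norm{D_n - C(\lambda)} \to 0$, where $C(\lambda)$ is self-adjoint and strictly positive on $\calH \oplus \calH$.

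First I would record what strict positivity buys us: it means $C(\lambda) \geq \epsilon \Id$ for some $\epsilon > 0$ (equivalently $\inf \sigma(C(\lambda)) > 0$), and trivially $C(\lambda) \leq \norm{C(\lambda)} \Id$. Choose $N$ so large that $\norm{D_n - C(\lambda)} < \epsilon/2$ for all $n \geq N$. Then for every $v \in \calH \oplus \calH$ and $n \geq N$, writing $\langle D_n v, v\rangle = \langle C(\lambda) v, v\rangle + \langle (D_n - C(\lambda)) v, v\rangle$ and estimating the perturbation term by $\norm{D_n - C(\lambda)}\,\norm{v}^2$, one obtains the operator inequality
\[
c_1 \Id \leq D_n \leq c_2 \Id, \qquad c_1 = \tfrac{\epsilon}{2}, \quad c_2 = \norm{C(\lambda)} + \tfrac{\epsilon}{2}.
\]

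It remains to specialise $v$ to the vector of consecutive eigenvector values. Taking $v = (u_n, u_{n+1})^t$, so that $\norm{v}^2 = \norm{u_n}^2 + \norm{u_{n+1}}^2$, and using $\langle D_n v, v\rangle = S_n(\alpha,\lambda)/\norm{\alpha_n a_n^*}$, the inequality above becomes, after multiplying through by $\norm{\alpha_n a_n^*}$, exactly the asserted bound for $n \geq N$. Note that the constants $c_1, c_2$ and the threshold $N$ depend only on $\lambda$ through $C(\lambda)$, not on the particular generalised eigenvector $u$ nor on the initial data $\alpha$; thus the uniformity over all $u$ and $\alpha$ demanded by the statement is automatic, since the operator inequality holds for all vectors $v$ simultaneously.

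There is essentially no hard step here: the argument is a soft consequence of norm convergence to a boundedly invertible positive operator, combined with the quadratic-form identity from Proposition~\ref{prop:4}. The only point that must be pinned down is the interpretation of ``strictly positive'' as \emph{bounded below}, i.e. $C(\lambda) \geq \epsilon \Id$; this is precisely what delivers the lower constant $c_1$, and it is indispensable because on a possibly infinite-dimensional $\calH \oplus \calH$ mere positivity of the quadratic form would not suffice to control $\norm{u_n}^2 + \norm{u_{n+1}}^2$ from above by $S_n$.
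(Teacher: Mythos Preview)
Your proof is correct and follows essentially the same approach as the paper: use Proposition~\ref{prop:4} to express $S_n/\norm{\alpha_n a_n^*}$ as a quadratic form in $(u_n,u_{n+1})^t$ governed by a self-adjoint operator $D_n$, then exploit the norm convergence $D_n \to C(\lambda)$ together with the strict positivity $C(\lambda) \geq \epsilon\,\Id$ to obtain uniform two-sided bounds $c_1\,\Id \leq D_n \leq c_2\,\Id$ for large $n$. The paper phrases this via the spectral extrema $\min\sigma(D_n)$ and $\max\sigma(D_n)$ converging to those of $C(\lambda)$, which is the same argument in slightly different language.
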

	\begin{proof}
		Fix $\lambda \in \Lambda$. Let
		\[
			\mu^{\textrm{min}}_n = \min \sigma(Z_n), \quad 
			\mu^{\textrm{max}}_n = \max \sigma(Z_n),
		\]
		where
		\[
			Z_n = \frac{1}{\norm{\alpha_n a_n^*}}
			\sym{
			\begin{pmatrix}
				\alpha_{n-1} a_{n-1}^* & -\alpha_{n-1} (\lambda \Id - b_n) \\
				0 & \alpha_n a_n^*
			\end{pmatrix} 
		}.
		\]
		Hence,
		\[
			\mu^{\textrm{min}}_n
			\leq \frac{S_n(\alpha, \lambda)}{\norm{\alpha_n a_n^*}(\norm{u_{n}}^2 + \norm{u_{n+1}}^2)} 
			\leq \mu^{\textrm{max}}_n.
		\]
		But from the definition of $C(\lambda)$ we have
		\[
			\lim_{n \rightarrow \infty} \mu^{\textrm{min}}_n = \min \sigma(C(\lambda)), \quad 
			\lim_{n \rightarrow \infty} \mu^{\textrm{max}}_n = \max \sigma(C(\lambda))
		\]
		which are positive numbers. Therefore, there is $N$ and $c_1, c_2>0$ such that for every $n \geq N$
		\[
			c_1
			\leq \frac{S_n(\alpha, \lambda)}{\norm{\alpha_n a_n^*}(\norm{u_{n}}^2 + \norm{u_{n+1}}^2)} 
			\leq c_2
		\]
		and the proof is complete.
	\end{proof}
	
	The next corollary together with Proposition~\ref{prop:3} suggest the method of proving that every 
	$\lambda \in \Lambda$ is not an eigenvalue of $A$ but belongs to $\sigma(A)$.
	\begin{corollary} \label{cor:1}
		Under the assumptions of Proposition~\ref{prop:1}, together with
		\[
			\sum_{n = 0}^\infty \frac{1}{\norm{\alpha_n a_n^*}} = \infty
		\]
		if
		\[
			\liminf_{n \rightarrow \infty} S_n(\alpha, \lambda) > 0,
		\]
		then $u$ does not belong to $\ell^2(\NN; \calH)$.
	\end{corollary}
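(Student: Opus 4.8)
The plan is to convert the strict positivity of the quadratic forms $S_n$ into a pointwise lower bound for $\norm{u_n}^2 + \norm{u_{n+1}}^2$ and then sum against the divergent series appearing in the hypothesis. The only ingredient needed beyond the two extra assumptions is the \emph{upper} estimate furnished by Proposition~\ref{prop:1}.

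First I would fix $\lambda \in \Lambda$ and use $\liminf_{n \rightarrow \infty} S_n(\alpha, \lambda) > 0$ to select a constant $s > 0$ together with an index $N_1$, which may be taken at least as large as the $N$ produced by Proposition~\ref{prop:1}, such that $S_n(\alpha, \lambda) \geq s$ for all $n \geq N_1$.

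Next, for $n \geq N_1$ I would invoke the right-hand inequality of Proposition~\ref{prop:1},
\[
	S_n(\alpha, \lambda) \leq c_2 \norm{\alpha_n a_n^*} \big( \norm{u_n}^2 + \norm{u_{n+1}}^2 \big),
\]
and divide through to obtain
\[
	\norm{u_n}^2 + \norm{u_{n+1}}^2 \geq \frac{s}{c_2} \cdot \frac{1}{\norm{\alpha_n a_n^*}}, \quad (n \geq N_1).
\]

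Finally I would sum this bound over $n \geq N_1$. By the added hypothesis $\sum_n 1/\norm{\alpha_n a_n^*} = \infty$ the right-hand side diverges, hence so does $\sum_{n \geq N_1} (\norm{u_n}^2 + \norm{u_{n+1}}^2)$. Since this last sum is bounded above by $2 \sum_{n \geq N_1} \norm{u_n}^2$, the series $\sum_n \norm{u_n}^2$ must diverge as well, i.e. $u \notin \ell^2(\NN; \calH)$. I do not expect a genuine obstacle here: all the substance has already been placed in Proposition~\ref{prop:1}, and the corollary is a one-line summation argument. The only point worth flagging is that it is precisely the \emph{upper} bound of that proposition---not the lower one---that delivers the required \emph{lower} bound on the eigenvector norms.
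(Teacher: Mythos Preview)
Your argument is correct and matches the paper's proof essentially line for line: both use the upper bound from Proposition~\ref{prop:1} to convert $S_n(\alpha,\lambda)\geq s>0$ into $\norm{u_n}^2+\norm{u_{n+1}}^2\geq \tfrac{s}{c_2}\,\norm{\alpha_n a_n^*}^{-1}$ and then invoke the divergence hypothesis. Your write-up is in fact slightly more careful about the choice of the starting index and the passage from $\sum(\norm{u_n}^2+\norm{u_{n+1}}^2)$ to $\sum\norm{u_n}^2$.
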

	\begin{proof}
		By Proposition~\ref{prop:1}
		\[
			\frac{S_n(\alpha, \lambda)}{c_2 \norm{\alpha_n a_n^*}} \leq \norm{u_n}^2 + \norm{u_{n+1}}^2 
		\]
		for a~positive constant $c_2$. Therefore, there exists a~constant $c>0$ such that
		\[
			\frac{c}{\norm{\alpha_n a_n^*}} \leq \norm{u_n}^2 + \norm{u_{n+1}}^2,
		\]
		which cannot be summable.
	\end{proof}
	
	The following Lemma is the main algebraic part of the proof of Theorem~\ref{thm:A}.
	\begin{lemma} \label{lem:2}
		Let $u$ be a~generalised eigenvector associated with $\lambda \in \RR$ and 
		$\alpha \in \calH \oplus \calH$. Then
		\begin{multline*}
			\frac{[S_{n+1}(\alpha, \lambda) - S_n(\alpha, \lambda)]^-}{\norm{u_n}^2 + \norm{u_{n+1}}^2} 
			\leq 
			\norm{\sym{\alpha_{n+1} a_{n+1}^* - a_{n}^* a_{n-1}^{-1} \alpha_{n-1} a_n}^-} \\
			+ 
			|\lambda| \norm{a_{n-1}^{-1} \alpha_{n-1} a_n - \alpha_n} +
			\norm{\alpha_n b_{n+1} - b_n a_{n-1}^{-1} \alpha_{n-1} a_n}.
		\end{multline*}
	\end{lemma}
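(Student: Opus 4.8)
The plan is to write both $S_{n+1}(\alpha,\lambda)$ and $S_n(\alpha,\lambda)$ as quadratic forms evaluated at the \emph{same} vector $w = (u_n, u_{n+1})^t$ and then simply read off their difference. For $S_{n+1}$ I would use the defining formula \eqref{eq:8} with the index shifted by one,
\[
	S_{n+1}(\alpha,\lambda) = \left\langle \sym{\begin{pmatrix} \alpha_n a_n^* & -\alpha_n(\lambda\Id - b_{n+1}) \\ 0 & \alpha_{n+1} a_{n+1}^* \end{pmatrix}} w, w\right\rangle,
\]
while for $S_n$ I would use the alternative representation from Proposition~\ref{prop:4}, which expresses $S_n$ through the very same pair $(u_n,u_{n+1})^t$. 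Since by \eqref{eq:11} the real part $\sym{\cdot}$ is linear, subtracting the two expressions gives $S_{n+1}(\alpha,\lambda) - S_n(\alpha,\lambda) = \langle \sym{M_n} w, w\rangle$, where $M_n$ is the difference of the two underlying (non-symmetrised) block matrices.

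Next I would compute $M_n$ blockwise. The $(1,1)$ blocks are both $\alpha_n a_n^*$ and cancel, and both $(2,1)$ blocks vanish, so only the upper-right and lower-right blocks survive:
\[
	M_n = \begin{pmatrix} 0 & P_n \\ 0 & R_n\end{pmatrix}, \qquad R_n = \alpha_{n+1}a_{n+1}^* - a_n^* a_{n-1}^{-1}\alpha_{n-1} a_n.
\]
Writing $D_n = a_{n-1}^{-1}\alpha_{n-1}a_n$ and expanding the upper-right block $-\alpha_n(\lambda\Id - b_{n+1}) + (\lambda\Id - b_n)D_n$, it reassembles as $P_n = \lambda(D_n - \alpha_n) + (\alpha_n b_{n+1} - b_n D_n)$, i.e.\ precisely the two commutator-type quantities occurring in assumptions \eqref{thm:A:eq:2} and \eqref{thm:A:eq:3} of Theorem~\ref{thm:A}.

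To pass to the stated inequality I would first note that for any self-adjoint $T \in \calB(\calH\oplus\calH)$ and any $w$ one has $[\langle Tw, w\rangle]^- \le \norm{T^-}\,\norm{w}^2$, which follows from $\langle Tw,w\rangle \ge -\langle T^- w, w\rangle \ge -\norm{T^-}\norm{w}^2$. Applied to $T = \sym{M_n}$ and $w=(u_n,u_{n+1})^t$ this yields
\[
	\frac{[S_{n+1}(\alpha,\lambda) - S_n(\alpha,\lambda)]^-}{\norm{u_n}^2 + \norm{u_{n+1}}^2} \le \norm{\sym{M_n}^-}.
\]
I would then split $M_n$ into its diagonal part $\bigl(\begin{smallmatrix}0 & 0 \\ 0 & R_n\end{smallmatrix}\bigr)$ and its strictly-upper part $\bigl(\begin{smallmatrix}0 & P_n \\ 0 & 0\end{smallmatrix}\bigr)$, use linearity of $\sym{\cdot}$ together with the subadditivity $\norm{(X+Y)^-} \le \norm{X^-} + \norm{Y^-}$ (valid for self-adjoint $X,Y$ since $\min\sigma(X+Y) \ge \min\sigma(X) + \min\sigma(Y)$). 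The diagonal part contributes exactly $\norm{\sym{R_n}^-} = \norm{\sym{\alpha_{n+1}a_{n+1}^* - a_n^* a_{n-1}^{-1}\alpha_{n-1}a_n}^-}$, the first term on the right-hand side. For the strictly-upper part I would bound its negative part by the full operator norm and apply \eqref{eq:12}, giving $\norm{\sym{\bigl(\begin{smallmatrix}0 & P_n \\ 0 & 0\end{smallmatrix}\bigr)}} \le \norm{P_n}$; the triangle inequality $\norm{P_n} \le |\lambda|\,\norm{D_n - \alpha_n} + \norm{\alpha_n b_{n+1} - b_n D_n}$ then produces the remaining two terms with the claimed coefficients.

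The genuinely delicate step is the algebraic one: bringing the two a~priori different block matrices (one obtained by shifting the definition, the other from Proposition~\ref{prop:4}) into forms in the common vector $(u_n,u_{n+1})^t$, and simplifying the upper-right block so that the parameters regroup into the three quantities that drive the summability hypotheses of Theorem~\ref{thm:A}. By contrast, the operator-theoretic facts invoked at the end—the estimate $[\langle Tw,w\rangle]^- \le \norm{T^-}\norm{w}^2$, subadditivity of the negative part, and $\norm{\sym{X}} \le \norm{X}$—are elementary and, crucially, require no local-compactness of $\calH$, which is exactly the obstruction flagged in the introduction.
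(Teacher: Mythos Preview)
Your proposal is correct and follows essentially the same route as the paper: both compute the same difference matrix (your $M_n$ is the paper's $C_n^\lambda$) by combining the shifted definition \eqref{eq:8} with Proposition~\ref{prop:4}, and then estimate. The only cosmetic difference is in the final step: the paper expands $\langle \sym{M_n} w, w\rangle$ directly into three scalar inner-product terms and applies Cauchy--Schwarz to the off-diagonal ones, whereas you stay at the operator level via $[\langle Tw,w\rangle]^- \le \norm{T^-}\norm{w}^2$ and subadditivity of the negative part; both give the same bound.
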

	\begin{proof}
		By Proposition~\ref{prop:4} and formula~\eqref{eq:8} we have
		\[
			S_{n+1}(\alpha, \lambda) - S_n(\alpha, \lambda) =
			\left\langle \sym{C_n^\lambda}
			\begin{pmatrix}
				u_{n} \\
				u_{n+1}
			\end{pmatrix},
			\begin{pmatrix}
				u_{n} \\
				u_{n+1}
			\end{pmatrix}			
			\right\rangle
		\]
		for
		\begin{multline*}
			C_n^\lambda = 
			\begin{pmatrix}
				\alpha_{n} a_{n}^* & -\alpha_{n} (\lambda \Id - b_{n+1}) \\
				0 & \alpha_{n+1} a_{n+1}^*
			\end{pmatrix} 
			-
			\begin{pmatrix}
				\alpha_n a_n^* & -(\lambda \Id - b_n) a_{n-1}^{-1} \alpha_{n-1} a_n \\
				0 & a_n^* a_{n-1}^{-1} \alpha_{n-1} a_n
			\end{pmatrix} \\
			=
			\begin{pmatrix}
				0 & (\lambda \Id - b_n) a_{n-1}^{-1} \alpha_{n-1} a_n - \alpha_n (\lambda \Id - b_{n+1}) \\
				0 & \alpha_{n+1} a_{n+1}^* - a_{n}^* a_{n-1}^{-1} \alpha_{n-1} a_n
			\end{pmatrix}.
		\end{multline*}
		Hence,
		\begin{multline*}
			S_{n+1}(\alpha, \lambda) - S_n(\alpha, \lambda) = 
			\langle \sym{\alpha_{n+1} a_{n+1}^* - a_{n}^* a_{n-1}^{-1} \alpha_{n-1} a_n} u_{n+1}, 
			u_{n+1} \rangle_\calH \\
			+ \lambda \Re \langle (a_{n-1}^{-1} \alpha_{n-1} a_n - \alpha_n) u_{n+1}, u_n \rangle_\calH
			+ \Re \langle (\alpha_n b_{n+1} - b_n a_{n-1}^{-1} \alpha_{n-1} a_n) u_{n+1}, u_n \rangle_\calH.
		\end{multline*}
		By the Schwarz inequality the result follows.
	\end{proof}

	We are ready to prove Theorem~\ref{thm:A}.
	\begin{proof}[Proof of Theorem~\ref{thm:A}]
		By virtue of Corollary~\ref{cor:1} and Proposition~\ref{prop:3} it is enough to show that
		$\liminf_n S_n(\alpha, \lambda) > 0$ for every $\lambda \in \Lambda$ and a~non-zero 
		$\alpha \in \calH \oplus \calH$.
		
		Fix $\lambda \in \Lambda$ and a~non-zero $\alpha \in \calH \oplus \calH$.
		By Proposition~\ref{prop:1} there exists $N$ such that for every $n \geq N$ holds 
		$S_n(\alpha, \lambda) > 0$. 
		Let us define 
		\[
			F_n(\alpha, \lambda) = 
			\frac{S_{n+1}(\alpha, \lambda) - S_n(\alpha, \lambda)}{S_n(\alpha, \lambda)}.
		\]
		Then 
		\[
			\frac{S_{n+1}(\alpha, \lambda)}{S_n(\alpha, \lambda)} = 1 + F_n(\alpha, \lambda),
		\]
		and consequently,
		\[
			\frac{S_n(\alpha, \lambda)}{S_N(\alpha, \lambda)} = \prod_{k=N}^{n-1} (1 + F_k(\alpha, \lambda)).
		\]
		Hence,
		\begin{equation} \label{eq:10}
			\sum_{k=N}^\infty [F_k(\alpha, \lambda)]^- < \infty
		\end{equation}
		implies $\liminf_n S_n(\alpha, \lambda) > 0$. By Proposition~\ref{prop:1}
		\[
			S_n(\alpha, \lambda) \geq c^{-1} \norm{\alpha_n a_n^*} (\norm{u_n}^2 + \norm{u_{n+1}^2})
		\]
		for some constant $c>0$. Hence, by Lemma~\ref{lem:2}
		\begin{multline*}
			[F_n(\alpha, \lambda)]^- \leq \frac{c}{\norm{\alpha_n a_n^*}} 
			(\norm{\sym{\alpha_{n+1} a_{n+1}^* - a_{n}^* a_{n-1}^{-1} \alpha_{n-1} a_n}^-} \\
			+ 
			|\lambda| \norm{a_{n-1}^{-1} \alpha_{n-1} a_n - \alpha_n} +
			\norm{\alpha_n b_{n+1} - b_n a_{n-1}^{-1} \alpha_{n-1} a_n}),
		\end{multline*}
		which is summable by assumptions \eqref{thm:A:eq:1}, \eqref{thm:A:eq:2} and \eqref{thm:A:eq:3}. 
		This shows \eqref{eq:10}. The proof is complete.
	\end{proof}

\section{Special cases of Theorem~\ref{thm:A}} \label{sec:specialCases}
In this section we show several choices of the sequence $(\alpha_n : n \geq 0)$. In this way
we show the flexibility of our approach. For the simplification of the condition for $C(\lambda)$
we assume that the sequence $(a_n : n \geq 0)$ tends to infinity, i.e.
\[
	\lim_{n \rightarrow \infty} \norm{a_n^{-1}} = 0.
\]
This condition implies that $C(\lambda)$ does not depend on $\lambda$.

The first theorem is an extension of \cite[Theorem 1.6]{JanasMoszynski2002} to the operator case. 
Since Section~\ref{sec:TuranDet} is devoted to the proof of a~far reaching extension of this result,
we omit the details.
\begin{theorem} \label{thm:spec:1}
	Assume
	\begin{enumerate}[(a)]
		\item $\begin{aligned}[b]
			\sum_{n=1}^\infty \norm{a_{n+1}^* a_n^{-1} - a_n^* a_{n-1}^{-1}} < \infty,
		\end{aligned}$ \label{thm:spec:1:eq:1}
		
		\item $\begin{aligned}[b]
			\sum_{n=1}^\infty \norm{a_{n-1}^{-1} - a_n^{-1}} < \infty,
		\end{aligned}$ \label{thm:spec:1:eq:2}
		
		\item $\begin{aligned}[b]
			\sum_{n=0}^\infty \norm{b_{n+1} a_n^{-1} - b_n a_{n-1}^{-1}} < \infty,
		\end{aligned}$ \label{thm:spec:1:eq:3}
		
		\item $\begin{aligned}[b]
			\sum_{n=0}^\infty \frac{1}{\norm{a_n}} = \infty
		\end{aligned}$ \label{thm:spec:1:eq:4}
	\end{enumerate}
	and $C(\lambda)$ defined for $\alpha_n \equiv \Id$ is a~positive operator on $\calH \oplus \calH$. 
	Then the assumptions of Theorem~\ref{thm:A} are satisfied.
\end{theorem}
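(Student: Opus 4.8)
The plan is to apply Theorem~\ref{thm:A} with the constant choice $\alpha_n \equiv \Id$, for which $\norm{\alpha_n a_n^*} = \norm{a_n^*} = \norm{a_n}$ throughout. With this substitution the divergence condition \eqref{thm:A:eq:4} becomes \emph{verbatim} \eqref{thm:spec:1:eq:4}, and the limiting operator $C(\lambda)$ of Theorem~\ref{thm:A} is exactly the one assumed positive in the hypothesis; moreover the standing assumption $\lim_n \norm{a_n^{-1}} = 0$ makes it independent of $\lambda$, since the only $\lambda$-dependent contribution is $\lambda\, a_{n-1}^{-1}(a_n/\norm{a_n})$, whose norm is at most $\abs{\lambda}\,\norm{a_{n-1}^{-1}} \to 0$. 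It therefore remains only to deduce the three summability conditions \eqref{thm:A:eq:1}--\eqref{thm:A:eq:3} from \eqref{thm:spec:1:eq:1}--\eqref{thm:spec:1:eq:3}.

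The key observation is that, for $\alpha_n \equiv \Id$, each numerator appearing in Theorem~\ref{thm:A} factors with a copy of $a_n$ pulled out on the right:
\[
	a_{n+1}^* - a_n^* a_{n-1}^{-1} a_n = \big(a_{n+1}^* a_n^{-1} - a_n^* a_{n-1}^{-1}\big)\, a_n,
\]
\[
	a_{n-1}^{-1} a_n - \Id = \big(a_{n-1}^{-1} - a_n^{-1}\big)\, a_n,
\]
\[
	b_{n+1} - b_n a_{n-1}^{-1} a_n = \big(b_{n+1} a_n^{-1} - b_n a_{n-1}^{-1}\big)\, a_n.
\]
For \eqref{thm:A:eq:1} I would combine \eqref{eq:12} with the elementary bound $\norm{X^-} \le \norm{X}$ (valid for self-adjoint $X$ by the spectral theorem) to obtain
\[
	\norm{\sym{a_{n+1}^* - a_n^* a_{n-1}^{-1} a_n}^-} \le \norm{a_{n+1}^* - a_n^* a_{n-1}^{-1} a_n} \le \norm{a_{n+1}^* a_n^{-1} - a_n^* a_{n-1}^{-1}}\,\norm{a_n},
\]
so that dividing by $\norm{\alpha_n a_n^*} = \norm{a_n}$ dominates the summand of \eqref{thm:A:eq:1} by that of \eqref{thm:spec:1:eq:1}. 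The other two conditions are identical in spirit: the factorizations give $\norm{a_{n-1}^{-1} a_n - \Id} \le \norm{a_{n-1}^{-1} - a_n^{-1}}\,\norm{a_n}$ and $\norm{b_{n+1} - b_n a_{n-1}^{-1} a_n} \le \norm{b_{n+1} a_n^{-1} - b_n a_{n-1}^{-1}}\,\norm{a_n}$ by submultiplicativity of the operator norm, and dividing by $\norm{a_n}$ reduces \eqref{thm:A:eq:2} and \eqref{thm:A:eq:3} to \eqref{thm:spec:1:eq:2} and \eqref{thm:spec:1:eq:3} respectively.

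The only genuine content is spotting these three factorizations, which convert the ``left-normalised'' differences of Theorem~\ref{thm:A} (with $a_n$ on the right) into the ``right-normalised'' differences of the present statement (with the inverses on the right); everything else is submultiplicativity together with \eqref{eq:12}. The one point deserving care is the passage from the positivity assumed here to the \emph{strict} positivity of $C(\lambda)$ demanded by Theorem~\ref{thm:A}: this is precisely why I first record that $C(\lambda)$ is independent of $\lambda$ under $\lim_n \norm{a_n^{-1}} = 0$, so that strict positivity of the single limiting operator $C$ yields $\Lambda = \RR$ and hence the full conclusion of Theorem~\ref{thm:A}.
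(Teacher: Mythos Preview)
Your argument is correct: the three factorizations are exactly what is needed, and the norm estimates that follow are routine. In fact the paper does not give a proof of this statement at all---it writes ``Since Section~\ref{sec:TuranDet} is devoted to the proof of a far reaching extension of this result, we omit the details''---so your direct verification is precisely the omitted computation. One small remark: the conclusion of the statement is only that the hypotheses \eqref{thm:A:eq:1}--\eqref{thm:A:eq:4} of Theorem~\ref{thm:A} hold, so your final paragraph about strict positivity of $C(\lambda)$ and $\Lambda=\RR$ is commentary rather than part of the proof (the set $\Lambda$ in Theorem~\ref{thm:A} is defined afterwards and may in principle be empty); but it is a useful observation and is consistent with the standing assumption of Section~\ref{sec:specialCases}.
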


We are ready to prove Theorem~\ref{thm:spec:2}. Let us note that this result is a~vector valued version 
of \cite[Theorem 4.3]{Swiderski2016}. In the scalar case it has far reaching applications 
(see \cite[Section 5]{Swiderski2016}).

\begin{proof}[Proof of Theorem~\ref{thm:spec:2}]
	Take $\alpha_n = a_n$. It is sufficient to show that $\Lambda = \RR$. We have
	\[
		C(\lambda) = 
		\lim_{n \rightarrow \infty} \frac{1}{\norm{a_n a_n^*}} 
	 	\sym{
			\begin{pmatrix}
				a_n a_{n}^* & -(\lambda \Id - b_n) (a_n^*)^{-1} a_n^* a_n \\
				0 & a_n^* a_n
			\end{pmatrix}
		}
		= \sym{
		\begin{pmatrix}
			C C^* & 0 \\
			0 & C^* C
		\end{pmatrix}
		},
	\]
	which is clearly positive for $\lambda \in \RR$. Hence, $\Lambda = \RR$.
\end{proof}

To formulate the last example we need a~definition. Let
\begin{equation} \label{eq:30}
	\log^{(0)}(x) = x, \qquad \log^{(i+1)}(x) = \log(\log^{(i)}(x)) \quad (i \geq 0)
\end{equation}
and
\[
	g_j(x) = \prod_{i=1}^j \log^{(i)}(x).
\]
The following Theorem is a~vector valued version of \cite[Theorem 4.3]{Swiderski2016} and its proof is 
inspired by the techniques employed in the proof of \cite[Theorem 3]{Janas2014}.
\begin{theorem} \label{thm:spec:3}
	Assume that for positive integers $K, N$ and a~non-negative summable sequence~$c_n$
	\begin{enumerate}[(a)]
    	\item $\begin{aligned}[b]
			\lim_{n \rightarrow \infty} a_n^{-1} = 0,
		\end{aligned}$ \label{thm:spec:3:eq:1}
		
		\item $\begin{aligned}[b]
			(1 - c_n) \Id \leq | (a_{n-1}^*)^{-1} a_n | \leq \left( 1 + \frac{1}{n} + 
			\sum_{j=1}^K \frac{1}{n g_j(n)} + c_n \right) \Id \quad
		\end{aligned}$ 
		for $n > N$, \label{thm:spec:3:eq:2}
		
		\item the sequence $(b_n : n \geq 0)$ is bounded and 
		$\begin{aligned}[b]
			\sum_{n=0}^\infty \norm{a_n^{-1} b_n - b_{n+1} a_n^{-1} } < \infty,
		\end{aligned}$ \label{thm:spec:3:eq:3}
		
		\item $\begin{aligned}[b]
			\sum_{n=1}^\infty \frac{\lVert a_n^{-1} \rVert }{n} < \infty.
		\end{aligned}$	\label{thm:spec:3:eq:4}
    \end{enumerate}
    Then the assumptions of Theorem~\ref{thm:A} are satisfied with $\Lambda = \RR$.
\end{theorem}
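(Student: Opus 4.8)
The plan is to apply Theorem~\ref{thm:A} with the parameter sequence
\[
	\alpha_n = \rho_n (a_n^*)^{-1}, \qquad
	\rho_n = \prod_{k=N+1}^{n} \lrparen{1 + \varepsilon_k}, \qquad
	\varepsilon_k = \frac{1}{k} + \sum_{j=1}^{K} \frac{1}{k\, g_j(k)} + c_k,
\]
where $\rho_n = 1$ for $n \leq N$ and $\varepsilon_k$ is exactly the upper-bound exponent from assumption~\eqref{thm:spec:3:eq:2}. The whole point of this normalisation is that $\alpha_n a_n^* = \rho_n \Id$, so $\norm{\alpha_n a_n^*} = \rho_n$ is scalar, and, writing $Y_n = (a_{n-1}^*)^{-1} a_n$ so that $\abs{Y_n} = \abs{(a_{n-1}^*)^{-1} a_n}$ is the operator squeezed near $\Id$ in~\eqref{thm:spec:3:eq:2}, every expression in the hypotheses of Theorem~\ref{thm:A} reduces to a scalar-weighted function of $\abs{Y_n}^2 = Y_n^* Y_n$. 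I would first record the two cancellation identities $a_n^* a_{n-1}^{-1} \alpha_{n-1} a_n = Y_n^* (\rho_{n-1}\Id) Y_n = \rho_{n-1}\abs{Y_n}^2$ and $a_{n-1}^{-1}\alpha_{n-1} a_n = \rho_{n-1}(a_{n-1}a_{n-1}^*)^{-1} a_n = \rho_{n-1}(a_n^*)^{-1}\abs{Y_n}^2$, the second one following from $a_n^*(a_{n-1}a_{n-1}^*)^{-1} a_n = \abs{Y_n}^2$ and invertibility of $a_n$.

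For condition~\eqref{thm:A:eq:4} I would estimate $\log \rho_n = \sum_{k} \log(1+\varepsilon_k) \leq \sum_k \varepsilon_k \leq \log\lrparen{n\, g_K(n)} + O(1)$, using that $\sum_{k\le n} 1/k \approx \log n$ and $\sum_{k \le n} 1/(k g_j(k)) \approx \log^{(j+1)}(n)$ together with $\sum c_k < \infty$; hence $\rho_n \lesssim n\, g_K(n)$ and $\sum_n \rho_n^{-1} \geq c \sum_n (n\, g_K(n))^{-1} = \infty$, the last being the classical divergent Bertrand series. For condition~\eqref{thm:A:eq:1} the first identity shows the operator inside is $\rho_{n+1}\Id - \rho_{n-1}\abs{Y_n}^2$, which is self-adjoint, so $\sym{\cdot}^-$ is its negative part; since $\abs{Y_n}^2 \leq (1+\varepsilon_n)^2 \Id$ and $\rho_{n+1}/\rho_{n-1} = (1+\varepsilon_n)(1+\varepsilon_{n+1})$, its norm divided by $\rho_n$ is at most $(\varepsilon_n - \varepsilon_{n+1})^+$, and $\sum_n (\varepsilon_n - \varepsilon_{n+1})^+ < \infty$ because $\varepsilon_n$ splits into an eventually monotone part (telescoping) plus the summable tail $c_n$.

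Conditions~\eqref{thm:A:eq:2} and~\eqref{thm:A:eq:3} are where the second identity pays off: it lets me factor out $(a_n^*)^{-1}$, turning the numerator of~\eqref{thm:A:eq:2} into $\norm{(a_n^*)^{-1}\lrparen{\rho_{n-1}\abs{Y_n}^2 - \rho_n \Id}} \leq \norm{a_n^{-1}}\, \rho_{n-1}\, \norm{\abs{Y_n}^2 - (1+\varepsilon_n)\Id}$. The two-sided bound~\eqref{thm:spec:3:eq:2} gives $\norm{\abs{Y_n}^2 - (1+\varepsilon_n)\Id} \lesssim \varepsilon_n + c_n \lesssim 1/n + c_n$, so after dividing by $\rho_n \approx \rho_{n-1}$ the term is $\lesssim \norm{a_n^{-1}}/n + \norm{a_n^{-1}} c_n$, summable by~\eqref{thm:spec:3:eq:4} and boundedness of $\norm{a_n^{-1}}$. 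For~\eqref{thm:A:eq:3} the same factorisation plus~\eqref{thm:spec:3:eq:3} (using that $(a_n^*)^{-1} b_{n+1} - b_n (a_n^*)^{-1}$ is the adjoint of $a_n^{-1} b_n - b_{n+1} a_n^{-1}$, as $b_n$ is self-adjoint) and boundedness of $(b_n)$ handles the commutator part, the remaining part being controlled exactly as in~\eqref{thm:A:eq:2}. Finally, since $\norm{a_n^{-1}} \to 0$ by~\eqref{thm:spec:3:eq:1}, in the limit defining $C(\lambda)$ the off-diagonal block and the $(a_n^*)^{-1}$-factors vanish while $(\rho_{n-1}/\rho_n)\abs{Y_n}^2 \to \Id$, so $C(\lambda) = \sym{\Id} = \Id$ is strictly positive for every $\lambda \in \RR$; thus $\Lambda = \RR$.

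The main obstacle is getting the weight $\rho_n$ calibrated just right: it must grow fast enough that the $1/n$-sized deviations in~\eqref{thm:spec:3:eq:2} become summable after division by $\rho_n$, yet slowly enough that $\sum \rho_n^{-1}$ still diverges. The balance is provided by the boundary series $\sum (n\, g_K(n))^{-1} = \infty$, and the technical heart of the argument is the telescoping estimate for~\eqref{thm:A:eq:1} together with the cancellation identity that extracts the factor $\norm{a_n^{-1}}$ in~\eqref{thm:A:eq:2} and~\eqref{thm:A:eq:3}; without that extraction one would be left with $\sum \varepsilon_n \sim \sum 1/n$, which diverges.
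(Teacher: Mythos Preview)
Your argument is correct and shares the paper's core idea: both take $\alpha_n = s_n (a_n^*)^{-1}$ with a scalar weight $s_n$, so that $\alpha_n a_n^* = s_n \Id$, reduce condition~(c) of Theorem~\ref{thm:A} to condition~(b) via the splitting $\alpha_n b_{n+1} - b_n a_{n-1}^{-1}\alpha_{n-1}a_n = (\alpha_n b_{n+1} - b_n\alpha_n) + b_n(\alpha_n - a_{n-1}^{-1}\alpha_{n-1}a_n)$, and handle condition~(b) by extracting a factor $\norm{a_n^{-1}}$ and invoking hypothesis~\eqref{thm:spec:3:eq:4}. The difference lies in the choice of $s_n$ and the resulting bookkeeping. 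The paper takes $s_n = n\,g_K(n)$; for condition~(a) this leads to the expression $\tfrac{(n+1)g_K(n+1)}{n g_K(n)} - \tfrac{(n-1)g_K(n-1)}{n g_K(n)}(1+\varepsilon_n)^2$, whose negative part is bounded by appealing to an estimate from \cite{Swiderski2016}, and for condition~(b) it factors out $a_{n-1}^{-1}$, leaving a non-self-adjoint remainder $T_n$ whose norm is controlled via $T_nT_n^*$ and the spectral theorem. Your product weight $\rho_n = \prod_{k\le n}(1+\varepsilon_k)$ is calibrated so that $\rho_{n+1}/\rho_{n-1} = (1+\varepsilon_n)(1+\varepsilon_{n+1})$ exactly, which collapses condition~(a) to the elementary telescoping bound $\sum(\varepsilon_n-\varepsilon_{n+1})^+<\infty$; and your factorisation through $(a_n^*)^{-1}$ leaves the self-adjoint remainder $\rho_{n-1}\abs{Y_n}^2 - \rho_n\Id$, whose norm is read off directly from hypothesis~\eqref{thm:spec:3:eq:2} without squaring. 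Since $\log\rho_n = \log(n\,g_K(n)) + O(1)$, the two weights are interchangeable for condition~(d); your variant buys a self-contained and somewhat shorter verification.
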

\begin{proof}
	We can assume that $\log^{(K)}(N) > 0$. Let
    \[
        \alpha_n = 
		\begin{cases}
			\Id & \text{ for } n < N, \\
			n g_K(n) (a_n^*)^{-1} & \text{ otherwise}
		\end{cases}
	\]
	We have to compute the set $\Lambda$ and check the assumptions \eqref{thm:A:eq:1}, 
	\eqref{thm:A:eq:2}, \eqref{thm:A:eq:3} of Theorem~\ref{thm:A}.
	
	Let us begin with the computation of $\Lambda$. We have
	\begin{multline*}
		\frac{1}{\norm{\alpha_n a_n^*}} 
			\begin{pmatrix}
				\alpha_n a_{n}^* & -(\lambda \Id - b_n) a_{n-1}^{-1} \alpha_{n-1} a_n \\
				0 & a_n^* a_{n-1}^{-1} \alpha_{n-1} a_n
			\end{pmatrix}
		 \\ 
		=
			\begin{pmatrix}
				\Id & -\frac{(n-1) g_K(n-1)}{n g_K(n)} (\lambda \Id - b_n) 
					(a_n^*)^{-1} |(a_{n-1}^*)^{-1} a_n|^2 \\
				0 & \frac{(n-1) g_K(n-1)}{n g_K(n)} |(a_{n-1}^*)^{-1} a_n|^2
			\end{pmatrix}
	\end{multline*}
	which by the hypotheses \eqref{thm:spec:3:eq:1} and \eqref{thm:spec:3:eq:2} tends to
	\[
		\begin{pmatrix}
			\Id & 0 \\
			0 & \Id
		\end{pmatrix},
	\]
	which is clearly a positive operator on $\calH \oplus \calH$ for any $\lambda \in \RR$. Hence, $\Lambda = \RR$.
	
	Let us show the assumption \eqref{thm:A:eq:1}. We have
	\begin{multline*}
		\frac{\alpha_{n+1} a_{n+1}^* - a_n^* a_{n-1}^{-1} \alpha_{n-1} a_n}{\norm{\alpha_n a_n^*}} =
        \frac{(n+1) g_K(n+1)}{n g_K(n)} \Id 
        - \frac{(n-1) g_K(n-1)}{n g_K(n)} | (a_{n-1}^*)^{-1} a_n |^2 \\ \geq 
        \left( \frac{(n+1) g_K(n+1)}{n g_K(n)} - \frac{(n-1) g_K(n-1)}{n g_K(n)} \left( 1 + \frac{1}{n} 
        + \sum_{j=1}^K \frac{1}{n g_j(n)} + c_n \right)^2 \right) \Id.
	\end{multline*}
	The above expression has been estimated in the proof of \cite[Theorem~4.3]{Swiderski2016}.
	
	Next, since
	\[
		\alpha_n b_{n+1} - b_n a_{n-1}^{-1} \alpha_{n-1} a_n = \alpha_n b_{n+1} - b_{n} \alpha_n 
		+ b_n(\alpha_n - a_{n-1}^{-1} \alpha_{n-1} a_n),
	\]
	the hypothesis~\eqref{thm:spec:3:eq:3} implies that the assumption~\eqref{thm:A:eq:2} will be satisfied 
	if we show that the assumption~\eqref{thm:A:eq:3} holds.
	
	We have
	\[
		\frac{a_{n-1}^{-1} \alpha_{n-1} a_n - \alpha_n}{\norm{\alpha_n a_n^*}} = 
		\frac{(n-1) g_K(n-1)}{n g_K(n)} a_{n-1}^{-1} (a_{n-1}^*)^{-1} a_n - (a_n^*)^{-1} = 
		a_{n-1}^{-1} T_n,
	\]
	where
	\[
		T_n = \frac{(n-1) g_K(n-1)}{n g_K(n)} W_n^* - W_n^{-1}, \quad W_n = a_n^* a_{n-1}^{-1}.
	\]
	By virtue of the hypothesis \eqref{thm:spec:3:eq:4}, the assumption~\eqref{thm:A:eq:3} will be satisfied 
	as long as
	\begin{equation} \label{eq:5}
		\norm{T_n} \leq c \left( \frac{1}{n} + c_n' \right)
	\end{equation}
	for a~constant $c>0$ and a~non-negative summable sequence $(c_n' : n \geq 0)$. Because
	\[
		T_n T_n^* = \left( \frac{(n-1) g_K(n-1)}{n g_K(n)} \right)^2 W_n^* W_n 
		- 2 \frac{(n-1) g_K(n-1)}{n g_K(n)} \Id + (W_n^* W_n)^{-1},
	\]
	the non-negativity of $T_n T_n^*$ and $\norm{T_n T_n^*} = \norm{T_n}^2$, 
	the inequality \eqref{eq:5} will be satisfied if
	\[
		T_n T_n^* \leq c^2 \left( \frac{1}{n} + c_n' \right)^2 \Id.
	\]
	The spectral theorem applied to $W_n^* W_n$ implies that the above inequality will be satisfied if
	\begin{equation} \label{eq:6}
		\left( \frac{(n-1) g_K(n-1)}{n g_K(n)} \right)^2 \lambda_n 
		- 2 \frac{(n-1) g_K(n-1)}{n g_K(n)} + \lambda_n^{-1} \leq \left( \frac{1}{n} + c_n' \right)^2
	\end{equation}
	for every $\lambda_n \in \sigma(W_n^* W_n)$, which by the hypothesis~\eqref{thm:spec:3:eq:2} corresponds to
	\[
		\lambda_n \in 
		\left[ (1-c_n)^2, \left( 1 + \frac{1}{n} + \sum_{j=1}^K \frac{1}{n g_j(n)} + c_n \right)^2 \right].
	\]
	But
	\[
		\left( \frac{(n-1) g_K(n-1)}{n g_K(n)} \right)^2 \lambda_n 
		- 2 \frac{(n-1) g_K(n-1)}{n g_K(n)} 
		+ \lambda^{-1}_n = \left( \frac{(n-1) g_K(n-1)}{n g_K(n)} \sqrt{\lambda_n} 
		- \frac{1}{\sqrt{\lambda_n}} \right)^2
	\]
	and the above expression has been estimated in the proof of \cite[Theorem~4.3]{Swiderski2016}. 
	This shows \eqref{eq:6} and ends the proof.
\end{proof}

\section{Turán determinants} \label{sec:TuranDet}
	Let us note that for $\calH = \RR$ the expression $S_n$ for $\alpha_n \equiv \Id$ (see \eqref{eq:8}) 
	is known as the $N$-shifted \emph{Turán determinant} (see \cite{VanAssche1991}). Hence, 
	Theorem~\ref{thm:spec:1} motivates us to the following construction.
	Fix a~positive integer $N$ and a~Jacobi matrix $A$. Let us define a~sequence of quadratic 
	forms $Q^z$ on $\calH \oplus \calH$ by the formula
	\begin{equation} \label{eq:16}
		Q_n^z(v) = \frac{1}{\norm{a_{n+N-1}}} \left\langle 
		\sym{
			\begin{pmatrix}
				a_{n+N-1} & 0 \\
				0 & a_{n+N-1}^*
			\end{pmatrix}
			E X_n(z)
		} v, v \right\rangle,
	\end{equation}
	where
	\[
		X_n(z) = \prod_{j = n}^{n+N-1} B_j(z)
		\quad\text{and}\quad
	    E = 
	    \begin{pmatrix}
	    	0 & -\Id \\
	        \Id & 0
	    \end{pmatrix}.
	\]
	Then we define the $N$-shifted Turán determinants by
	\begin{equation} \label{eq:7}
		S_n(\alpha, z) = \norm{a_{n+N-1}} Q_n^z 
		\lrparen{
		\begin{pmatrix} 
			u_{n-1} \\
			u_n
		\end{pmatrix}},
	\end{equation}
	where $u$ is the generalised eigenvector corresponding to $z \in \CC$ such that 
	$(u_0, u_1)^t = \alpha \in \calH \oplus \calH$. 
	
	The rest of this section is devoted to the analysis of the sequence $S_n$. Since the proof
	of the uniform convergence of $S_n$ is quite involved, we divide it into 3 subsections. The method
	used here is an adaptation of the techniques employed in \cite{Swiderski2017}.

\subsection{Almost uniform non-degeneracy}
	Let $\Lambda$ be a~subset of $\CC$. In this section we consider the family
	$\{ Q^z : z \in \Lambda \}$ defined in \eqref{eq:16}.
	
	We say that $\{ Q^z : z \in \Lambda \}$ is \emph{uniformly non-degenerated} on 
	$K \subset \Lambda$ if there are $c \geq 1$ and $M \geq 1$ such that for all 
	$v \in \calH \oplus \calH$, $z \in K$ and $n \geq M$ 
	\[
		c^{-1} \norm{v}^2 \leq \abs{Q_n^z(v)} \leq c \norm{v}^2.
	\]
	We say that $\{Q^z : z \in \Lambda\}$ is \emph{almost uniformly non-degenerated} 
	on $\Lambda$ if it is uniformly non-degenerated on each compact subset of $\Lambda$.

	We begin with two simple auxiliary results which will be needed in the proof of the non-degeneracy of
	the considered quadratic forms.
	\begin{lemma} \label{lem:1}
		For every $n$ and $\lambda \in \RR$ one has
		\[
			\begin{pmatrix}
				a_n & 0 \\
				0 & a_n^*
			\end{pmatrix}
			E B_n(\lambda) 
			= 
			[B_n^{-1}(\lambda)]^* 
			\begin{pmatrix}
				a_{n-1} & 0 \\
				0 & a_{n-1}^*
			\end{pmatrix}
			E.
		\]
	\end{lemma}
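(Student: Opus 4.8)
The plan is to verify the matrix identity in Lemma~\ref{lem:1} by direct computation, using the explicit formulas for $B_n(\lambda)$ in \eqref{eq:13} and for $B_n^{-1}(\lambda)$ in \eqref{eq:2}. Since $\lambda \in \RR$, all the factors $(z\Id - b_n)$ simplify to the self-adjoint $(\lambda\Id - b_n)$, which is what lets the two sides match. The strategy is to compute the left-hand side and the right-hand side as explicit $2\times 2$ block matrices over $\calB(\calH)$ and check that the four blocks agree.

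First I would multiply out the left-hand side. We have
\[
	\begin{pmatrix} a_n & 0 \\ 0 & a_n^* \end{pmatrix}
	E
	=
	\begin{pmatrix} a_n & 0 \\ 0 & a_n^* \end{pmatrix}
	\begin{pmatrix} 0 & -\Id \\ \Id & 0 \end{pmatrix}
	=
	\begin{pmatrix} 0 & -a_n \\ a_n^* & 0 \end{pmatrix},
\]
and then multiply this by $B_n(\lambda)$ from \eqref{eq:13} to obtain a concrete block matrix whose entries involve $a_{n-1}^*$, $(\lambda\Id - b_n)$, and $a_n^*$. Next I would compute the right-hand side: from \eqref{eq:2} take the adjoint $[B_n^{-1}(\lambda)]^*$ (here $(\lambda\Id - b_n)^* = \lambda\Id - b_n$ since $\lambda$ is real and $b_n$ is self-adjoint), multiply on the right by $\mathrm{diag}(a_{n-1}, a_{n-1}^*)$, and finally apply $E$ on the right. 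Comparing the resulting four blocks with those from the left-hand side should give termwise equality.

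I do not expect any serious obstacle here; this is an essentially mechanical verification of a $2\times 2$ block identity, which is presumably why the intended proof is short. The only point demanding a little care is bookkeeping with adjoints and with the placement of the diagonal factors on the correct side, together with the observation that the reality of $\lambda$ and the self-adjointness of $b_n$ are exactly what make the $(\lambda\Id - b_n)$ terms survive the transposition. Once the blocks are written out explicitly, the agreement is immediate, so I would present the two computations side by side and conclude by inspection.
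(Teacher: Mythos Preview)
Your proposal is correct and follows exactly the paper's own approach: a direct block-matrix computation using \eqref{eq:13} and \eqref{eq:2}, showing both sides equal the common matrix
\[
\begin{pmatrix}
a_{n-1}^* & -(\lambda \Id - b_n) \\
0 & a_n^*
\end{pmatrix}.
\]
The only detail the paper leaves implicit, which you correctly identify, is that the reality of $\lambda$ and self-adjointness of $b_n$ are needed so that $(\lambda\Id - b_n)^* = \lambda\Id - b_n$ when taking the adjoint of $B_n^{-1}(\lambda)$.
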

	\begin{proof}
		Using \eqref{eq:2} and \eqref{eq:13} one can compute that both sides are equal to
		\[
			\begin{pmatrix}
				a_{n-1}^* & -(\lambda \Id - b_n) \\
				0 & a_n^*
			\end{pmatrix}
		\]
		and the result follows.
	\end{proof}
	
	\begin{proposition} \label{prop:6}
		Let $N$ be an integer. Assume
		\begin{enumerate}[(a)]
			\item $\begin{aligned}[b]
				\lim_{n \rightarrow \infty} \norm{a_n^{-1} a_{n-1}^* - R_n} = 0,
			\end{aligned}$
			\item $\begin{aligned}[b] 
				\lim_{n \rightarrow \infty} \left\| \frac{a_n}{\norm{a_n}} - C_n \right\| = 0.
			\end{aligned}$
		\end{enumerate}
		for $N$-periodic sequences of invertible operators $R$ and $C$. Then
		\[
			\lim_{n \rightarrow \infty} \left\| \frac{\norm{a_n}}{\norm{a_{n-1}}} \Id 
			- C_n^{-1} C_{n-1}^* R_n^{-1} \right\| = 0.
		\]
		In particular,
		\[
			\lim_{n \rightarrow \infty} \left| \frac{\norm{a_n}}{\norm{a_{n-1}}} - r_n \right| = 0,
		\]
		for a~positive $N$-periodic sequence
		\[
			r_n = \norm{C_n^{-1} C_{n-1}^* R_n^{-1}}.
		\]
	\end{proposition}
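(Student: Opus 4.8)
The plan is to factor each operator through its normalised part and then isolate the scalar ratio. Write $\widehat{a}_n = a_n / \norm{a_n}$ for the norm-one operators and $t_n = \norm{a_n}/\norm{a_{n-1}}$ for the scalar ratios we wish to control. Since each $a_n$ is invertible, so is each $\widehat{a}_n$, and one has the identity $a_n^{-1} a_{n-1}^* = t_n^{-1}\, \widehat{a}_n^{-1} \widehat{a}_{n-1}^*$. Hence hypothesis~(a) reads $t_n^{-1}\,\widehat{a}_n^{-1}\widehat{a}_{n-1}^* - R_n \to 0$; multiplying on the right by the bounded $N$-periodic operator $R_n^{-1}$ and putting $D_n = \widehat{a}_n^{-1}\widehat{a}_{n-1}^* R_n^{-1}$ turns this into
\[
	t_n^{-1} D_n - \Id \to 0.
\]

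Next I would identify the limit of $D_n$. From hypothesis~(b) we have $\widehat{a}_n - C_n \to 0$ with $C_n$ invertible, and since the $N$-periodic sequence $\norm{C_n^{-1}}$ takes only finitely many values it is bounded; continuity of inversion, through $\widehat{a}_n^{-1} - C_n^{-1} = \widehat{a}_n^{-1}(C_n - \widehat{a}_n) C_n^{-1}$, then gives $\widehat{a}_n^{-1} - C_n^{-1} \to 0$, and likewise $\widehat{a}_{n-1}^* - C_{n-1}^* \to 0$. As all these sequences are bounded, the products converge and $D_n - P_n \to 0$, where $P_n = C_n^{-1} C_{n-1}^* R_n^{-1}$ is $N$-periodic and invertible.

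It remains to pass from $t_n^{-1} D_n \to \Id$ to the assertion $t_n \Id - P_n \to 0$, and here lies the only real subtlety: a priori the scalar ratio $t_n$ is uncontrolled, so I must first bound it. Taking operator norms in $t_n^{-1} D_n \to \Id$ yields $t_n^{-1}\norm{D_n} \to 1$, and since $\norm{D_n} \to \norm{P_n} =: r_n$ is an $N$-periodic sequence bounded away from $0$ and $\infty$ (as $P_n$ is invertible with finitely many values), the ratios $t_n$ are eventually confined to a compact subinterval of $(0, \infty)$. With $t_n$ thus bounded, multiplying $t_n^{-1} D_n - \Id \to 0$ by $t_n$ gives $D_n - t_n \Id \to 0$; subtracting $D_n - P_n \to 0$ yields $t_n \Id - P_n \to 0$, which is the main claim. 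The ``in particular'' statement is then immediate from the reverse triangle inequality, $\abs{t_n - r_n} = \bigl| \norm{t_n \Id} - \norm{P_n} \bigr| \le \norm{t_n \Id - P_n}$.
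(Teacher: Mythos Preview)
Your proof is correct and rests on the same algebraic identity as the paper's, namely $a_n^{-1}a_{n-1}^* = t_n^{-1}\,\widehat a_n^{-1}\widehat a_{n-1}^*$. The paper, however, solves this identity for $t_n\Id$ rather than for $t_n^{-1}$: writing
\[
	\frac{\|a_n\|}{\|a_{n-1}\|}\,\Id \;=\; \Bigl(\tfrac{a_n}{\|a_n\|}\Bigr)^{-1}\,\frac{a_{n-1}^*}{\|a_{n-1}\|}\,\bigl(a_n^{-1}a_{n-1}^*\bigr)^{-1},
\]
each of the three factors on the right converges in norm (to $C_n^{-1}$, $C_{n-1}^*$, $R_n^{-1}$ respectively), so the product converges and the conclusion is immediate. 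This sidesteps your detour through bounding $t_n$ via norms and then multiplying back; your route is perfectly sound, just longer than necessary.
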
	
	\begin{proof}
		We have
		\[
			\frac{\norm{a_n}}{\norm{a_{n-1}}} \Id = \left( \frac{a_n}{\norm{a_n}} \right)^{-1} 
			\frac{a_{n-1}^*}{\norm{a_{n-1}}} (a_n^{-1} a_{n-1}^*)^{-1}.
		\]
		Hence, 
		\[
			\lim_{n \rightarrow \infty} \left\| \frac{\norm{a_n}}{\norm{a_{n-1}}} \Id 
			- C_n^{-1} C_{n-1}^* R_n^{-1} \right\| = 0
		\]
		and the result follows.
	\end{proof}
	
	In the next proposition we examine the limiting behaviour of the considered quadratic forms.
	\begin{proposition} \label{prop:5}
		Let $N \geq 1$ be an integer. Assume
   	\begin{enumerate}[(a)]
			\item $\begin{aligned}[b]
				\lim_{n \rightarrow \infty} \norm{a_n^{-1} - T_n} = 0,
			\end{aligned}$ \label{prop:5:eq:1}
			\item $\begin{aligned}[b]
				\lim_{n \rightarrow \infty} \norm{a_n^{-1} b_n - Q_n} = 0,
			\end{aligned}$ \label{prop:5:eq:2}
			\item $\begin{aligned}[b]
				\lim_{n \rightarrow \infty} \norm{a_n^{-1} a_{n-1}^* - R_n} = 0,
			\end{aligned}$ \label{prop:5:eq:3}
			\item $\begin{aligned}[b] 
				\lim_{n \rightarrow \infty} \left\| \frac{a_n}{\norm{a_n}} - C_n \right\| = 0.
			\end{aligned}$ \label{prop:5:eq:4}
		\end{enumerate}
		for $N$-periodic sequences $T, Q, R$ and $C$ such that for every $n$ the operators 
		$R_n$ and $C_n$ are invertible. Then on every compact subset of $\CC$ the sequence 
		$(\norm{X_n(\cdot)} : n \geq 0)$ is uniformly bounded. Moreover, 
		\begin{equation} \label{eq:29}
			\lim_{n \rightarrow \infty}
			\left\| 
				\begin{pmatrix}
					\frac{a_{n+N-1}}{\norm{a_{n+N-1}}} & 0 \\
					0 & \frac{a^*_{n+N-1}}{\norm{a_{n+N-1}}}
				\end{pmatrix}
				E X_n(\cdot) - \calF^n(\cdot)
			\right\|
			= 0
		\end{equation}
		uniformly on compact subsets of $\CC$, where
		\[
			\calF^n(z) =
			\begin{pmatrix}
				C_{n+N-1} & 0 \\
				0 & C_{n+N-1}^*
			\end{pmatrix}
			E
			\prod_{k=n}^{N+n-1}
			\begin{pmatrix}
				0 & \Id \\
				-R_k & z T_k - Q_k
			\end{pmatrix}.
		\]
	\end{proposition}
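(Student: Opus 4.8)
The plan is to recognise both expressions in \eqref{eq:29} as ordered products of $N+1$ operator-valued factors, to show that factor by factor the genuine product converges to the limiting one uniformly on compacta, and then to conclude by a standard telescoping estimate. Write
\[
	P_k(z) = \begin{pmatrix} 0 & \Id \\ -R_k & z T_k - Q_k \end{pmatrix}, \qquad
	D_m = \begin{pmatrix} \tfrac{a_m}{\norm{a_m}} & 0 \\ 0 & \tfrac{a_m^*}{\norm{a_m}} \end{pmatrix}, \qquad
	\Gamma_m = \begin{pmatrix} C_m & 0 \\ 0 & C_m^* \end{pmatrix},
\]
with the convention $P_j := P_{j \bmod N}$ (legitimate since $R,Q,T$ are $N$-periodic). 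With this notation the two expressions to be compared are $D_{n+N-1}\, E\, X_n(z) = D_{n+N-1}\, E\, B_{n+N-1}(z)\cdots B_n(z)$ and $\calF^n(z) = \Gamma_{n+N-1}\, E\, P_{n+N-1}(z)\cdots P_n(z)$. By \eqref{eq:13} the genuine transfer matrix differs from its limit only in the bottom row:
\[
	B_j(z) - P_j(z) = \begin{pmatrix} 0 & 0 \\ R_j - a_j^{-1} a_{j-1}^* & z(a_j^{-1} - T_j) - (a_j^{-1} b_j - Q_j) \end{pmatrix}.
\]

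First I would establish the uniform boundedness. Fix a compact $K \subset \CC$ and put $M = \sup_{z \in K} \abs{z}$. Since $(a_j^{-1})$, $(a_j^{-1} b_j)$ and $(a_j^{-1} a_{j-1}^*)$ are convergent they are bounded, so every entry of $B_j(z)$ is bounded uniformly in $j$; the only $z$-dependent entry is $a_j^{-1}(z\Id - b_j) = z\, a_j^{-1} - a_j^{-1} b_j$, of norm at most $M \sup_j \norm{a_j^{-1}} + \sup_j \norm{a_j^{-1} b_j}$. Hence there is a constant $L$ with $\norm{B_j(z)} \leq L$ for all $j$ and all $z \in K$, and therefore $\norm{X_n(z)} \leq L^N$ for all $n$ and $z \in K$, which proves the first assertion. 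The same estimates bound $\norm{P_k(z)}$, and since convergent sequences are bounded we may enlarge $L$ so that it also dominates $\norm{D_m}$ and $\norm{\Gamma_m}$.

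Next I would show that each factor converges uniformly on $K$. In the displayed formula for $B_j(z) - P_j(z)$ the only $z$-dependence is the term $z(a_j^{-1} - T_j)$, whose norm is at most $M \norm{a_j^{-1} - T_j}$; so by \eqref{prop:5:eq:1}, \eqref{prop:5:eq:2} and \eqref{prop:5:eq:3} we get $\sup_{z \in K} \norm{B_j(z) - P_j(z)} \to 0$ as $j \to \infty$, and by \eqref{prop:5:eq:4} $\norm{D_m - \Gamma_m} \to 0$. To compare the two products I then use the elementary identity
\[
	A_1 \cdots A_m - \tilde A_1 \cdots \tilde A_m = \sum_{k=1}^{m} \tilde A_1 \cdots \tilde A_{k-1}\, (A_k - \tilde A_k)\, A_{k+1} \cdots A_m,
\]
applied to the ordered products above (the middle factor $E$ is common to both, so it contributes a zero difference). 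Each summand is a product of uniformly bounded operators containing exactly one difference factor, either $D_{n+N-1} - \Gamma_{n+N-1}$ or some $B_j(z) - P_j(z)$, so its norm is at most $\norm{E}\, L^{N}$ times that difference. Since there are only $N+1$ summands and every difference tends to $0$ uniformly on $K$ as $n \to \infty$, \eqref{eq:29} follows uniformly on $K$, hence on every compact subset of $\CC$. The argument is routine operator bookkeeping; the only point demanding care is the uniformity in $z$, which holds precisely because $z$ enters each transfer matrix linearly while $K$ bounds $\abs{z}$.
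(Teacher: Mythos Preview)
Your proof is correct and follows essentially the same route as the paper's: the paper shows $\norm{B_j(z) - \calB_j(z)} \to 0$ uniformly on compacta (your $\calB_j = P_j$) and then asserts that consequently $\norm{X_n - \calX_n} \to 0$ and \eqref{eq:29} follows; your telescoping identity is exactly the justification the paper leaves implicit. The only difference is packaging: the paper first passes from $B_j \to \calB_j$ to $X_n \to \calX_n$ and then multiplies by the diagonal factor, whereas you treat all $N+1$ factors (plus the shared $E$) in one telescoping step, which is slightly cleaner.
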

	\begin{proof}
		Let us define
		\[
			\calX_n(z) = \prod_{j=n}^{n+N-1} \calB_j(z), \quad \text{where} \quad
			\calB_n(z) =
			\begin{pmatrix}
				0 & \Id \\
				-R_n & z T_n - Q_n
			\end{pmatrix}.
		\]	
		We have
		\[
			\norm{B_n(z) - \calB_n(z)} \leq \norm{R_n - a_n^{-1} a_{n-1}^*} 
			+ |z| \norm{a_n^{-1} - T_n} + \norm{Q_n - a_n^{-1} b_n},
		\]
		which tends to $0$ uniformly on compact subsets of $\CC$. Consequently, since every function 
		$B_n(\cdot)$ is continuous, one has
		\[
			\lim_{n \rightarrow \infty} \norm{X_n(\cdot) - \calX_n(\cdot)} = 0
		\]
		uniformly on the compact subsets of $\CC$. In particular, it implies \eqref{eq:29} and the uniform 
		boundedness of $(\norm{X_n(\cdot)} : n \geq 0)$ on every compact subset of $\CC$.
	\end{proof}

	Finally, in the last proposition, we formulate the conditions under which the sequence 
	$\{ Q^z : z \in \Lambda \}$ is almost uniformly non-degenerated.
	\begin{proposition} \label{prop:8}
		Let the assumptions of Proposition~\ref{prop:5} be satisfied. If for every $i \in \NN$ 
		and every $z \in \Lambda$ there is $\varepsilon(i, z) \in \{-1, 1\}$ such that
		\begin{equation} \label{eq:28}
			\varepsilon(i, z) \sym{\calF^i(z)} > 0,
		\end{equation}
		then $(Q^z : z \in \Lambda)$ is almost uniformly non-degenerated. Moreover,
		if $\Lambda \subset \RR$, then the same conclusion follows provided \eqref{eq:28} holds only for $i=0$.
	\end{proposition}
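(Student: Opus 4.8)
The plan is to push everything onto the limiting objects $\calF^n$ supplied by Proposition~\ref{prop:5} and then exploit their $N$-periodicity. First I would rewrite the quadratic form: since the scalar $\norm{a_{n+N-1}}^{-1}$ is real and positive it may be pulled inside $\sym{\cdot}$ by \eqref{eq:11}, so by \eqref{eq:16} one has $Q_n^z(v) = \sprod{H_n(z) v}{v}$ with
\[
H_n(z) = \sym{\begin{pmatrix} \frac{a_{n+N-1}}{\norm{a_{n+N-1}}} & 0 \\ 0 & \frac{a_{n+N-1}^*}{\norm{a_{n+N-1}}} \end{pmatrix} E X_n(z)}.
\]
Combining \eqref{eq:29} with \eqref{eq:12} gives $\lim_{n \to \infty} \norm{H_n(\cdot) - \sym{\calF^n(\cdot)}} = 0$ uniformly on compact subsets of $\CC$. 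The crucial point is that $\calF^n$ is $N$-periodic in $n$ (its ingredients $C$, $R$, $T$, $Q$ all are), so only the finitely many limit forms $G_i(z) := \sym{\calF^i(z)}$, $i \in \{0,\dots,N-1\}$, occur, and $\sym{\calF^n} = G_{n \bmod N}$. By periodicity the hypothesis \eqref{eq:28} for all $i \in \NN$ is the same as for these finitely many $i$.

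For the first assertion, fix a compact $K \subset \Lambda$. By \eqref{eq:28} each $G_i(z)$ is definite, hence invertible, for $z \in \Lambda$; the sign $\varepsilon(i,z)$ is irrelevant here because I only need the two-sided bound $m_i(z)\norm{v}^2 \le \abs{\sprod{G_i(z)v}{v}} \le \norm{G_i(z)}\norm{v}^2$ with $m_i(z) = \norm{G_i(z)^{-1}}^{-1} > 0$. As each $z \mapsto \calF^i(z)$ is a matrix polynomial in $z$, the maps $z \mapsto \norm{G_i(z)}$ and $z \mapsto \norm{G_i(z)^{-1}}$ are continuous on the invertibles, so $m := \min_i \inf_{z \in K} m_i(z) > 0$ and $M := \max_i \sup_{z\in K}\norm{G_i(z)} < \infty$ by compactness. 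Choosing $M_0$ with $\norm{H_n(z) - G_{n\bmod N}(z)} \le m/2$ for all $n \ge M_0$, $z \in K$, the reverse triangle inequality yields $\tfrac{m}{2}\norm{v}^2 \le \abs{Q_n^z(v)} \le (M + \tfrac{m}{2})\norm{v}^2$, which is uniform non-degeneracy on $K$. Since $K$ is arbitrary, $(Q^z : z \in \Lambda)$ is almost uniformly non-degenerated.

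For the \emph{moreover} part (with $\Lambda \subset \RR$) it suffices to show that for real $\lambda$ all the $G_i(\lambda)$ share the same inertia, so that definiteness of $G_0(\lambda)$ forces \eqref{eq:28} for every $i$ with $\varepsilon(i,\lambda) = \varepsilon(0,\lambda)$, reducing matters to the case already proved. I would obtain this from the limiting form of Lemma~\ref{lem:1}. Writing $D_n = \begin{pmatrix} C_n & 0 \\ 0 & C_n^* \end{pmatrix} E$, dividing the identity of Lemma~\ref{lem:1} by $\norm{a_n}$ and letting $n \to \infty$ along a fixed residue class modulo $N$, using \eqref{prop:5:eq:1}--\eqref{prop:5:eq:4} and Proposition~\ref{prop:6}, one gets, for $\lambda \in \RR$, the exact relation
\[
D_n \calB_n(\lambda) = \rho_n\, [\calB_n^{-1}(\lambda)]^*\, D_{n-1},
\]
where $\rho_n > 0$ is positive and $N$-periodic (the limit of $\norm{a_{n-1}}/\norm{a_n}$ by Proposition~\ref{prop:6}). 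Since $\prod_{k=n+1}^{n+N}\calB_k = \calB_n \big(\prod_{k=n}^{n+N-1}\calB_k\big)\calB_n^{-1}$ by $N$-periodicity and $D_{n+N-1} = D_{n-1}$, this propagates to $\calF^{n+1}(\lambda) = \rho_n [\calB_n^{-1}(\lambda)]^*\,\calF^n(\lambda)\,\calB_n^{-1}(\lambda)$, and taking real parts via \eqref{eq:9} (with the positive real scalar $\rho_n$) gives
\[
\sym{\calF^{n+1}(\lambda)} = \rho_n\, [\calB_n^{-1}(\lambda)]^*\,\sym{\calF^n(\lambda)}\,\calB_n^{-1}(\lambda).
\]
As $\rho_n > 0$ and $\calB_n(\lambda)$ is invertible (because $R_n$ is), this is a congruence up to a positive scalar and hence preserves definiteness together with its sign; induction on $i$ completes the reduction.

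I expect the main obstacle to be the bookkeeping in the last step: extracting the limiting congruence from Lemma~\ref{lem:1}, tracking the index shift so that $D_{n+N-1} = D_{n-1}$ and the scalar $\rho_n$, and checking that periodicity collapses the monodromy product into the stated similarity. The first two paragraphs are then routine once the uniform convergence $H_n \to \sym{\calF^{n \bmod N}}$ and the compactness argument are in place; the only delicate point there is reading \emph{strictly positive} as bounded below (equivalently, invertible), which is exactly what makes the uniform lower bound $m > 0$ available on the possibly infinite-dimensional space $\calH \oplus \calH$.
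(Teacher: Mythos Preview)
Your proposal is correct and follows essentially the same route as the paper: uniform convergence of the normalised forms to the $N$-periodic family $\sym{\calF^i}$ gives the first assertion, and for $\Lambda\subset\RR$ you reduce to $i=0$ via a congruence $\sym{\calF^{i+1}}=\rho_i[\calB_i^{-1}]^*\sym{\calF^i}\calB_i^{-1}$ obtained from Lemma~\ref{lem:1} in the limit, exactly as the paper does (the paper passes to the limit in the full two-step identity involving $X_n$ and $X_{n+1}$ rather than first deriving the limiting single-step relation $D_n\calB_n=\rho_n[\calB_n^{-1}]^*D_{n-1}$, but the outcome is the same congruence up to an index shift). Your compactness/continuity argument for the uniform lower bound is just an explicit unpacking of what the paper states in one line.
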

	\begin{proof}
		By \eqref{eq:29} and \eqref{eq:28} we have that for every compact $K \subset \Lambda$ there is 
		a constant $c>0$ such that for $n$ sufficiently large and all $z \in K$
		\[
			\varepsilon(i, z) 
			\sym{
			\begin{pmatrix}
					\frac{a_{n+N-1}}{\norm{a_{n+N-1}}} & 0 \\
					0 & \frac{a^*_{n+N-1}}{\norm{a_{n+N-1}}}
				\end{pmatrix}
				E X_n(z)
			}
			> c \Id.
		\]
		It implies the uniform non-degeneracy of $\{ Q^z : z \in K \}$.
		
		Consider $\lambda \in \RR$. According to Lemma~\ref{lem:1} we have
		\[
			\frac{\norm{a_{n+N}}}{\norm{a_{n+N-1}}}
			\begin{pmatrix}
				\frac{a_{n+N}}{\norm{a_{n+N}}} & 0 \\
				0 & \frac{a_{n+N}^*}{\norm{a_{n+N}}}
			\end{pmatrix}
			E X_{n+1}(\lambda)
			=
			[B_{n+N}^{-1}(\lambda)]^*
			\begin{pmatrix}
				\frac{a_{n+N-1}}{\norm{a_{n+N-1}}} & 0 \\
				0 & \frac{a_{n+N-1}^*}{\norm{a_{n+N-1}}}
			\end{pmatrix}
			E X_n(\lambda) B_n^{-1}(\lambda).
		\]
		Let $n = kN+i$ and let us compute the limit of both sides as $k$ tends to $\infty$. 
		By Propositions \ref{prop:6} and \ref{prop:5} we have
		\[
			r_i 
			\calF^i(\lambda)
			=
			[\calB_i^{-1}(\lambda)]^*
				\calF^{i-1}(\lambda)
			\calB_i^{-1}(\lambda),
		\]
		where
		\[
			\calB_i(\lambda) = 
			\begin{pmatrix}
				0 & \Id \\
				-R_i & \lambda T_i - Q_i
			\end{pmatrix}
		\]
		and the convergence is uniform on every compact subset of $\RR$. By \eqref{eq:9} it implies that
		if for some $\varepsilon(\lambda) \in \{ -1, 1 \}$
		\[
			\varepsilon(\lambda) \sym{\calF^0(\lambda)} > 0,
		\]
		then for every $j \in \{ 0, 1, \ldots, N-1 \}$
		\[
			\varepsilon(\lambda) \sym{\calF^j(\lambda)} > 0.
		\]
		The proof is complete.
	\end{proof}

\subsection{Asymptotics of generalised eigenvectors}
This section is devoted to show the implications of the non-degeneracy of $(Q^z : z \in \Lambda)$ 
together with the positivity of $|S_n|$ to the asymptotics of the generalised eigenvectors. 

\begin{theorem}
	\label{thm:1}
	Let the family $\{Q^z : z \in K\}$ defined in \eqref{eq:16} be uniformly 
	non-degenerated on a~compact set $K$. Suppose that there are
	$c \geq 1$ and $M' > 0$ such that for all $\alpha \in \calH \oplus \calH$ such that 
	$\norm{\alpha} = 1$, $z \in K$ and $n \geq M$ 
	\begin{equation}
		\label{eq:17}
		c^{-1} \leq \abs{S_n(\alpha, z)} \leq c.
	\end{equation}
	Then there is $c \geq 1$ such that for all $z \in K$, $n \geq 1$ and for every generalised
	eigenvector $u$ corresponding to $z$
	\[
		c^{-1} (\norm{u_0}^2 + \norm{u_1}^2) 
		\leq \norm{a_{n+N-1}} (\norm{u_{n-1}}^2 + \norm{u_n}^2) 
		\leq c (\norm{u_0}^2 + \norm{u_1}^2).
	\]
\end{theorem}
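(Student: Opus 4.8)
The plan is to combine the scaling behaviour of $S_n$ in the initial data with the uniform non-degeneracy of the forms $Q^z$, treating the finitely many small indices separately by a transfer-matrix estimate. First I would observe that $S_n$ is homogeneous of degree two in $\alpha$. Because the recurrence \eqref{eq:3} is linear, replacing $\alpha$ by $t\alpha$ (with $t \in \CC$) replaces the generalised eigenvector $u$ by $tu$; since the matrix of $Q_n^z$ is of the form $\sym{\cdots}$ and hence self-adjoint, formula \eqref{eq:7} gives $S_n(t\alpha, z) = \abs{t}^2 S_n(\alpha, z)$. Applying this with $t = \norm{\alpha}^{-1}$ upgrades the hypothesis \eqref{eq:17} from unit vectors to arbitrary non-zero $\alpha$, so that for some $c \geq 1$
\[
	c^{-1} \norm{\alpha}^2 \leq \abs{S_n(\alpha, z)} \leq c \norm{\alpha}^2, \qquad z \in K, \ n \geq M'.
\]
As $\norm{\alpha}^2 = \norm{u_0}^2 + \norm{u_1}^2$, this controls $\abs{S_n}$ two-sidedly by the initial data.

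Next I would translate the uniform non-degeneracy of $\{Q^z : z \in K\}$ into an estimate on $u$. Evaluating the defining inequality at $v = (u_{n-1}, u_n)^t$ and multiplying through by $\norm{a_{n+N-1}}$, formula \eqref{eq:7} yields
\[
	c^{-1} \norm{a_{n+N-1}} (\norm{u_{n-1}}^2 + \norm{u_n}^2) \leq \abs{S_n(\alpha, z)} \leq c \norm{a_{n+N-1}} (\norm{u_{n-1}}^2 + \norm{u_n}^2)
\]
for $z \in K$ and $n \geq M$. Chaining this with the two-sided bound on $\abs{S_n}$ from the previous step gives the asserted estimate, with an adjusted constant, for all $n \geq \max(M, M')$.

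It remains to cover the finitely many indices $1 \leq n < \max(M, M')$. Iterating \eqref{eq:1} gives $(u_{n-1}, u_n)^t = P_n(z)\, \alpha$, where $P_n(z) = \prod_{j=1}^{n-1} B_j(z)$ and each factor is invertible by \eqref{eq:2}. For each fixed $n$ the map $z \mapsto P_n(z)$ is continuous, so on the compact set $K$ the quantities $\norm{P_n(z)}$ and $\norm{P_n(z)^{-1}}$ are bounded uniformly in $z$; hence $\norm{u_{n-1}}^2 + \norm{u_n}^2$ is comparable to $\norm{\alpha}^2$, uniformly over $z \in K$, for each such $n$. Multiplying by the fixed positive number $\norm{a_{n+N-1}}$ and taking the worst constant over the finite index set $\{1, \ldots, \max(M, M') - 1\}$ handles this range, and the maximum of the two constants completes the proof.

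The computation itself is a sequence of substitutions; the only genuinely delicate point is the bookkeeping needed to produce a \emph{single} constant valid for all $n \geq 1$ and all $z \in K$ at once. This is exactly where compactness of $K$ enters, through the continuity of the finitely many transfer-matrix products $P_n$: without it the small-$n$ constants could degenerate as $z$ varies, and the uniform conclusion would fail.
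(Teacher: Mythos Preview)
Your proof is correct and follows essentially the same route as the paper: both exploit the degree-two homogeneity $S_n(\alpha/\norm{\alpha},z)=\norm{\alpha}^{-2}S_n(\alpha,z)$ to reduce to unit initial data, then chain the uniform non-degeneracy bound on $Q_n^z$ with the hypothesis \eqref{eq:17}. The paper's proof in fact stops once the estimate is established for $n \geq M$ and does not spell out the finitely many indices $1 \leq n < M$; your transfer-matrix argument, using continuity of the $B_j$ and compactness of $K$ to bound $\norm{P_n(z)}$ and $\norm{P_n(z)^{-1}}$ uniformly, is the natural completion and delivers the single constant valid for all $n \geq 1$ that the statement actually claims.
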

\begin{proof}
	Let $z \in K$ and let $u$ be a~generalised eigenvector corresponding to $z$ such that
	$(u_0, u_1)^t = \alpha$, $\norm{\alpha} = 1$. Since $\{Q^z : z \in K \}$ is uniformly 
	non-degenerated, there are $c \geq 1$ and $M \geq M'$ such that for all $n \geq M$
	\[
		c^{-1} \norm{a_{n+N-1}} (\norm{u_{n-1}}^2 + \norm{u_n}^2) 
		\leq 
		|S_n(\alpha, z)|
		\leq c \norm{a_{n+N-1}} (\norm{u_{n-1}}^2 + \norm{u_n}^2) ,
	\]
	which together with \eqref{eq:17} implies that there is $c \geq 1$ such that for all $n \geq M$
   \[
		c^{-1} \leq \norm{a_{n+N-1}} (\norm{u_{n-1}}^2 + \norm{u_n}^2)  \leq c.
  	\]
  	
  	For the general non-zero $\alpha$ we use the fact that
  	\[
  		S_n \left( \frac{\alpha}{\norm{\alpha}}, z \right) = 
  		\frac{1}{\norm{\alpha}^2} S_n(\alpha, z)
  	\]
  	and generalised eigenvectors depend linearly on the initial conditions.
\end{proof}
   
\begin{corollary} \label{cor:2}
	Suppose that the assumptions of Theorem~\ref{thm:1} are satisfied.
	Let $\Omega \subset \calH \oplus \calH \setminus \{ 0 \}$ be a~bounded closed set and
	let $K \subset \Lambda$ be a~compact set.
	Assume that for $N$-periodic sequence of self-adjoint operators $(D_n : n \geq 0)$
	\begin{equation} \label{eq:23}
		\lim_{n \rightarrow \infty} 
		\left\|
			\frac{1}{\norm{a_{n+N-1}}}
			\sym{
				\begin{pmatrix}
					a_{n+N-1} & 0 \\
					0 & a_{n+N-1}^*
				\end{pmatrix}
				E X_n(z)
			}
			-
			\begin{pmatrix}
				D_n & 0 \\
				0 & D_n
			\end{pmatrix}
		\right\|
		=
		0
	\end{equation}
	uniformly on $K$ and
	\[
		g(\alpha, z) 
		= 
		\lim_{n \rightarrow \infty} S_n(\alpha, z)
	\]
	uniformly on $\Omega \times K$.
	Then
	\[
		\lim_{n \rightarrow \infty}
			\norm{a_{n+N-1}} ( \sprod{D_n u_{n-1}}{u_{n-1}}_\calH + \sprod{D_n u_{n}}{u_{n}}_\calH) = g
	\]
	uniformly on $\Omega \times K$.
\end{corollary}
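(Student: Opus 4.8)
The plan is to compare the quantity in the conclusion directly with the Turán determinant $S_n(\alpha, z)$, whose uniform convergence to $g$ is part of the hypotheses. Abbreviate $v_n = (u_{n-1}, u_n)^t$ and write $\operatorname{diag}(D_n, D_n)$ for the block operator $\left(\begin{smallmatrix} D_n & 0 \\ 0 & D_n \end{smallmatrix}\right)$ on $\calH \oplus \calH$. By \eqref{eq:7} and \eqref{eq:16},
\[
    S_n(\alpha, z) = \norm{a_{n+N-1}} \sprod{M_n(z) v_n}{v_n},
    \qquad
    M_n(z) = \frac{1}{\norm{a_{n+N-1}}} \sym{
        \begin{pmatrix}
            a_{n+N-1} & 0 \\
            0 & a_{n+N-1}^*
        \end{pmatrix}
        E X_n(z)
    },
\]
while the expression whose limit we seek is exactly $\norm{a_{n+N-1}} \sprod{\operatorname{diag}(D_n, D_n) v_n}{v_n}$. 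First I would form the difference of these two quantities.

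The difference is controlled by the operator $M_n(z) - \operatorname{diag}(D_n, D_n)$: applying the Cauchy--Schwarz inequality to the corresponding quadratic form gives
\[
    \abs{\norm{a_{n+N-1}} \sprod{M_n(z) v_n}{v_n} - \norm{a_{n+N-1}} \sprod{\operatorname{diag}(D_n, D_n) v_n}{v_n}}
    \leq \norm{a_{n+N-1}} \norm{v_n}^2 \, \norm{M_n(z) - \operatorname{diag}(D_n, D_n)}.
\]
By hypothesis \eqref{eq:23} the last factor tends to $0$ uniformly on $K$, so it remains to bound $\norm{a_{n+N-1}} \norm{v_n}^2 = \norm{a_{n+N-1}}(\norm{u_{n-1}}^2 + \norm{u_n}^2)$ uniformly over $\Omega \times K$.

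This uniform bound is supplied by Theorem~\ref{thm:1}, whose assumptions hold here: there is $c \geq 1$ such that $\norm{a_{n+N-1}}(\norm{u_{n-1}}^2 + \norm{u_n}^2) \leq c(\norm{u_0}^2 + \norm{u_1}^2) = c \norm{\alpha}^2$ for all $z \in K$ and $n \geq 1$. Since $\Omega$ is bounded, $\norm{\alpha}^2 \leq R^2 := \sup_{\beta \in \Omega} \norm{\beta}^2 < \infty$, and hence $\norm{a_{n+N-1}} \norm{v_n}^2 \leq c R^2$ uniformly on $\Omega \times K$. Combining this with the previous display shows that the difference between the sought expression and $S_n(\alpha, z)$ is at most $c R^2 \norm{M_n(z) - \operatorname{diag}(D_n, D_n)}$, which converges to $0$ uniformly on $\Omega \times K$. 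As $S_n(\alpha, z) \to g(\alpha, z)$ uniformly on $\Omega \times K$ by assumption, the triangle inequality yields the claimed uniform limit.

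I do not anticipate a substantive obstacle: the corollary is in essence a quadratic-form perturbation estimate, and the only delicate point is maintaining uniformity in both variables simultaneously. The boundedness of $\Omega$ is precisely what turns the homogeneous bound of Theorem~\ref{thm:1} into a bound uniform in the initial condition $\alpha$, while the uniform-on-$K$ convergence in \eqref{eq:23}, combined with the assumed uniform convergence $S_n \to g$, is what propagates uniformity to the final statement on $\Omega \times K$.
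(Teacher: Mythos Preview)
Your argument is correct and follows the same route as the paper: bound the difference between $S_n$ and the target expression by $\norm{a_{n+N-1}}\norm{v_n}^2$ times the operator-norm error from \eqref{eq:23}, then invoke Theorem~\ref{thm:1} together with the boundedness of $\Omega$ to make this uniform, and finish with $S_n\to g$. The only cosmetic difference is that the paper phrases the first step via an $\varepsilon$-$M$ argument on $Q_n^z$ rather than naming the operator $M_n(z)$.
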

\begin{proof}
	Fix $\varepsilon > 0$. By \eqref{eq:23} there is $M$ such that for all $n \geq M$, $z \in K$ 
	and $v \in \calH \oplus \calH$
	\[
		|Q_n^z(v) - ( \sprod{D_n v_1}{v_1}_\calH + \sprod{D_n v_2}{v_2}_\calH) |
		\leq \varepsilon \norm{v}^2.
	\]
	Hence,
	\[
		|S_n -  \norm{a_{n+N-1}}( \sprod{D_n u_{n-1}}{u_{n-1}}_\calH + \sprod{D_n u_{n}}{u_{n}}_\calH) |
		\leq \varepsilon \norm{a_{n+N-1}} (\norm{u_{n-1}}^2 + \norm{u_{n}}^2)
	\]
	uniformly on $\Omega \times K$.
	By Theorem~\ref{thm:1} there is a~constant $c' > 0$ such that 
	\[
		|S_n -  \norm{a_{n+N-1}}( \sprod{D_n u_{n-1}}{u_{n-1}}_\calH + \sprod{D_n u_{n}}{u_{n}}_\calH) |
		\leq \varepsilon c'
	\]
	uniformly on $\Omega \times K$. The proof is complete.
\end{proof}

\subsection{The proof of the convergence}
In this section we are going to prove that the sequence $(S_n : n \geq 0)$ is convergent, which
leads to the proofs of Theorem \ref{thm:B} and \ref{thm:C}.

Let us begin with the main algebraic part of the proof.
	\begin{lemma} \label{lem:3}
		Let $u$ be a~generalised eigenvector associated with $z \in \CC$ and 
		$\alpha \in \calH \oplus \calH$. Then
		\begin{multline*}
			\frac{|S_{n+1}(\alpha, z) - S_n(\alpha, z)|}{\norm{u_{n-1}}^2 + \norm{u_n}^2} \leq 
			\norm{ X_n(z)} \norm{a_{n+N}} \big( \norm{a_{n+N}^{-1} a_{n+N-1}^* 
			- a_n^{-1} a_{n-1}^*} + \\
			|z| \norm{a_{n+N}^{-1} - a_{n}^{-1}} + |z - \overline{z}| \norm{a_{n+N}^{-1}} 
			+ \norm{a_{n+N}^{-1} b_{n+N} - a_{n}^{-1} b_n} \big).
		\end{multline*}
	\end{lemma}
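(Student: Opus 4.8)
The plan is to express both $S_{n+1}(\alpha,z)$ and $S_n(\alpha,z)$ as quadratic forms in the \emph{single} vector $w_n = (u_{n-1}, u_n)^t$, so that their difference becomes one quadratic form whose symbol I can estimate directly. Writing $G_m = \begin{pmatrix} a_m & 0 \\ 0 & a_m^* \end{pmatrix}$ and recalling from \eqref{eq:16}--\eqref{eq:7} that $S_n(\alpha, z) = \langle \sym{G_{n+N-1} E X_n(z)}\, w_n, w_n \rangle$, I would first invoke the transfer relation \eqref{eq:1}, namely $w_{n+1} = B_n(z) w_n$, together with \eqref{eq:9}, to obtain $S_{n+1}(\alpha,z) = \langle \sym{B_n(z)^* G_{n+N} E X_{n+1}(z) B_n(z)}\, w_n, w_n \rangle$.

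The next step exploits the multiplicativity of the transfer matrices. Since the product convention gives $X_n(z) = B_{n+N-1}(z)\cdots B_n(z)$, one has $X_{n+1}(z) = B_{n+N}(z) X_n(z) B_n^{-1}(z)$, hence $X_{n+1}(z) B_n(z) = B_{n+N}(z) X_n(z)$. Substituting this, the difference takes the form $S_{n+1}(\alpha,z) - S_n(\alpha,z) = \langle \sym{C_n}\, w_n, w_n \rangle$ with
\[
	C_n = \big[\, B_n(z)^* G_{n+N} E B_{n+N}(z) - G_{n+N-1} E \,\big]\, X_n(z).
\]

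The key algebraic step is to compute the bracketed symbol. The product $G_{n+N} E B_{n+N}(z)$ is evaluated exactly as in the proof of Lemma~\ref{lem:1} (that computation is valid for any $z \in \CC$), yielding an upper block-triangular matrix. Multiplying on the left by $B_n(z)^*$ — where $\overline{z}$ enters through the adjoint — and subtracting $G_{n+N-1} E$ produces a matrix supported entirely in its second block-column. Factoring $a_{n+N}^*$ out on the right, the two surviving entries are precisely $D_1^* a_{n+N}^*$ and $D_2^* a_{n+N}^*$, where
\[
	D_1 = a_{n+N}^{-1} a_{n+N-1}^* - a_n^{-1} a_{n-1}^*, \qquad
	D_2 = z(a_n^{-1} - a_{n+N}^{-1}) + (z - \overline{z}) a_{n+N}^{-1} - (a_n^{-1} b_n - a_{n+N}^{-1} b_{n+N}).
\]

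To conclude I would take norms: from the block structure, $\norm{C_n} \leq (\norm{D_1} + \norm{D_2})\, \norm{a_{n+N}}\, \norm{X_n(z)}$, and then $|S_{n+1}(\alpha,z) - S_n(\alpha,z)| \leq \norm{\sym{C_n}}(\norm{u_{n-1}}^2 + \norm{u_n}^2) \leq \norm{C_n}(\norm{u_{n-1}}^2 + \norm{u_n}^2)$ by \eqref{eq:12}; the triangle inequality applied to $D_2$ then separates out the four stated summands. I expect the main obstacle to be the bookkeeping in the complex case: because $B_n(z)^*$ carries $\overline{z}$, the clean cancellation that holds for real $\lambda$ (which is what Lemma~\ref{lem:1} records) leaves a residual term proportional to $z - \overline{z}$, and this is exactly the origin of the $|z - \overline{z}|\,\norm{a_{n+N}^{-1}}$ contribution. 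Some care is needed to isolate this term cleanly while factoring out $a_{n+N}^*$ and while keeping track of which entries use $z$ versus $\overline{z}$.
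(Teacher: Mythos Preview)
Your proposal is correct and follows essentially the same route as the paper: both express $S_{n+1}-S_n$ as $\langle \sym{C_n}\,w_n,w_n\rangle$ with $C_n = [B_n(z)^* G_{n+N} E B_{n+N}(z) - G_{n+N-1} E]\,X_n(z)$, compute the bracket explicitly (the paper additionally inserts $-EE=\Id$ so that the nonzero entries land in the first column rather than the second, a cosmetic difference), and then factor out $a_{n+N}^*$ and apply the triangle inequality, with the $\overline z$ coming from $B_n(z)^*$ producing the $|z-\overline z|\,\|a_{n+N}^{-1}\|$ term. Your identification of $D_1,D_2$ and the final norm bound match the paper's estimate exactly.
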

	\begin{proof}
		The formula \eqref{eq:1} implies
		\begin{align*}
	    	S_{n+1}(\alpha, z) & =
	        \bigg\langle 
	        \sym{
		        \begin{pmatrix}
		        	a_{n+N} & 0 \\
		        	0 & a_{n+N}^*
		        \end{pmatrix}
		        E X_{n+1}(z)
	        }
	        \begin{pmatrix}
	        u_{n}\\
	        u_{n+1}
	        \end{pmatrix}
	        ,
	        \begin{pmatrix}
	        u_n\\
	        u_{n+1}
	        \end{pmatrix}
	        \bigg\rangle\\
	        & =
	        \bigg\langle 
	        (B_n(z))^*
	        \sym{
		        \begin{pmatrix}
		        	a_{n+N} & 0 \\
		        	0 & a_{n+N}^*
		        \end{pmatrix}
		        E X_{n+1}(z)
	        }
	        B_n(z)
	        \begin{pmatrix}
	        u_{n-1}\\
	        u_{n}
	        \end{pmatrix}
	        ,
	        \begin{pmatrix}
	        u_{n-1}\\
	        u_{n}
			\end{pmatrix}
			\bigg\rangle.
		\end{align*}
	
		Therefore, by the formulas \eqref{eq:9} and \eqref{eq:11}
		\begin{equation} \label{eq:4}
			S_{n+1} - S_n =
			\bigg\langle
				\sym{C_n(z)}
			  \begin{pmatrix}
	        u_{n-1}\\
	        u_{n}
	        \end{pmatrix}
	        ,
	        \begin{pmatrix}
	        u_{n-1}\\
	        u_{n}
			\end{pmatrix}
			\bigg\rangle,
		\end{equation}
		where
		\[
			C_n(z) =
			(B_n(z))^* 
			\begin{pmatrix}
				a_{n+N} & 0 \\
				0 & a_{n+N}^*
			\end{pmatrix}
			E X_{n+1}(z) B_n(z)
			-
			\begin{pmatrix}
				a_{n+N-1} & 0 \\
				0 & a_{n+N-1}^*
			\end{pmatrix}
			E X_n(z).
		\]		
		
		By using $E^{-1} = -E$, we can write
	   	\[
			(B_n(z))^* 
			\begin{pmatrix}
				a_{n+N} & 0 \\
				0 & a_{n+N}^*
			\end{pmatrix}
			E X_{n+1}(z) B_n(z)
			=
			- (B_n(z))^* 
			\begin{pmatrix}
				a_{n+N} & 0 \\
				0 & a_{n+N}^*
			\end{pmatrix}
			E B_{n+N}(z) E E X_{n}(z).
	   	\]
		Hence,
		\[
			C_n(z) = 
			-\left[ 
			(B_n(z))^* 
			\begin{pmatrix}
				a_{n+N} & 0 \\
				0 & a_{n+N}^*
			\end{pmatrix}
			E B_{n+N}(z) E
			+
			\begin{pmatrix}
				a_{n+N-1} & 0 \\
				0 & a_{n+N-1}^*
			\end{pmatrix}
			\right]
			E X_n(z).
		\]
		Now we can compute
	    \begin{multline*}
			(B_n(z))^* 
			\begin{pmatrix}
				a_{n+N} & 0 \\
				0 & a_{n+N}^*
			\end{pmatrix}
			E B_{n+N}(z) E \\ 
			= 
	        \begin{pmatrix}
		        0 & -a_{n-1} (a_n^*)^{-1} \\
	    	    \Id & (\overline{z} \Id - b_n) (a_n^*)^{-1}
	        \end{pmatrix}
	        \begin{pmatrix}
	        	0 & -a_{n+N} \\
	        	a_{n+N}^* & 0
	        \end{pmatrix}
	        \begin{pmatrix}
		        \Id & 0 \\
	    	    a_{n+N}^{-1} (\lambda \Id - b_{n+N}) & a_{n+N}^{-1} a_{n+N-1}^*
	        \end{pmatrix} \\
			=
	        \begin{pmatrix}
	        	-a_{n-1} (a_n^*)^{-1} a_{n+N}^* & 0 \\
		        (\overline{z} \Id - b_n) (a_n^*)^{-1} a_{n+N}^* & -a_{n+N}
	        \end{pmatrix}
	        \begin{pmatrix}
	    	    \Id & 0 \\
	        	a_{n+N}^{-1}(z \Id - b_{n+N}) & a_{n+N}^{-1} a_{n+N-1}^*
	        \end{pmatrix} \\
			=
	        \begin{pmatrix}
	        -a_{n-1} (a_n^*)^{-1} a_{n+N}^* & 0 \\
	        (\overline{z} \Id - b_n)(a_n^*)^{-1} a_{n+N}^* -(z \Id - b_{n+N}) & -a_{n+N-1}^*
	        \end{pmatrix}.
		\end{multline*}
	    Therefore,
	    \[
			C_n(z) =-
	        \begin{pmatrix}
	        	-a_{n-1}(a_n^*)^{-1} a_{n+N}^* + a_{n+N-1} & 0 \\
	        	(\overline{z} \Id - b_n)(a_n^*)^{-1} a_{n+N}^* -(z \Id - b_{n+N}) & 0
	        \end{pmatrix}
	        E X_n(z).
		\]
	   In particular we can estimate
		\begin{multline*}
			\norm{C_n(z)}
	        \leq
	        \norm{X_n(z)} \norm{a_{n+N}^*} \big( 
	        \norm{a_{n+N-1} (a_{n+N}^*)^{-1} - a_{n-1}(a_{n}^*)^{-1}} \\ 
	        + 
	        |z| \norm{(a_n^*)^{-1} - (a_{n+N}^*)^{-1}} + 
	        |z - \overline{z}| \norm{a_{n+N}^{-1}} +
	        \norm{b_{n+N}(a_{n+N}^*)^{-1} - b_n (a_n^*)^{-1}} \big).
		\end{multline*}
		Therefore, by the last inequality together with \eqref{eq:4}, Schwarz inequality and \eqref{eq:12}
		the result follows.
	\end{proof}

The main result of this section is the following theorem.
\begin{theorem}
	\label{thm:2}
	Assume that for an integer $N \geq 1$
	\begin{enumerate}[(a)]
		\item 
		$
		\begin{aligned}[t]
			\calV_N \bigg( a_n^{-1} : n \geq 0 \bigg) 
			+ \calV_N \bigg( a_{n}^{-1} b_n : n \geq 0 \bigg)
			+ \calV_N \bigg(a_{n}^{-1} a_{n-1}^* : n \geq 1 \bigg) < \infty;
		\end{aligned}
		$ \label{thm:2:eq:1}
		\item $
		\begin{aligned}[t]
		\frac{\norm{a_{n+1}}}{\norm{a_n}} < c_1
		\end{aligned}
		$
		for a~constant $c_1 > 0$ and all $n \in \NN$; \label{thm:2:eq:2}
		\item the family defined in \eqref{eq:16}
		$
		\begin{aligned}[t]
			\big\{Q^z : z \in K \big\}
		\end{aligned}
		$ \label{thm:2:eq:3}
		is uniformly non-degenerated on a~compact connected set $K$.
	\end{enumerate}
	Then there is $c \geq 1$ such that for every $n \geq 1$, for all $z \in K \cap \RR$ and for every 
	generalised eigenvector $u$ corresponding to $z$ we have
	\[
		c^{-1} (\norm{u_0}^2 + \norm{u_1}^2) 
		\leq \norm{a_n}(\norm{u_{n-1}}^2 + \norm{u_n}^2) 
		\leq c (\norm{u_0}^2 + \norm{u_1}^2).
	\]
	Moreover, if 
	\begin{equation} \label{thm:2:eq:4}
		\sum_{n=0}^\infty \norm{a_{n}^{-1}} < \infty,
	\end{equation}
	then the same conclusion holds for $z \in K$.
\end{theorem}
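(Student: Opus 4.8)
The plan is to reduce the statement to Theorem~\ref{thm:1}: that theorem already turns a uniform two-sided bound on the $N$-shifted Turán determinants—namely \eqref{eq:17}, $c^{-1}\leq\abs{S_n(\alpha,z)}\leq c$ for unit $\alpha$, $z$ in the set under consideration and large $n$—into the asserted estimate for $\norm{a_{n+N-1}}(\norm{u_{n-1}}^2+\norm{u_n}^2)$. The hypotheses are arranged so that afterwards the index $n+N-1$ may be traded for $n$: by \eqref{thm:2:eq:1} the subsequences of $a_m^{-1}a_{m-1}^*$ are Cauchy and hence $\sup_m\norm{a_m^{-1}a_{m-1}^*}<\infty$, so $\norm{a_{n-1}}=\norm{a_{n-1}^*}\leq\norm{a_n}\,\norm{a_n^{-1}a_{n-1}^*}$ bounds one ratio while \eqref{thm:2:eq:2} bounds the reverse one; thus $\norm{a_{n+N-1}}\asymp\norm{a_n}$. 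The entire task is therefore to establish \eqref{eq:17}, first for $z\in K\cap\RR$ and then, under \eqref{thm:2:eq:4}, for all $z\in K$.

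First I would record two standing facts. The number $S_n$ is \emph{real}, since it is built from $\sym{\cdot}$ and $\langle\sym{M}v,v\rangle\in\RR$. Moreover \eqref{thm:2:eq:1} forces the three subsequences $(a^{-1}_{kN+j})_k$, $(a^{-1}_{kN+j}b_{kN+j})_k$ and $(a^{-1}_{kN+j}a^*_{kN+j-1})_k$ to be Cauchy, so the transfer matrices converge along residue classes, uniformly on compacta of $\CC$ (they depend affinely on $z$); hence $\norm{X_n(z)}=\norm{\prod_{j=n}^{n+N-1}B_j(z)}\leq M_X$ for all $n$ and all $z\in K$, which is the boundedness part of Proposition~\ref{prop:5}. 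Finally \eqref{thm:2:eq:2} gives $\norm{a_{n+N}}/\norm{a_{n+N-1}}<c_1$.

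The crux is to combine Lemma~\ref{lem:3} with the uniform non-degeneracy \eqref{thm:2:eq:3}. Non-degeneracy yields $\abs{S_n}\geq c^{-1}\norm{a_{n+N-1}}(\norm{u_{n-1}}^2+\norm{u_n}^2)$ for $n\geq M$, while Lemma~\ref{lem:3} bounds $\abs{S_{n+1}-S_n}$ by $M_X\,\norm{a_{n+N}}(\norm{u_{n-1}}^2+\norm{u_n}^2)$ times a sum of four increments. Dividing, the common factor $\norm{u_{n-1}}^2+\norm{u_n}^2$ cancels and I obtain $\abs{S_{n+1}-S_n}/\abs{S_n}\leq\varepsilon_n$, with $\varepsilon_n=cc_1M_X$ times $\big(\norm{a_{n+N}^{-1}a_{n+N-1}^*-a_n^{-1}a_{n-1}^*}+\abs{z}\,\norm{a_{n+N}^{-1}-a_n^{-1}}+\abs{z-\overline z}\,\norm{a_{n+N}^{-1}}+\norm{a_{n+N}^{-1}b_{n+N}-a_n^{-1}b_n}\big)$. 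The conceptual point is that for $z\in\RR$ the third increment $\abs{z-\overline z}\,\norm{a_{n+N}^{-1}}$ vanishes, and the remaining three sum over $n$ to $\calV_N(a_n^{-1}a_{n-1}^*)+(\sup_{z\in K}\abs{z})\,\calV_N(a_n^{-1})+\calV_N(a_n^{-1}b_n)$, finite by \eqref{thm:2:eq:1}; hence $\sum_n\varepsilon_n<\infty$ uniformly on $K\cap\RR$. Under \eqref{thm:2:eq:4} the omitted term contributes $(\sup_{z\in K}\abs{z-\overline z})\sum_n\norm{a_n^{-1}}<\infty$, so the same uniform summability persists on all of $K$.

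The last step is a telescoping argument, for which boundedness of $\abs{S_n}$ (not convergence) suffices. Choose $M'\geq M$ with $\varepsilon_n<\tfrac12$ for $n\geq M'$; then $\abs{S_{n+1}-S_n}<\abs{S_n}$ keeps the sign of the real number $S_n$ constant in $n$, and $\abs{S_{M'}}\prod_{k\geq M'}(1-\varepsilon_k)\leq\abs{S_n}\leq\abs{S_{M'}}\prod_{k\geq M'}(1+\varepsilon_k)$, both bounds positive and finite as $\sum_k\varepsilon_k<\infty$; connectedness of $K$ together with continuity in $z$ makes this common sign the same throughout $K$. What remains—and is the genuine obstacle—is to bound $\abs{S_{M'}(\alpha,z)}$ above and below uniformly for $\norm{\alpha}=1$ and $z\in K$. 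For this I would write $(u_{M'-1},u_{M'})^t$ as the image of $\alpha$ under the product of transfer matrices from \eqref{eq:1}, which on the compact set $K$ is a uniformly bounded family of invertible operators with uniformly bounded inverses; combined with non-degeneracy this pins $\abs{S_{M'}}$ between two positive constants. This establishes \eqref{eq:17}, after which Theorem~\ref{thm:1} finishes the proof. The real difficulty throughout is keeping every constant uniform in $\alpha$ and $z$ simultaneously, the real-versus-complex dichotomy being exactly the vanishing of the $\abs{z-\overline z}$ term.
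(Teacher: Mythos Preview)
Your proposal is correct and follows essentially the same route as the paper: reduce to Theorem~\ref{thm:1} by establishing the two-sided bound \eqref{eq:17} on $|S_n|$; obtain $|S_{n+1}-S_n|/|S_n|\leq\varepsilon_n$ from Lemma~\ref{lem:3} combined with the uniform non-degeneracy; sum the $\varepsilon_n$ using \eqref{thm:2:eq:1} (and \eqref{thm:2:eq:4} for complex $z$); bound $|S_{M'}|$ via the finite product of transfer matrices and their inverses; and finish by the telescoping product. You are, if anything, slightly more explicit than the paper on two points it glosses over: the exchange of $\norm{a_{n+N-1}}$ for $\norm{a_n}$ (which you justify correctly using \eqref{thm:2:eq:2} and the boundedness of $a_n^{-1}a_{n-1}^*$), and the sign-preservation of $S_n$ once $\varepsilon_n<1$. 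Your remark on connectedness of $K$ is in fact not needed for the absolute-value bounds you write, since $\prod(1\pm\varepsilon_k)$ already controls $|S_n|/|S_{M'}|$; the paper invokes connectedness only to phrase the argument via the well-definedness of $F_n=(S_{n+1}-S_n)/S_n$ on the whole of $\Omega\times K$, which is equivalent.
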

\begin{proof}
	Let $\Omega \subset \calH \oplus \calH \setminus \{ 0 \}$ be a~connected bounded closed set. 
	Let $S_n$ be a~sequence of functions defined by \eqref{eq:7}. In view of
	Theorem \ref{thm:1}, it is enough to show that there are $c \geq 1$ and $M > 0$ such that
	\begin{equation}
		\label{eq:22}
		c^{-1} \leq \abs{S_n(\alpha, z)} \leq c
	\end{equation}
	for all $\alpha \in \Omega$, $z \in K$ and $n > M$. The study of the sequence $(S_n : n \in \NN)$ 
	is motivated by the method developed in \cite{Swiderski2017}.

	Given a~generalised eigenvector corresponding to $z \in K$ such that $(u_0, u_1)^t = \alpha \in \Omega$,
	we can easily see that for each $n \geq 2$ $u_n$, considered as a~function of $\alpha$ and $z$,
	is continuous on $\Omega \times K$. As a~consequence, the function $S_n$ is continuous on
	$\Omega \times K$. Since $\{Q^z : z \in K\}$ is uniformly non-degenerated, there is $M > 0$ 
	such that for each $n \geq M$ the function $S_n$ has no zeros and has the same sign for all $z \in K$
	and $\alpha \in \Omega$. Otherwise, by the connectedness of $\Omega \times K$, there would be 
	$\alpha \in \Omega$ and $z \in K$ such that $S_n(\alpha, z) = 0$, which would contradict 
	the non-degeneracy of $Q_n^z$. 
	
	Next, we define a~sequence of functions $(F_n : n \geq M)$ on $\Omega \times K$ by setting
   	\[
		F_n = \frac{S_{n+1} - S_n}{S_n}.
   	\]
	Then
   	\begin{equation} \label{eq:20}
		\frac{S_n}{S_M} = \prod_{j=M}^{n-1} (1 + F_j).
   	\end{equation}
   
	First of all, let us show that
   	\begin{equation} \label{eq:21}
   		C^{-1} \leq |S_M(\alpha, z)| \leq C
   	\end{equation}
   	for a~constant $C>1$ independent of $\alpha$ and $z$. If it is the case, then by \eqref{eq:20} and
	the fact that each function $F_n$ is continuous, to conclude \eqref{eq:22} it is enough to show
	that the product
	\[
		\prod_{j = M}^n (1 + F_j)
	\]
	converges uniformly on $\Omega \times K$ to a~limit that is bounded away from $0$, which will be
	satisfied if we prove that
	\begin{equation}
		\label{part2:eq:11}
		\sum_{j = M}^\infty \sup_{\alpha \in \Omega} \sup_{z \in K} \abs{F_n(\alpha, z)} < \infty.
	\end{equation}   
   	Let us observe that by \eqref{eq:7} and \eqref{eq:12}
	\begin{equation} \label{eq:18}
		|S_M(\alpha, z)| \leq \norm{a_{M+N-1}} \norm{X_M(z)} 
		(\norm{u_{M-1}(\alpha, z)}^2 + \norm{u_{M}(\alpha, z)}^2).
	\end{equation}
	Moreover, by \eqref{eq:1}
	\begin{equation} \label{eq:31}
		\norm{u_{M-1}(\alpha, z)}^2 + \norm{u_{M}(\alpha, z)}^2 = 
		\langle Y(z) \alpha, Y(z) \alpha \rangle =
		\langle [Y(z)]^* Y(z) \alpha, \alpha \rangle,
	\end{equation}
	for
	\begin{equation} \label{eq:33}
		Y(z) = \prod_{i=1}^{M-1} B_i(z).
	\end{equation}
	Hence,
	\begin{equation} \label{eq:19}
		\norm{u_{M-1}(\alpha, z)}^2 + \norm{u_{M}(\alpha, z)}^2 \leq
		\left[ \prod_{i=1}^{M-1} \norm{B_i(z)}^2 \right] \norm{\alpha}^2.
	\end{equation}
	For every $i$ the function $z \mapsto \norm{B_i(z)}$ is continuous on the compact set $K$.
	Hence, it is uniformly bounded. Furthermore, by the boundedness of $\Omega$ one has that $\norm{\alpha}$ 
	is bounded as well. It shows that the right-hand side of \eqref{eq:19} is uniformly bounded on 
	$\Omega \times K$. Similarly,
	\[
		\norm{X_M(z)} \leq \prod_{i=M}^{M+N-1} \norm{B_{i}(z)}
	\]	
	is uniformly bounded. It implies that the right-hand side of \eqref{eq:18} is uniformly bounded as well.
	Thus, the upper bound in the inequality \eqref{eq:21} is proved. To prove the lower bound, let us see
	that the uniform non-degeneracy implies 
	\begin{equation} \label{eq:32}
		|S_M(\alpha, z)| \geq \norm{a_{N+M-1}} (\norm{u_{M-1}(\alpha, z)}^2 + \norm{u_{M}(\alpha, z)}^2)
	\end{equation}
	for a~constant $c>0$ independent of $\alpha$ and $z$. 
	So by \eqref{eq:31} it remains to show that $[Y(z)]^* Y(z)$ is a~strictly positive operator 
	uniformly with respect to $z \in K$. It will be implied by the uniform bound on 
	$\norm{([Y(z)]^* Y(z))^{-1}}$.
	According to \eqref{eq:33}
	\[
		\norm{([Y(z)]^* Y(z))^{-1}} \leq \prod_{i=1}^{M-1} \norm{B_i^{-1}(z)}^2
	\]
	and by \eqref{eq:2}, as in \eqref{eq:19}, the right-hand side of this inequality is uniformly bounded on 
	$K$. Hence, by \eqref{eq:31} there is a~constant $c'>0$ such that 
	\[	
		\norm{u_{M-1}(\alpha, z)}^2 + \norm{u_{M}(\alpha, z)}^2 \geq c' \norm{\alpha}^2.
	\]
	Consequently, by the positive distance of $\Omega$ to $0$ and \eqref{eq:32}, we proved the remaining 
	lower bound in \eqref{eq:21}.
	
	It remains to prove \eqref{part2:eq:11}. Let $u$ be a~generalised eigenvector corresponding to 
	$z \in K$ such that $(u_0, u_1)^t = \alpha \in \Omega$. 
	In view of \eqref{thm:2:eq:1}, each subsequence $(B_{kN+j}(z) : k \in \NN)$ is uniformly convergent,
	and consequently, the norms $\| X_n(z) \|$ are uniformly bounded with respect to $n$ and 
	$z \in K$.
	Moreover, since $\{Q(z) : z \in K\}$ is uniformly non-degenerated
	\[
		\abs{S_n(\alpha, z)} \geq c^{-1} \norm{a_{n+N-1}} (\norm{u_{n-1}}^2 + \norm{u_n}^2)
	\]
	for $n \geq M$. Therefore, by Lemma~\ref{lem:3}
	\begin{multline} \label{part2:eq:szybkoscZbieznosciReg}
		\abs{F_n(\alpha, z)} \leq
		c c' c_1 \big( \norm{a_{n+N}^{-1} a_{n+N-1}^* - a_n^{-1} a_{n-1}^*} 
		+ |z| \norm{a_{n+N}^{-1} - a_{n}^{-1}} \\
		+ |z - \overline{z}| \norm{a_{n+N}^{-1}} 
		+ \norm{a_{n+N}^{-1} b_{n+N} - a_{n}^{-1} b_n} \big)
	\end{multline}
	for every $\alpha \in \Omega$.
	Using \eqref{thm:2:eq:2}, we can estimate 
	\[
		\begin{aligned}
		\sum_{n = M}^\infty \sup_{\alpha \in \Omega} \sup_{z \in K} \abs{F_n(\alpha, z)} 
		& \leq
		c c' c_1 \calV_N ( a_{n}^{-1} a_{n-1}^{*} : n \geq M )
		+
		c c' c_1 \calV_N ( a_{n}^{-1} b_n : n \geq M) \\
		& \phantom{\leq}+
		c c' c_1 \sup_{z \in K} |z| \calV_N ( a_n^{-1} : n \geq M) 		
		+ c c' c_1 \sup_{z \in K} |z - \overline{z}| \sum_{n=M}^\infty \norm{a_{n}^{-1}}.
		\end{aligned}
	\]
	Thus, \eqref{thm:2:eq:1} and \eqref{thm:2:eq:4} implies \eqref{eq:22}.
	If condition \eqref{thm:2:eq:4} is not satisfied consider $K \cap \RR$ instead $K$ in the last inequality.
	The proof is complete.
\end{proof}

The following Corollary provides an estimate, which in the scalar case expresses the bound on the rate
of the convergence of Turán determinants to the density of the spectral measure of $A$ 
(see \cite{Swiderski2017b}). It follows from the standard proof of the convergence of infinite products 
of numbers.
\begin{corollary} \label{cor:3}
	Under the hypothesis of Theorem \ref{thm:2}, for every bounded and closed 
	$\Omega \subset \calH \oplus \calH \setminus \{ 0 \}$ the sequence of continuous functions 
	$(S_n : n \in \NN)$ converges uniformly on
	$\Omega \times (K \cap \RR)$ (or on $\Omega \times K$ if \eqref{thm:2:eq:4} is satisfied) to the function
	$g$ bounded away from $0$. Moreover, by \eqref{part2:eq:szybkoscZbieznosciReg} there is a~constant 
	$c>0$ such that for all $m > 0$
	\begin{multline*}
		\sup_{\alpha \in \Omega} \sup_{z \in K \cap \RR} |g(\alpha, \lambda) - S_m(\alpha, z)| 
		\leq c \calV_N ( a_{n}^{-1} a_{n-1}^{*} : n \geq m )
		+
		c \calV_N ( a_n^{-1} : n \geq m) \\		
		+
		c \calV_N ( a_{n}^{-1} b_n : n \geq m).
	\end{multline*}
\end{corollary}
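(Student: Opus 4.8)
The plan is to read off both assertions from material already assembled in the proof of Theorem~\ref{thm:2}. For the first assertion I would argue that the convergence is nothing but the convergence of the infinite product constructed there. Recall that for $n \geq M$ one has the representation $S_n = S_M \prod_{j=M}^{n-1}(1 + F_j)$ with $F_n = (S_{n+1}-S_n)/S_n$, that $\sum_{j=M}^\infty \sup_{\Omega \times (K \cap \RR)} |F_j| < \infty$, and that $c^{-1} \leq |S_n| \leq c$ for $n \geq M$. Since $|1 + F_j| = |S_{j+1}|/|S_j| \geq c^{-2}$, the partial products are uniformly bounded away from $0$, so the standard criterion gives uniform convergence of $\prod_{j=M}^\infty(1+F_j)$ to a nonvanishing limit. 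Hence $S_n$ converges uniformly to $g = S_M \prod_{j=M}^\infty(1+F_j)$ on $\Omega \times (K\cap\RR)$ (and on $\Omega \times K$ when \eqref{thm:2:eq:4} holds, since then the extra $|z-\overline{z}|$ term in \eqref{part2:eq:szybkoscZbieznosciReg} is separately summable); letting $n \to \infty$ in $|S_n| \geq c^{-1}$ shows $g$ is bounded away from $0$.

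For the quantitative estimate I would telescope rather than expand the product, which lets me treat every $m$ at once. Writing $g - S_m = \sum_{n=m}^\infty (S_{n+1}-S_n)$ gives
\[
\sup_{\alpha \in \Omega} \sup_{z \in K \cap \RR} |g(\alpha,z) - S_m(\alpha,z)| \leq \sum_{n=m}^\infty \sup_{\Omega \times (K\cap\RR)} |S_{n+1}-S_n|.
\]
Now I would bound each difference by Lemma~\ref{lem:3}. The only $(\alpha,z)$-dependent factor there is $\norm{X_n(z)}\norm{a_{n+N}}(\norm{u_{n-1}}^2 + \norm{u_n}^2)$, and this is uniformly bounded on $\Omega \times (K\cap\RR)$: for $n \geq M$ this is exactly the bound underlying \eqref{part2:eq:szybkoscZbieznosciReg}, since uniform non-degeneracy gives $\norm{a_{n+N-1}}(\norm{u_{n-1}}^2+\norm{u_n}^2) \leq c|S_n| \leq c^2$, the norms $\norm{X_n}$ are uniformly bounded, and $\norm{a_{n+N}}/\norm{a_{n+N-1}} < c_1$ by \eqref{thm:2:eq:2}; for the finitely many $n < M$ it is bounded by continuity on the compact set $\Omega \times (K\cap\RR)$. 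On $K \cap \RR$ the term $|z-\overline{z}|\norm{a_{n+N}^{-1}}$ vanishes, so summing the remaining three contributions of Lemma~\ref{lem:3} over $n \geq m$ produces precisely $\calV_N(a_n^{-1}a_{n-1}^* : n \geq m)$, $\sup_{z\in K}|z|\,\calV_N(a_n^{-1} : n \geq m)$ and $\calV_N(a_n^{-1}b_n : n \geq m)$. Absorbing $\sup_{z \in K}|z|$ and the uniform prefactor bound into a single constant $c$ yields the asserted inequality.

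I expect the only genuinely delicate step to be the uniform bound on the prefactor $\norm{X_n(z)}\norm{a_{n+N}}(\norm{u_{n-1}}^2 + \norm{u_n}^2)$ across all $n$: in the tail $n \geq M$ it relies on the uniform non-degeneracy of $\{Q^z : z \in K\}$ (through Theorems~\ref{thm:1} and~\ref{thm:2}), while for small $n$ it relies on compactness of $\Omega \times (K\cap\RR)$; fusing the two regimes is what makes the estimate valid for all $m > 0$ rather than only large $m$. Keeping $g$ away from $0$ is the second point, handled by $|1+F_j| = |S_{j+1}|/|S_j|$ rather than by smallness of $|F_j|$. The restriction to $K \cap \RR$ is essential only to discard the $|z-\overline{z}|$ contribution, which is not a tail variation; under the summability hypothesis \eqref{thm:2:eq:4} it can be retained as a genuinely summable term and the conclusion then holds on all of $K$. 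The remainder is routine telescoping/infinite-product bookkeeping.
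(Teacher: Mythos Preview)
Your approach is correct and matches the paper's: the paper's proof is a single sentence saying the corollary follows from the standard proof of convergence of infinite products together with \eqref{part2:eq:szybkoscZbieznosciReg}, and you are making exactly that argument explicit. Your telescoping $g - S_m = \sum_{n \geq m}(S_{n+1}-S_n)$ is equivalent to the product estimate via $|S_{n+1}-S_n| = |F_n|\,|S_n|$, so there is no substantive difference in route.

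One small correction: for the finitely many $n < M$ you invoke compactness of $\Omega \times (K \cap \RR)$, but $\Omega$ is only assumed bounded and closed in $\calH \oplus \calH$, and when $\calH$ is infinite-dimensional this need not be compact. The repair is immediate and is what the paper itself does in the proof of Theorem~\ref{thm:2}: use the explicit bound \eqref{eq:19} (which holds with $M$ replaced by any index), so that $\norm{u_{n-1}}^2 + \norm{u_n}^2 \leq \big[\prod_{i=1}^{n-1}\norm{B_i(z)}^2\big]\norm{\alpha}^2$ is bounded on $\Omega \times K$ because $\norm{\alpha}$ is bounded on $\Omega$ and each $\norm{B_i(\cdot)}$ is bounded on the compact set $K \subset \CC$.
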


Finally, we are ready to prove Theorems \ref{thm:B} and \ref{thm:C}.
\begin{proof}[Proof of Theorem~\ref{thm:B}]
	By Propositions \ref{prop:6} and \ref{prop:8} we have that the assumptions of Theorem~\ref{thm:2}
	are satisfied. Therefore, the result follows from Theorem~\ref{thm:1}.
\end{proof}

\begin{proof}[Proof of Theorem~\ref{thm:C}]
	Since every $C_n$ is invertible, we have
	\[
		\lim_{n \rightarrow \infty} \left\| \left( \frac{a_n}{\norm{a_n}} \right)^{-1} - C_n^{-1} \right\| = 0.
	\]
	Hence, for some $c > 0$
	\[
		\norm{a_n} \norm{a_n^{-1}} \leq c.
	\]
	Consequently,
	\[
		\norm{a_n^{-1}} \leq \frac{c}{\norm{a_n}}
	\]
	and \eqref{thm:2:eq:4} is satisfied. 
	Moreover, it implies that $T_n \equiv 0$ so, in the notation of Proposition~\ref{prop:5}, every 
	$\calF^i(\cdot)$ is constant. Hence, Proposition~\ref{prop:8} implies the almost uniform non-degeneracy of 
	$\{ Q^z : z \in \RR \}$. Since $\calF^i(\cdot)$ is constant on $\CC$ Proposition~\ref{prop:8} implies
	that $\{ Q^z : z \in \CC \}$ is almost uniformly non-degenerated as well. Thus, the assumptions of
	Theorem~\ref{thm:2} are satisfied, and consequently, Theorem~\ref{thm:1} implies the requested asymptotics.
	Finally, Corollary~\ref{cor:4} finishes the proof.
\end{proof}

\section{Exact asymptotics of generalised eigenvectors} \label{sec:exactAsym}
The following Theorem is a~vector valued version of \cite[Corollary 1]{Swiderski2017b}.
\begin{theorem} \label{thm:asym}
	Let $\Omega \subset \calH \oplus \calH \setminus \{ 0 \}$ be a~bounded and closed set and
	let $K \subset \RR$ (or $K \subset \CC$ whether the Carleman condition is not satisfied) be a~compact set. 
	Let $N$ be an odd integer. Let the hypotheses of Theorem~\ref{thm:B} be satisfied. Assume further
	that 
	\[
		T_n \equiv 0, \quad Q_n \equiv 0, \quad R_n \equiv \Id, \quad C_n \equiv C.
	\]
	Then $C = C^*$ and
	\[
		\lim_{n \rightarrow \infty} \norm{a_{n}}(\sprod{C u_{n-1}}{u_{n-1}}_\calH 
		+ \sprod{C u_{n}}{u_{n}}_\calH) = g
	\]
	uniformly on $\Omega \times K$, where 
	\[
		g(\alpha, z) = \lim_{n \rightarrow \infty} S_n(\alpha, z),
	\]
	for $S_n$ defined in \eqref{eq:7}.
\end{theorem}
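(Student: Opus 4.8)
The plan is to specialize the machinery of Theorem~\ref{thm:2} and Corollary~\ref{cor:2} to the present hypotheses, where the limiting transfer matrices degenerate dramatically. First I would verify the claim $C = C^*$. Since $R_n \equiv \Id$ and $C_n \equiv C$ are constant, Proposition~\ref{prop:6} forces the positive periodic sequence $r_n = \norm{C^{-1} C^* \Id}$ to be constant; combining this with the interplay between $R$ and $C$ in the hypotheses of Theorem~\ref{thm:B} (via the relation $a_n^{-1} a_{n-1}^* \to \Id$ and $a_n/\norm{a_n} \to C$) should pin down that $C$ is self-adjoint. Concretely, I expect $C = C^*$ to follow from comparing the limits of $a_n^{-1} a_{n-1}^*$ and of $(a_n/\norm{a_n})$, since $R_n \equiv \Id$ means $\norm{a_n} \to \norm{a_{n-1}}$ asymptotically and the normalized $a_n$ converge to a fixed $C$ whose polar structure must be trivial.

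Next I would compute the limiting matrix $\calF^n(z)$ from Proposition~\ref{prop:5}. With $T_n \equiv 0$, $Q_n \equiv 0$, $R_n \equiv \Id$, the factor $\calB_k(z) = \bigl(\begin{smallmatrix} 0 & \Id \\ -\Id & 0 \end{smallmatrix}\bigr) = E^{-1} = -E$ becomes \emph{independent of $z$}. Since $N$ is odd, the product $\prod_{k} \calB_k(z) = (-E)^N = -(-E) \cdot$ (something), and in any case $E^N$ for odd $N$ is a scalar multiple of $E$; this yields a constant $\calF^n(z)$ that does not depend on $z$. This is precisely the mechanism exploited in the proof of Theorem~\ref{thm:C}: because every $\calF^i(\cdot)$ is constant, Proposition~\ref{prop:8} delivers almost uniform non-degeneracy of $\{Q^z : z \in K\}$ on the relevant compact set (on $K \subset \RR$ always, and on $K \subset \CC$ when the Carleman condition fails). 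Thus the hypotheses of Theorem~\ref{thm:2} are met.

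With non-degeneracy in hand, Corollary~\ref{cor:3} gives that $S_n(\alpha,z)$ converges uniformly on $\Omega \times K$ to a limit $g$ bounded away from zero, so the stated limit $g(\alpha,z) = \lim_n S_n(\alpha,z)$ exists and is the right object. It remains to identify the limit with the quadratic expression $\norm{a_n}(\sprod{C u_{n-1}}{u_{n-1}} + \sprod{C u_n}{u_n})$. Here I would invoke Corollary~\ref{cor:2}: I must produce an $N$-periodic sequence of self-adjoint operators $D_n$ satisfying \eqref{eq:23}, and show $D_n \equiv C$. Computing the limit in \eqref{eq:23} using the degenerate $\calF^n$ from the previous step should give the block-diagonal form $\bigl(\begin{smallmatrix} C & 0 \\ 0 & C \end{smallmatrix}\bigr)$, i.e. $D_n \equiv C$; the oddness of $N$ is exactly what makes the off-diagonal blocks vanish and the two diagonal blocks coincide. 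Then Corollary~\ref{cor:2}, applied with this $D_n$ and using $\norm{a_{n+N-1}}/\norm{a_n} \to 1$ (a consequence of $R_n \equiv \Id$ via Proposition~\ref{prop:6}), yields the asserted uniform limit.

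The main obstacle I anticipate is the careful bookkeeping in the second step: verifying that for odd $N$ the limit in \eqref{eq:23} is genuinely block-diagonal with both diagonal entries equal to $C$, rather than some twisted combination involving $C$ and $C^*$. This requires tracking how the factors $E$ and $\bigl(\begin{smallmatrix} C & 0 \\ 0 & C^* \end{smallmatrix}\bigr)$ from the definition of $\calF^n$ interact with the $(-E)$-type transfer factors, and it is precisely where the self-adjointness $C = C^*$ and the parity of $N$ must be used in concert. Once that algebraic identity is confirmed, the rest is a direct appeal to the already-established Corollaries~\ref{cor:2} and~\ref{cor:3}.
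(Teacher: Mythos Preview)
Your proposal is correct and follows essentially the same route as the paper: compute the limiting transfer product using $\calB_k(z)=\bigl(\begin{smallmatrix}0&\Id\\-\Id&0\end{smallmatrix}\bigr)$ and the oddness of $N$ to obtain a $z$-independent block-diagonal $\calF$, deduce $C=C^*$ from Proposition~\ref{prop:6} (since $C^{-1}C^*R^{-1}=C^{-1}C^*$ must be a scalar multiple of $\Id$, forcing $rC=C^*$ with $r=1$), and then feed everything into Corollaries~\ref{cor:3} and~\ref{cor:2} with $D_n\equiv C$, using $\norm{a_{n+N-1}}/\norm{a_n}\to 1$. The only cosmetic difference is order: the paper first computes $\calF(\lambda)=(-1)^{(N-1)/2}\bigl(\begin{smallmatrix}\sym{C}&0\\0&\sym{C}\end{smallmatrix}\bigr)$ and then extracts $C=C^*$, whereas you do the self-adjointness first.
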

\begin{proof}
	We have
	\[
		\begin{pmatrix}
			0 & \Id \\
			-\Id & 0
		\end{pmatrix}^2
		= 
		-\begin{pmatrix}
			\Id & 0 \\
			0 & \Id
		\end{pmatrix}.
	\]
	Hence,
	\[
		\begin{pmatrix}
			0 & -C \\
			C^* & 0
		\end{pmatrix}
		\begin{pmatrix}
			0 & \Id \\
			-\Id & 0
		\end{pmatrix}^N
		=
		(-1)^{(N-1)/2}
		\begin{pmatrix}
			C & 0 \\
			0 & C^*
		\end{pmatrix}.
	\]
	Consequently,
	\[
		\calF(\lambda)
		=
		\begin{pmatrix}
			\sym{C} & 0 \\
			0 & \sym{C}
		\end{pmatrix}.
	\]
	Therefore, by Proposition~\ref{prop:6} $r \Id = C^{-1} C^*$ for $r = \norm{C^{-1} C^*}$. It implies
	that $r C = C^*$. Taking norms we obtain $r=1$, and consequently, $C = C^*$.
	Moreover, by Corollary~\ref{cor:3}, $g$ is a~continuous function on $\Omega \times K$ which is bounded away 
	from $0$. Hence, by Corollary~\ref{cor:2} the result follows.
\end{proof}

In the scalar case, and under stronger assumptions, the similar results were obtained in \cite{Ignjatovic2014}.
To obtain the complete information of the asymptotics it is of interest to identify the function $g$.
In the scalar case $g$ is related to the density of the spectral measure of $A$ 
(see \cite[Corollary 1]{Swiderski2017b}).

The following Corollary is an extension of \cite[Corollary 3]{Swiderski2017b} to the operator case.
In the scalar case it provides exact asymptotics of the so-called Christoffel functions,
which have applications, e.g. in random matrix theory (see \cite{Lubinsky2016}) 
or signal processing (see \cite{Ignjatovic2009}).
We believe that in the operator case it will also have some applications.
\begin{corollary}
	Let the assumptions of Theorem~\ref{thm:asym} be satisfied. Assume further that
	\[
		\sum_{k=0}^\infty \frac{1}{\norm{a_k}} = \infty.
	\]
	Then
	\[
		\lim_{n \rightarrow \infty} 
		\left[ \sum_{k=0}^n \frac{1}{\norm{a_k}}\right]^{-1} 
		\sum_{k=0}^n \sprod{C u_{k}}{u_{k}}_\calH
		= \frac{1}{2} g
	\]
	uniformly on $\Omega \times K$, where 
	\[
		g(\alpha, z) = \lim_{n \rightarrow \infty} S_n(\alpha, z),
	\]
	for $S_n$ defined in \eqref{eq:7}.
\end{corollary}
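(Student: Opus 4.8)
The plan is to reduce the statement to a weighted Cesàro (Stolz) averaging argument applied to the pointwise asymptotics already supplied by Theorem~\ref{thm:asym}. Write $v_k = \sprod{C u_k}{u_k}_\calH$ and $w_k = \norm{a_k} (v_{k-1} + v_k)$. Theorem~\ref{thm:asym} says precisely that $w_k \to g$ uniformly on $\Omega \times K$, and since $\norm{a_k^{-1}} \to 0$ (recall $T_n \equiv 0$) this equivalently records that $v_{k-1} + v_k = w_k \norm{a_k}^{-1} \to 0$. The whole proof is then a manipulation of the series $\sum_k v_k$ against the divergent series $\sum_k \norm{a_k}^{-1}$.

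First I would telescope. Summing $v_{k-1} + v_k$ over $1 \le k \le n$ gives
\[
	\sum_{k=1}^n (v_{k-1} + v_k) = 2 \sum_{k=0}^n v_k - v_0 - v_n,
\]
so that, writing $b_k = \norm{a_k}^{-1}$,
\[
	2 \sum_{k=0}^n v_k = \sum_{k=1}^n b_k w_k + v_0 + v_n.
\]
The strategy is to divide by $\sum_{k=0}^n b_k$, which diverges by hypothesis, and to show the right-hand side is asymptotically $g \sum_{k=0}^n b_k$.

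The core is a uniform weighted Cesàro lemma: if $b_k > 0$ with $\sum_k b_k = \infty$ and $w_k \to g$ uniformly on $\Omega \times K$, then
\[
	\frac{\sum_{k=1}^n b_k w_k}{\sum_{k=1}^n b_k} \longrightarrow g
\]
uniformly on $\Omega \times K$. I would prove this by the standard splitting: given $\varepsilon > 0$, the uniform convergence lets me fix an index $N$, \emph{independent of} $(\alpha, z)$, with $\abs{w_k - g} < \varepsilon$ for $k > N$; then $\bigl| \sum_{k=1}^n b_k(w_k - g) \bigr| / \sum_{k=1}^n b_k$ is bounded by $\bigl( \sum_{k \le N} b_k \abs{w_k - g} \bigr) / \sum_{k \le n} b_k + \varepsilon$, and the first summand tends to $0$ as $n \to \infty$ because its numerator is uniformly bounded on $\Omega \times K$ (continuity of $w_k$ in $(\alpha, z)$ together with compactness of $K$ and boundedness of $\Omega$) while the denominator diverges. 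Clearing the error terms is then routine: $v_0$ is uniformly bounded on $\Omega$, and $v_n \to 0$ uniformly since the two-sided estimate~\eqref{eq:26} of Theorem~\ref{thm:B} gives $\norm{u_n}^2 \le c\, \norm{a_n}^{-1} \to 0$, whence $\abs{v_n} \le \norm{C}\, \norm{u_n}^2 \to 0$; dividing $v_0 + v_n$ by the divergent sum $\sum_{k=0}^n b_k$ contributes a uniformly vanishing term. Finally, replacing $\sum_{k=1}^n b_k$ by $\sum_{k=0}^n b_k$ only changes the denominator by the constant $b_0$, negligible against a divergent sum. Combining these yields $\bigl( \sum_{k=0}^n b_k \bigr)^{-1} \cdot 2\sum_{k=0}^n v_k \to g$ uniformly, which is exactly the asserted limit with the factor $\tfrac12$.

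The main obstacle I anticipate is bookkeeping the uniformity throughout, rather than any single hard estimate: one must ensure the cutoff $N$ in the Cesàro splitting is chosen independently of $(\alpha, z)$ — which is precisely where the \emph{uniform} convergence from Theorem~\ref{thm:asym} is indispensable — and that the finite head $\sum_{k \le N} b_k \abs{w_k - g}$ is controlled uniformly on $\Omega \times K$. Once these two points are secured, every remaining step is elementary analysis of convergent and divergent series.
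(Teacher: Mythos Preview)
Your argument is correct and is essentially the paper's proof in expanded form: both reduce the statement to a Stolz--Ces\`aro/weighted averaging of the limit supplied by Theorem~\ref{thm:asym}. The paper simply invokes the Stolz--Ces\`aro theorem with step~$2$ to pass from $\bigl(\sum_{k\le n} v_k\bigr)\big/\bigl(\sum_{k\le n} b_k\bigr)$ to $(v_{n-1}+v_n)/(b_{n-1}+b_n)$, rewrites the latter as $\norm{a_n}(v_{n-1}+v_n)\big/(\norm{a_n}/\norm{a_{n-1}}+1)$, and then applies Theorem~\ref{thm:asym} to the numerator and Proposition~\ref{prop:6} (which under $R_n\equiv\Id$, $C_n\equiv C=C^*$ gives $\norm{a_n}/\norm{a_{n-1}}\to 1$) to the denominator. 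Your telescoping identity plus the weighted Ces\`aro lemma is a hands-on proof of the same averaging principle; one small bonus of your packaging is that by using the weights $b_k=\norm{a_k}^{-1}$ directly you never need the ratio condition $\norm{a_n}/\norm{a_{n-1}}\to 1$, and you make the uniformity in $(\alpha,z)$ explicit where the paper leaves it implicit in its one-line appeal to Stolz--Ces\`aro.
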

\begin{proof}
	By Stolz–Cesàro theorem (also known as L'Hôpital's rule for sequences)
	\begin{multline*}
		\lim_{n \rightarrow \infty} 
		\left[ \sum_{k=0}^n \frac{1}{\norm{a_k}}\right]^{-1} 
		\sum_{k=0}^n \sprod{C u_{k}}{u_{k}}_\calH
		=
		\lim_{n \rightarrow \infty}		
		\frac{\sprod{C u_{n-1}}{u_{n-1}}_\calH + \sprod{C u_{n}}{u_{n}}_\calH}
		{1/\norm{a_{n-1}} + 1/\norm{a_n}}
		 \\
		=
		\lim_{n \rightarrow \infty}		
		\frac{\norm{a_{n}}(\sprod{C u_{n-1}}{u_{n-1}}_\calH + \sprod{C u_{n}}{u_{n}}_\calH)}
		{\norm{a_{n}}/\norm{a_{n-1}} + 1}.
	\end{multline*}
	Theorem~\ref{thm:asym} implies that $C=C^*$, and consequently, Proposition~\ref{prop:6} shows that
	$\norm{a_n}/\norm{a_{n-1}}$ tends to $1$.
	Therefore, by Theorem~\ref{thm:asym} the result follows.
\end{proof}

\section{Examples} \label{sec:examples}
\subsection{Examples to Theorem~\ref{thm:A}}
In this section we show examples to the special cases of Theorem~\ref{thm:A} presented in 
Section~\ref{sec:specialCases}, i.e. to Theorems \ref{thm:spec:2} and \ref{thm:spec:3}. 
Since Theorem \ref{thm:spec:1} is a~weaker version of Theorem~\ref{thm:B}, the examples to it are 
postponed to the next section.

\begin{example}
	Assume that $X$ and $Y$ are bounded non-commuting operators on $\calH$ such that $X$ 
	is invertible normal and $Y$ is self-adjoint. Let
	\[
		\tilde{x}_k = k \sqrt{\log(k+1)}, \quad \tilde{y}_k = \frac{1}{k \log(k+1)}.
	\]
	Denote
	\[
		\tilde{x}^k = (\tilde{x}_k : 1 \leq j \leq k), \quad \tilde{y}^k = (\tilde{y}_k : 1 \leq j \leq k),
	\]
	i.e. the $k$th repetition of $\tilde{x}_k$ and $\tilde{y}_k$. We define in the block form
	\[
		x = (\tilde{x}^k : k \geq 1), \quad y = (\tilde{y}^k : k \geq 1).
	\]
	Then for
	\[
		a_n = x_n X, \quad b_n = y_n Y
	\]
	the assumptions of Theorem~\ref{thm:spec:2} are satisfied.
\end{example}
\begin{proof}
	We have
	\[
		a_{n+1} a_{n+1}^* - a_n^* a_n = x_{n+1}^2 X X^* - x_n^2 X^* X
	\]
	which by the monotonicity of $x_n$ and normality of $X$ is positive. Hence, 
	the hypothesis~\eqref{thm:spec:2:eq:1} is satisfied.
	
	Next, one has $\norm{a_n} = x_n \norm{X}$. Therefore, by
	\[
		\sum_{n=0}^\infty \frac{1}{x_n^2} = 
		\sum_{k=1}^\infty \frac{k}{\tilde{x}_k^2} =
		\sum_{k=1}^\infty \frac{1}{k \log(k+1)} = \infty
	\]
	we obtain the hypothesis~\eqref{thm:spec:2:eq:3}.
	
	Finally,
	\[
		\frac{\norm{a_n b_{n+1} - b_n a_n}}{x_n^2} \leq \frac{|y_{n+1} - y_n|}{x_n} \norm{X Y}
		+
		\frac{|y_n|}{x_n} \norm{X Y - Y X}
	\]
	and by the fact that $(x_{n+1}/x_n : n \geq 0)$ tends to $1$, the hypothesis~\eqref{thm:spec:2:eq:2} 
	is will be satisfied if $(y_n/ x_n : n \geq 0)$ is summable. But
	\[
		\sum_{n=0}^\infty \frac{y_n}{x_n} = 
		\sum_{k=1}^\infty k \frac{\tilde{y}_k}{\tilde{x}_k} =
		\sum_{k=1}^\infty \frac{1}{k [\log(k+1)]^{3/2}} < \infty
	\]
	and the result follows.
\end{proof}

\begin{example}
	Let $K \geq 1$ be an integer and $M$ be such that $\log^{(K)}(M) > 0$ (see \eqref{eq:30}).
	Assume that $X$ and $Y$ are bounded non-commuting self-adjoint operators on $\calH$ such that 
	$X$ is invertible. Let
	\[
		a_n = x_n X, \quad b_n = y_n Y,
	\]
	for
	\[
		x_n = (n+M) g_K(n+M), \quad y_n = \frac{1}{\log^{(K)}(n+M)}.
	\]
	Then the assumptions of Theorem~\ref{thm:spec:3} are satisfied.
\end{example}
\begin{proof}
	The hypotheses \eqref{thm:spec:3:eq:1} and \eqref{thm:spec:3:eq:4} from Theorem~\ref{thm:spec:3}
	are straightforward.
	
	Since $X$ is self-adjoint
	\[
		(a_{n-1}^*)^{-1} a_{n} = \frac{x_{n}}{x_{n-1}} \Id.
	\]
	Therefore, by \cite[Example 4.5]{Swiderski2016} the hypothesis \eqref{thm:spec:3:eq:2} 
	is satisfied.
	
	It remains to show the hypothesis~\eqref{thm:spec:3:eq:3}. We have
	\[
		a_n^{-1} b_n - b_{n+1} a_n^{-1} = 
		\frac{y_n - y_{n+1}}{x_n} X^{-1} Y + 
		\frac{y_{n+1}}{x_n} (X^{-1} Y - Y X^{-1}).
	\]
	Since $(y_{n+1}/x_n : n \geq 0)$ tends to $1$ it remains to show that $(y_n/x_n : n \geq 0)$ is summable.
	But
	\[
		\frac{y_n}{x_n} = \frac{1}{(n+M) g_{K-1}(n+M) [\log^{(K)}(n+M)]^2},
	\]
	which by the Cauchy condensation test applied $K$ times is summable. The proof is complete.
\end{proof}

\subsection{Examples to Theorems 2 and 3}
	The following Proposition provides a~simple way of the construction of sequences satisfying the 
	bounded variation condition of Theorem~\ref{thm:B}.
	\begin{proposition} \label{prop:7}
		Fix $N \geq 1$ and a~Hilbert space $\calH$.
		Let $(x_n : n \geq 0)$ and $(y_n : n \geq 0)$ be sequences of numbers such that $x_n > 0$,
		$b_n \in \RR$ and
		\[
			\calV_N \left( \frac{x_{n-1}}{x_n} : n \geq 1 \right) + 
			\calV_N \left( \frac{y_n}{x_n} : n \geq 0 \right) + 
			\calV_N \left( \frac{1}{x_n} : n \geq 0 \right) < \infty.
		\]
		Let $(X_n : n \in \ZZ)$ and $(Y_n : n \in \ZZ)$ be $N$-periodic sequences of bounded operators on
		$\calH$ such that for every $n$ each $X_n$ is invertible and each $Y_n$ is self-adjoint. 
		Let us define
		\[
			a_n = x_n X_n, \quad b_n = y_n Y_n.
		\]
		Then
		\[
			\calV_N(a_n^{-1} a_{n-1}^* : n \geq 1) +
			\calV_N(a_n^{-1} b_n : n \geq 0) + 
			\calV_N(a_n^{-1} : n \geq 0) < \infty.
		\]
	\end{proposition}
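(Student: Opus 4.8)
The plan is to reduce each of the three total variations to a scalar one by factoring out an $N$-periodic operator part and invoking the product rule of Proposition~\ref{prelim:normedVariation}. First I would record the factorizations
\[
a_n^{-1} = \frac{1}{x_n} X_n^{-1}, \qquad a_n^{-1} b_n = \frac{y_n}{x_n} X_n^{-1} Y_n, \qquad a_n^{-1} a_{n-1}^* = \frac{x_{n-1}}{x_n} X_n^{-1} X_{n-1}^*,
\]
which follow at once from $a_n = x_n X_n$, $b_n = y_n Y_n$ and the fact that the $x_n, y_n$ are scalars. In each case the operator factor---namely $X_n^{-1}$, $X_n^{-1} Y_n$, and $X_n^{-1} X_{n-1}^*$ respectively---is $N$-periodic, being built from the $N$-periodic sequences $(X_n)$ and $(Y_n)$, while the scalar factor is precisely one of the three sequences whose $N$-variation is assumed finite.

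The two elementary facts I would use are the following: an $N$-periodic sequence has vanishing total $N$-variation, since its terms repeat with period $N$; and a scalar sequence of finite total $N$-variation is bounded, because, as observed just after the definition of $\calV_N$, each of its subsequences $(\cdot_{kN+j} : k \geq 0)$ is Cauchy and hence convergent. The latter yields the boundedness of $1/x_n$, $y_n/x_n$ and $x_{n-1}/x_n$, while the $N$-periodic operator factors are automatically bounded, taking only finitely many values.

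With these in place the conclusion is immediate from Proposition~\ref{prelim:normedVariation} applied in the normed algebra $\calB(\calH)$, viewing each scalar $s_n$ as $s_n \Id$. For the first term,
\[
\calV_N\lrparen{\frac{1}{x_n} X_n^{-1}} \leq \sup_n \frac{1}{x_n}\, \calV_N(X_n^{-1}) + \sup_n \norm{X_n^{-1}}\, \calV_N\lrparen{\frac{1}{x_n}} = \sup_n \norm{X_n^{-1}}\, \calV_N\lrparen{\frac{1}{x_n}} < \infty,
\]
the first summand vanishing by periodicity; the estimates for $a_n^{-1} b_n$ and $a_n^{-1} a_{n-1}^*$ are identical in form, with $\norm{X_n^{-1} Y_n}$ and $\norm{X_n^{-1} X_{n-1}^*}$ in place of $\norm{X_n^{-1}}$. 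There is no genuine obstacle in this argument; the only point deserving attention is the boundedness of the three scalar factors, and this is exactly what the finiteness of their total $N$-variation supplies.
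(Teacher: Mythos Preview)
Your proof is correct and follows exactly the same approach as the paper's: factor each of the three sequences as a scalar times an $N$-periodic operator and apply Proposition~\ref{prelim:normedVariation}. The paper's version is terser---it writes down the three factorizations and then simply says ``it is enough to apply Proposition~\ref{prelim:normedVariation}''---while you have spelled out the two ingredients (vanishing $N$-variation of the periodic operator factor, boundedness of the scalar factor) that make the product rule go through.
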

	\begin{proof}
		We have
		\[
			a_n^{-1} a_{n-1}^* = \left( \frac{x_{n-1}}{x_n} \Id \right) \left( X_{n}^{-1} X^*_{n-1} \right), \quad
			a_{n}^{-1} b_n = \left( \frac{y_n}{x_n} \Id \right) \left( X^{-1}_n Y_n \right), \quad 
			a_n^{-1} = \left( \frac{1}{x_n} \Id \right) X_n^{-1}.
		\]
		Therefore, it is enough to apply Proposition~\ref{prelim:normedVariation}.
	\end{proof}
	
	The next Proposition provides a~convenient form of $\calF(\lambda)$ for $N=1$.
	\begin{proposition} \label{prop:2}
		Assume
		\begin{enumerate}[(a)]
			\item $\begin{aligned}
				\lim_{n \rightarrow \infty} \norm{a_n} = a \in (0, \infty]
			\end{aligned}$,		
		
			\item $\begin{aligned}
				\lim_{n \rightarrow \infty} \frac{a_n}{\norm{a_n}} = C
			\end{aligned}$,
			
			\item $\begin{aligned}
				\lim_{n \rightarrow \infty} \frac{b_n}{\norm{a_n}}= D
			\end{aligned}$,
			
			\item $\begin{aligned}
				\lim_{n \rightarrow \infty} \frac{\norm{a_{n-1}}}{\norm{a_n}}= 1
			\end{aligned}$.
		\end{enumerate}
		
		Then, in the notation of Theorem~\ref{thm:B}
		\[
			\calF(\lambda)
			=
			\begin{pmatrix}
				\sym{C} & \frac{1}{2} D - \frac{\lambda}{2a} \Id\\
				\frac{1}{2} D - \frac{\lambda}{2a} \Id & \sym{C}
			\end{pmatrix}.
		\]		
	\end{proposition}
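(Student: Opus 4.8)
The plan is to specialize the defining formula of $\calF(\lambda)$ from Theorem~\ref{thm:B} to $N=1$ and then substitute the explicit limits dictated by the hypotheses (a)--(d). For $N=1$ every $N$-periodic sequence is constant, so $C_{N-1}=C_0$ equals the single operator $C$ from assumption (b), and the product $\prod_{i=0}^{N-1}$ collapses to its single factor at $i=0$. Thus
\[
\calF(\lambda) = \sym{\begin{pmatrix} 0 & -C \\ C^* & 0 \end{pmatrix}\begin{pmatrix} 0 & \Id \\ -R & \lambda T - Q \end{pmatrix}},
\]
where $T,Q,R$ denote the (constant) limits from Theorem~\ref{thm:B}(a)--(c).

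First I would identify $T$, $Q$, and $R$ in terms of $C$, $D$, and $a$. The key manoeuvre is to factor out the scalar $\norm{a_n}$ and write $a_n^{-1}=\tfrac{1}{\norm{a_n}}\big(a_n/\norm{a_n}\big)^{-1}$. Invoking invertibility of $C$ (part of the standing notation of Theorem~\ref{thm:B}) gives $\big(a_n/\norm{a_n}\big)^{-1}\to C^{-1}$, whence (b)--(d) yield $T=\tfrac1a C^{-1}$, then $Q=C^{-1}D$ (from $a_n^{-1}b_n=(a_n/\norm{a_n})^{-1}(b_n/\norm{a_n})$), and $R=C^{-1}C^*$ (from $a_n^{-1}a_{n-1}^*=(a_n/\norm{a_n})^{-1}\tfrac{\norm{a_{n-1}}}{\norm{a_n}}\tfrac{a_{n-1}^*}{\norm{a_{n-1}}}$ together with $\norm{a_{n-1}}/\norm{a_n}\to1$). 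Throughout I would use the convention $1/a=0$ when $a=\infty$, so the finite and infinite cases are treated simultaneously.

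Next comes a direct $2\times2$ block multiplication: the product inside $\sym{\cdot}$ equals $\left(\begin{smallmatrix} CR & -C(\lambda T-Q) \\ 0 & C^*\end{smallmatrix}\right)$, and substituting the three identities collapses the entries to $CR=C^*$ and $-C(\lambda T-Q)=D-\tfrac{\lambda}{a}\Id$. Finally I would take the real part on $\calH\oplus\calH$, using that the block adjoint transposes and conjugates the blocks, that $D^*=D$ because each $b_n$ is self-adjoint and $\norm{a_n}>0$, and that $\lambda,a\in\RR$; the off-diagonal entries then become self-adjoint and symmetric, producing the asserted matrix with diagonal $\sym{C}$ and off-diagonal $\tfrac12 D-\tfrac{\lambda}{2a}\Id$.

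I do not expect a serious obstacle, since the argument is essentially bookkeeping. The only point requiring care is the passage $\big(a_n/\norm{a_n}\big)^{-1}\to C^{-1}$, which relies on invertibility of $C$ (guaranteed by the hypotheses of Theorem~\ref{thm:B}) and on continuity of operator inversion in the norm, together with the uniform handling of the possibly infinite limit $a$ via the convention $1/\infty=0$.
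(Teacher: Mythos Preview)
Your proposal is correct and follows essentially the same route as the paper: the paper also factors $a_n^{-1}b_n=(a_n/\norm{a_n})^{-1}(b_n/\norm{a_n})$ and $a_n^{-1}a_{n-1}^*=(a_n/\norm{a_n})^{-1}\tfrac{a_{n-1}^*}{\norm{a_{n-1}}}\tfrac{\norm{a_{n-1}}}{\norm{a_n}}$ to obtain $Q_0=C^{-1}D$ and $R_0=C^{-1}C^*$, and then appeals to ``direct computation'' for the form of $\calF(\lambda)$. Your write-up simply makes that direct computation explicit (including the identification $T=\tfrac1a C^{-1}$, which the paper leaves implicit).
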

	\begin{proof}
		Since
		\[
			a_n^{-1} b_n = \left( \frac{a_n}{\norm{a_n}} \right)^{-1} \frac{b_n}{\norm{a_n}}, \quad
			a_{n}^{-1} a_{n-1}^* = \left( \frac{a_n}{\norm{a_n}} \right)^{-1} 
			\frac{a_{n-1}^*}{\norm{a_{n-1}}} \frac{\norm{a_{n-1}}}{\norm{a_n}}
		\]
		we have
		\[
			Q_0 = C^{-1} D, \quad R_0 = C^{-1} C^*.
		\]
		Hence, the direct computation shows that $\calF(\lambda)$ has the requested form.
	\end{proof}

	In the following Example we discuss the optimality of $\Lambda$ in the case of constant coefficients.
	\begin{example}
		Let 
         \[
            a_n = 
            \begin{pmatrix}
               1 & 1\\
               1 & 2
            \end{pmatrix}, \quad
            b_n = 
            \begin{pmatrix}
               2 & 1\\
               1 & 1
            \end{pmatrix}.
         \]
		Then the assumptions of Theorem~\ref{thm:B} are satisfied with
		\[
			\Lambda = \left( \frac{-3+\sqrt{13}}{2}, \frac{9-\sqrt{37}}{2} \right) \supset [0.303, 1.458].
		\]
	 	Moreover, $\Lambda$ is the maximal set where $A$ has absolutely continuous spectrum of the 
	 	multiplicity~$2$.
	\end{example}
	\begin{proof}
		Let
		\[
			M_1 = \left( \frac{-3-\sqrt{13}}{2}, \frac{-3+\sqrt{13}}{2} \right) \cup 
			\left( \frac{9-\sqrt{37}}{2}, \frac{9+\sqrt{37}}{2} \right), \quad
			M_2 = \left( \frac{-3+\sqrt{13}}{2}, \frac{9-\sqrt{37}}{2} \right).
		\]
		Since $(a_n : n \geq 0)$ and $(b_n : n \geq 0)$ are constant it is sufficient to show that
		matrix $\calF(\lambda)$ is positive definite for $\lambda \in M_2$.
		
		According to Proposition~\ref{prop:2} we have
		\[
			\norm{a_n} \calF(\lambda) =
			\begin{pmatrix}
				1 & 1 & 1 -\frac{\lambda}{2} & \frac{1}{2} \\
				1 & 2 & \frac{1}{2} & \frac{1}{2} - \frac{\lambda}{2} \\
				1 -\frac{\lambda}{2} & \frac{1}{2} & 1 & 1 \\
				\frac{1}{2} & \frac{1}{2} - \frac{\lambda}{2} & 1 & 2
			\end{pmatrix}.
		\]
		The determinants of its principal minors are equal to
		\[
			1, \quad 1, \quad - \frac{1}{2} \lambda^2 + \frac{3}{2} \lambda -\frac{1}{4}, \quad
			\frac{1}{16} \lambda^4 -\frac{3}{8} \lambda^3 - \frac{17}{16} \lambda^2 
			+ \frac{21}{8} \lambda -\frac{11}{16}.
		\]
		Hence, the matrix $\calF(\lambda)$ is positively definite whether $\lambda \in M_2$. 
		Moreover, the determinant of the last minor is negative only for $\lambda \in M_1$.
		
		According to \cite[Theorem 3]{Zygmunt2001} the matrix $A$ is purely absolutely continuous on
		the closure of the set $M_1 \cup M_2$. Moreover, the spectrum of $A$ is of multiplicity $1$ and $2$ on 
		$M_1$ and $M_2$, respectively.
	\end{proof}
	
	In the next Example we consider the unbounded case for $N=1$.
	\begin{example}
		Let 
         \[
            X = 
            \begin{pmatrix}
               1 & 1\\
               1 & 2
            \end{pmatrix}, \quad
            Y = 
            \begin{pmatrix}
               2 & 1\\
               1 & 1
            \end{pmatrix}.
         \]
		Let us assume that real sequences $(x_n: n \geq 0)$ and $(y_n: n \geq 0)$ such that 
		$x_n > 0$ and $y_n \in \RR$ for every $n$ satisfy
		\[
			\calV_1 \left( \frac{x_{n-1}}{x_n} : n \geq 1 \right) + 
			\calV_1 \left( \frac{y_n}{x_n} : n \geq 0 \right) + 
			\calV_1 \left( \frac{1}{x_n} : n \geq 0 \right) < \infty
		\]
		and
		\[
			\lim_{n \rightarrow \infty} x_n = \infty, \quad 
			\lim_{n \rightarrow \infty} \frac{x_{n-1}}{x_n} = 1, \quad
			\lim_{n \rightarrow \infty} \frac{y_n}{x_n} = q \in (\sqrt{5}-3, 3-\sqrt{5}).
		\]
		For example: $x_n = (n+1)^\alpha, y_n = q a_n$ for $\alpha > 0$.
		
		Then for
		\[
			a_n = x_n X, \quad b_n = y_n Y
		\]
		the assumptions of Theorem~\ref{thm:B} are satisfied.
	\end{example}
	\begin{proof}
		In view of Proposition~\ref{prop:7} it is enough to show that $\calF$ is positive definite.
		In the notation of Proposition~\ref{prop:2}
		\[
			C = \frac{1}{\norm{X}} X, \quad D = \frac{q}{\norm{X}} Y, \quad a = \infty.
		\]
		Hence, by Proposition~\ref{prop:2}
		\[
			\norm{X} \cdot \calF(\lambda) =
			\begin{pmatrix}
				1 & 1 & q & q/2 \\
				1 & 2 & q/2 & q/2 \\
				q & q/2 & 1 & 1 \\
				q/2 & q/2 & 1 & 2
			\end{pmatrix}.
		\]
		The determinants of the principal minors of this matrix are equal to
		\[
			1, \quad 1, \quad -\frac{5}{4} q^2 + 1, \quad \frac{1}{16} q^4 - \frac{7}{4} q^2 + 1.
		\]
		Hence, this matrix is positive definite if and only if
		\[
			q \in (\sqrt{5}-3, 3-\sqrt{5}) \supset [-0.763, 0.763].
		\]
	\end{proof}

\section*{Acknowledgements}
I would like to thank Bartosz Trojan, Ryszard Szwarc and an anonymous referee for some helpful suggestions.

	\bibliographystyle{abbrv} 
	\bibliography{matrix}
\end{document}